\newcommand{\2}{\sqrt{2}}
\newcommand{\f}{\frac}
\newcommand{\n}{\f{1}{n}}
\newcommand{\N}{\mathbb{N}}
\newcommand{\R}{\mathbb{R}}
\newcommand{\bi}{\mathbbm{1}}
\definecolor{purple}{RGB}{128,0,128}
\def\qed{\vbox{\hrule\hbox{\vrule\kern3pt\vbox{\kern6pt}\kern3pt\vrule}\hrule}}
\def\n{\noindent}
\newtheorem{thm}{Theorem}[section]
\newtheorem{lm}[thm]{Lemma}
\newtheorem{prop}[thm]{Proposition}
\newtheorem{cm}[thm]{Claim}
\newtheorem{conj}[thm]{Conjecture}
\newtheorem*{thm*}{Theorem}
\newtheorem*{cor*}{Corollary}
\newtheorem*{lm*}{Lemma}
\newtheorem*{cm*}{Claim}
\newtheorem*{prop*}{Proposition}
\theoremstyle{definition}
\newtheoremstyle%
 {Aside}%
 {}{}%
 {\color{purple}\itshape}
 {}%
 {\color{purple}\bfseries}%
 {\color{purple}.}%
 { }{}
\theoremstyle{Aside}
\title{The Circlet Inequalities: A New, Circulant-Based Facet-Defining Inequality for the TSP}
\author{Samuel C. Gutekunst and David P. Williamson}
\date{}
\begin{document}
\maketitle

\begin{abstract}
Facet-defining inequalities of the symmetric Traveling Salesman Problem (TSP) polytope play a prominent role in both polyhedral TSP research and  state-of-the-art TSP solvers.   In this paper, we introduce a new class of facet-defining inequalities, the \emph{circlet inequalities}.  These inequalities were first conjectured in Gutekunst and Williamson \cite{Gut19b} when studying Circulant TSP, and they provide a bridge between polyhedral TSP research and number-theoretic investigations of Hamiltonian cycles stemming from a conjecture due to Marco Buratti in 2017.   The circlet inequalities exhibit circulant symmetry by placing the same weight on all edges of a given length; our main proof exploits this symmetry to prove the validity of the circlet inequalities.  We then show that the circlet inequalities are facet-defining and compute their strength following  Goemans \cite{Goe95b}; they achieve the same worst-case strength as the similarly circulant crown inequalities of Naddef and Rinaldi \cite{Nad92}, but are generally  stronger.  
\end{abstract}

\section{Introduction and Outline}

The symmetric Traveling Salesman Problem (TSP) is a fundamental problem in combinatorial optimization, combinatorics, and theoretical computer science.  An instance consists of a set $[n]:=\{1, 2, 3, ..., n\}$ of $n$ cities and, for each pair of distinct cities $i, j \in [n]$, an associated cost or distance $c_{ij}=c_{ji}\geq 0$ reflecting the cost or distance of traveling from city $i$ to city $j$. The TSP is then to find a minimum-cost tour visiting each city exactly once.  Treating the cities as vertices of the complete, undirected graph $K_n,$ and treating an edge $\{i, j\}$ of $K_n$ as having cost $c_{ij}$, the TSP is equivalently to find a minimum-cost Hamiltonian cycle on $K_n.$

Let $STSP(n)$ denote the \emph{symmetric Traveling Salesman Problem polytope} on $n$ cities: the convex hull of the edge-incidence vectors of Hamiltonian cycles on $K_n$.  That is, $$STSP(n)=\text{conv}\{\chi_H : H \text{ is a Hamiltonian cycle on } K_n\}\subset \R^{|E|}.$$ A substantial theme of TSP research has been polyhedral, with a particular emphasis on \emph{facet-defining inequalities}.  These inequalities have played a fundamental role in developing TSP algorithms: despite the TSP being a fundamental NP-hard problem\footnote{The TSP with general symmetric edge costs is even hard to approximate within any constant factor $\alpha$: an algorithm that could find a Hamiltonian cycle in polynomial time, with that cycle guaranteed to cost no more than $\alpha$ times the (unknown) cost of the optimal solution would imply P=NP (for any constant alpha; see, e.g., Theorem 2.9 in Williamson and Shmoys \cite{DDBook}).  Even with more restrictive assumptions, such as that the edge costs are {\bf metric} (i.e. $c_{ij}\leq c_{ik}+c_{kj}$ for all distinct $i, j, k\in [n]$), it is known to be NP-hard to approximate TSP solutions in polynomial time to within any constant factor  $\alpha<\f{123}{122}$ (see Karpinski,  Lampis,  and Schmied \cite{Karp15}).}, the state-of-the-art TSP solver Concorde \cite{App06} has been able  to successfully solve structured instances with nearly 100,000 vertices! (See, e.g., Applegate, Bixby, Chvatal, and Cook \cite{App09}, which certifies the optimality of pla85900 from TSPLIB \cite{re91}.)

Formally, an inequality  $a x \geq a_0$  is \emph{valid} for $STSP(n)$ if every $x\in STSP(n)$ satisfies $a x \geq a_0.$    \emph{Facet-defining inequalities} are valid inequalities that define a \emph{facet} of $STSP(n)$ (i.e. letting $S=\{x\in STSP(n): ax = a_0\}$, the inequality $ax\geq a_0$ is facet defining when $dim(S)=dim(STSP(n))-1$, so that the inequality induces a facet of $STSP(n)$).   A nice survey of the prominent role of facet-defining inequalities in TSP research and computation can be found in Chapter 5 of  Applegate, Bixby, Chvatal, and Cook \cite{App06b}.  See also Gr{\"o}tschel and Padberg \cite{Gro86} and Naddef \cite{Nad06}.  Goemans \cite{Goe95b} surveys many facet defining inequalities for the TSP and provides a way to evaluate the strength of such inequalities.    Well-known inequalities for the TSP include the clique-tree inequalities (Gr{\"o}tschel and Pulleyblank \cite{Gro86b}), the comb inequalities (Chv{\'a}tal \cite{Chv73}, Gr{\"o}tschel and Padberg \cite{Gro79}), the crown inequalities (Naddef and Rinaldi \cite{Nad92}), the path inequalities (Cornu{\'e}jols,  Fonlupt,  and Naddef \cite{Cor85}), the path-tree inequalities (Naddef and Rinaldi \cite{Nad91}), the binested inequalities (Naddef \cite{Nad92b}), and the rank inequalities (Gr{\"o}tschel \cite{Gro80}).

The main result of this paper is a new facet-defining inequality arising from the \emph{Circulant TSP}, a special case of the TSP for which relatively little is known.  Circulant TSP instances are those whose edge costs can be described by a symmetric, \emph{circulant matrix}, a condition which imposes substantial symmetry: the cost of edge $\{i, j\}$ only depends on $(i-j)$ mod $n$.  The cost of an edge, then, depends only on its \emph{length}  $$\ell_{i, j}=\min\{|i-j|, n-|i-j|\}.$$   In analogue to a Circulant TSP instance, we define a \emph{circulant} inequality for the TSP as a valid inequality $a x \geq a_0$ where the coefficients of $a$ are circulant (i.e. $a_{i, j}=a_{i', j'}$ whenever $\ell_{i, j} = \ell_{i', j'}$).  Of the well-known facet-defining inequalities for $STSP(n),$ the crown Inequalities of Naddef and Rinaldi \cite{Nad92} are circulant.  

For a vector $x\in \R^{|E|}$, let $$t_i = \sum_{\{s, t\}\in E: \ell_{s, t} = i} x_{s, t}$$ denote the total weight of edges of length $i$.  A circulant inequality $a x\geq a_0$ can be rewritten as \begin{equation} \label{eq:circform} \sum_{i=1}^d c_i t_i \geq a_0, \end{equation} where $d = \lfloor\f{n}{2}\rfloor$ and $c_i = a_{0, i}$ is the cost of any edge of length $i$.

Notice that any valid inequality of the form  Equation (\ref{eq:circform}) expresses some requirement about edge lengths in a valid tour.  For example, suppose that $n$ is divisible by 4.  Then $t_{4} \leq n-4$ is a simple inequality stating that you cannot use more than $n-4$ edges of length 4:  edges of length 4 decompose the graph of $K_n$ into four distinct cycles, and a valid Hamiltonian cycle must use at most $n/4-1$ edges from any of these cycles.  Circulant inequalities thus offer a way to bridge together polyhedral investigations of the TSP in combinatorial optimization with related questions in number theory.   For example, the following conjecture dates to Marco Buratti in 2007  and conjectures conditions for a Hamiltonian path using prescribed edge lengths (see Buratti and Merola \cite{Bur13} for an initial statement, Horak and Rosa \cite{Hor09} for a generalization, and Pasotti and Pellegrini \cite{Pas14} for a rephrasal). 

\begin{conj}[Buratti]
	Let $L$ be a multiset of size $n-1$ consisting of edge lengths in $1, 2, ...., \lfloor \f{n}{2} \rfloor.$  There exists a Hamiltonian path in $K_n$ using edge lengths $L$ if and only if: for every $q$ that divides $n$, $$\#\{e\in L: q|e\} \leq n-q.$$  
\end{conj}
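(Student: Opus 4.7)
The necessary (``only if'') direction is a coset argument. Suppose $H$ is a Hamiltonian path on $[n]$ realizing the edge-length multiset $L$, and fix $q \mid n$. Any edge $\{i,j\}$ whose length is divisible by $q$ satisfies $i \equiv j \pmod q$, so the edges of $H$ of length divisible by $q$ partition into $q$ subgraphs, one per residue class of $\Z_n / q\Z_n$. Each residue class has $n/q$ vertices, and the restriction of a path to any vertex subset is a forest, hence contains at most $n/q - 1$ edges. Summing across the $q$ classes gives at most $q(n/q - 1) = n - q$ edges of length divisible by $q$, as required.

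For the sufficient (``if'') direction, I would attempt an inductive construction on $n$. Given a valid multiset $L$ satisfying the divisibility inequality for every $q \mid n$, the plan is to (i) identify a length $\ell \in L$ that can be safely placed as an end-edge of the eventual path, (ii) contract its two endpoints into a single vertex to reduce to an instance on $n-1$ vertices with length multiset $L \setminus \{\ell\}$, and (iii) verify that the reduced multiset satisfies the Buratti conditions relative to the new modulus $n-1$. The circulant machinery developed in this paper suggests a complementary decomposition strategy: realize $L$ as a non-negative combination of circulant tours of prescribed step sizes and argue that the divisibility inequalities guarantee the existence of such a combination, in the spirit of rewriting circulant inequalities in the form (\ref{eq:circform}).

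The main obstacle is that this is a famous open problem, and the two approaches above both founder on the same difficulty: there is a mismatch between divisors of $n$ (which constrain $L$) and divisors of $n-1$ (which constrain the reduced instance), so the inductive invariants do not propagate cleanly. Known partial results---e.g., when $n$ is prime, when $L$ has few distinct values, or when the largest length is small---correspond precisely to cases where this mismatch simplifies. I would not expect a complete proof without substantially stronger tools than those developed here; the contribution of the circulant-inequality framework is, rather, to provide a \emph{new} family of necessary conditions on $L$ beyond the divisor inequalities, potentially suggesting a sharpening of Buratti's conjecture that is more tractable.
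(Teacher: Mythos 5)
There is nothing in the paper to compare your proposal against: the statement is presented as an open conjecture attributed to Buratti, cited from Buratti and Merola \cite{Bur13}, Horak and Rosa \cite{Hor09}, and Pasotti and Pellegrini \cite{Pas14}, and the authors offer no proof of it. They use it only as motivation, noting that any resolved analogue for Hamiltonian cycles would yield circulant inequalities of the form (\ref{eq:circform}). So the task was, in effect, to recognize an open problem rather than to reproduce an argument.

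Your treatment of the ``only if'' direction is correct and is the standard one: for $q \mid n$, an edge $\{i,j\}$ with $q \mid \ell_{i,j}$ forces $i \equiv j \pmod q$ (since $q \mid n$ makes $q \mid |i-j|$ and $q \mid (n-|i-j|)$ equivalent), the restriction of a path to each of the $q$ residue classes is a forest on $n/q$ vertices with at most $n/q - 1$ edges, and summing gives $n - q$. Your ``if'' direction is, as you say yourself, not a proof; the inductive scheme stalls exactly where you identify --- the divisor structure of $n$ does not transfer to $n-1$ after contracting an end-edge, so the Buratti inequalities are not a self-propagating invariant. That honest assessment is the right conclusion here. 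The only correction I would make is to your framing: the circlet inequality of this paper is a necessary condition on Hamiltonian \emph{cycles} (equivalently on $EL(n)$), not on Hamiltonian paths, so it supplements rather than refines Buratti's path conditions; the paper's own open question along these lines is the cycle analogue raised by Costa, Morini, Pasotti, and Pellegrini \cite{cost80}.
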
 
\noindent Here $\{e\in L: q|e\}$ is taken as a multiset consisting of all edge lengths in $L$ that are a multiple of $q$.  In the case where $n=8$, for example, this condition says that $L$ must contain at most $8-2=6$ edges of even length.  Any analogue of Buratti's condition for Hamiltonian paths would give rise to a circulant inequality for the TSP.  Costa, Morini, Pasotti, and Pellegrini \cite{cost80}, e.g., explicitly leave as open finding necessary and sufficient conditions for a graph to have a Hamiltonian cycle using prescribed edge lengths.

The circulant facet-defining inequality we propose (the \emph{circlet inequality}) here takes the following form:  Suppose that  $n$ is divisible by 4 and let $d=\f{n}{2}.$ Then for any $x\in\R^{|E|}$ that is a feasible TSP input, \begin{equation}\label{eq:main}\sum_{i=1}^d c_i t_i \geq n-2, \hspace{10mm} c_i = \begin{cases} i, & i\text{ odd} \\ d-i, & i \text{ even}.\end{cases}\end{equation}
Note that this inequality has an ``alternating'' structure based on parity: for $i$ odd, the coefficient of $t_i$ grows with $i$, while for $i$ even, the coefficient of $t_i$ decreases with $i.$  When $n=12,$ e.g., the inequality is $$t_1+4t_2 +3t_3 + 2t_4 + 5 t_5 + 0t_6 \geq 10.$$

Note that this inequality is valid for any arbitrary labeling of the vertices.  Hence, for any TSP instance, there are on the order of $n!$ possible versions that can be applied.  Consider a point $x\in \R^{|E|}$ that may or may not be in $STSP(n)$.  One can arbitrarily relabel the vertices with any permutation of $1, ..., n$, determine the `edge lengths' and $t_i$ values based on that new labeling, and apply Inequality (\ref{eq:main}).  If $x$ is not feasible for Inequality (\ref{eq:main}) under that relabeling, then $x$ is not in $STSP(n)$.  

In addition to the number theoretic connections and interpretations of the circlet inequalities, we highlight three additional properties of it.  First, Inequality (\ref{eq:main}) was first conjectured in Gutekunst and Williamson \cite{Gut19b}, and motivated by Circulant TSP.  Gutekunst and Williamson \cite{Gut19b} specifically characterized the \emph{integrality gap} of the prototypical LP relaxation of the TSP, the subtour LP.  Specifically, Gutekunst and Williamson showed that the worst case ratio of the subtour LP relative to the TSP is exactly 2 on Circulant TSP instances.  Gutekunst and Williamson consider several avenues to improving the integrality gap on Circulant TSP instances.  They noted that many facet-defining inequalities eliminate the specific solution used to show an integrality gap of 2, including the ladder, chain, and crown inequalities (see Boyd and Cunningham \cite{Boyd91}, Padberg and Hong \cite{Pad80}, and Naddef and Rinaldi \cite{Nad92}). However, none of these inequalities were robust to a small modification of the specific solution used.  Gutekunst and Williamson conjectured  Inequality (\ref{eq:main}) and noted that -- if valid -- adding Inequality (\ref{eq:main}) would robustly eliminate the specific solution; see Section \ref{sec:back} for more details.  Thus this paper resolves the Circulant TSP conjecture of Gutekunst and Williamson \cite{Gut19b}.

Second, Inequality  (\ref{eq:main}) can itself be considered as defining a circulant, non-metric TSP instance. One places a cost $c_i$ on each edge of length $i$ and verifies if the minimum cost solution to that TSP instance costs at least $n-2$; see Figure \ref{fig:sym} for an example of the symmetry of such an instance.  Indeed, when investigating this TSP inequality, we first experimentally tried to verify its validity using Concorde \cite{App06}.  Our instances presents potential computational novelty: despite having solved instances with nearly 100,000 vertices, Concorde struggled to verify the circlet inequality on even tiny instances\footnote{Bill Cook, one of the authors of Concorde, generously ran our instances on Concorde and it took over 40 hours to verify the circlet inequality when $n=32$. He noted that he sometimes found difficult small instances for Concorde to solve, ``but [that] 32 nodes might be a record."}. 

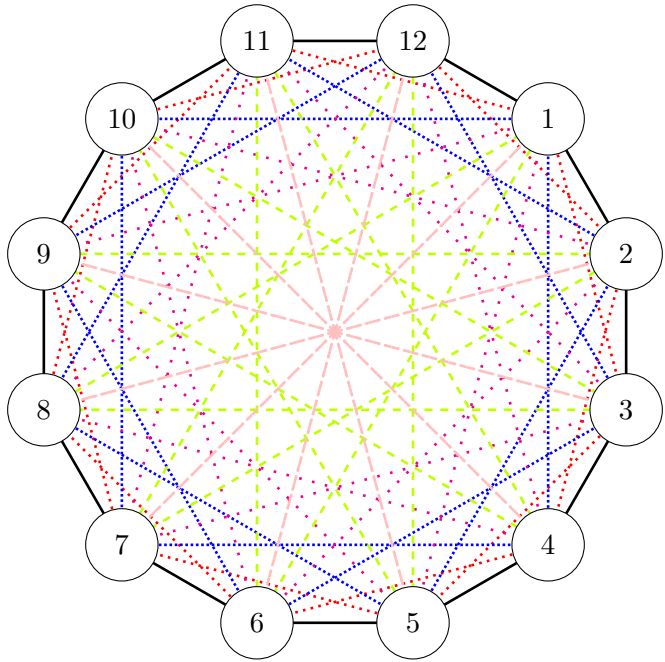
\begin{figure}[h!]
	\begin{center}
\begin{tikzpicture}
\tikzset{vertex/.style = {shape=circle,draw,minimum size=2.5em}}
\tikzset{edge/.style = {->,> = latex'}}
\node[draw=none,minimum size=8cm,regular polygon,regular polygon sides=12] (a) {};

\foreach [evaluate ={\j=int(mod(\i, 12)+1)}] \i in {1,2, 3, ..., 12}
\draw[line width=1pt] (a.corner \i) -- (a.corner \j);

\foreach [evaluate ={\j=int(mod(\i+1, 12)+1)}] \i in {1,2, 3, ..., 12}
\draw[dotted, line width=1pt, red] (a.corner \i) -- (a.corner \j);

\foreach [evaluate ={\j=int(mod(\i+2, 12)+1)}] \i in {1,2, 3, ..., 12}
\draw[densely dotted, line width=1pt, blue] (a.corner \i) -- (a.corner \j);

\foreach [evaluate ={\j=int(mod(\i+3, 12)+1)}] \i in {1,2, 3, ..., 12}
\draw[loosely dotted, line width=1pt, magenta] (a.corner \i) -- (a.corner \j);

\foreach [evaluate ={\j=int(mod(\i+4, 12)+1)}] \i in {1,2, 3, ..., 12}
\draw[dashed, line width=1pt, lime] (a.corner \i) -- (a.corner \j);

\foreach [evaluate ={\j=int(mod(\i+5, 12)+1)}] \i in {1,2, 3, ..., 12}
\draw[dashdotted, line width=1pt, pink] (a.corner \i) -- (a.corner \j);

\foreach \n [count=\nu from 1, remember=\n as \lastn, evaluate={\nu+\lastn}] in {12, 11, ..., 1} 
\node[vertex, fill=white]at(a.corner \n){$\nu$};

\end{tikzpicture}

	\end{center}
	\caption{Circulant symmetry.  Edges of a fixed length are indistinguishable.  E.g. all edges of the form $\{v, v+1\}$ (where $v+1$ is taken mod $n$) have the same appearance. }\label{fig:sym}\end{figure}

Finally, we hope  that this paper motivates a search for other circulant facet-defining inequalities.  Such inequalities are intimately connected to number theory, and may provide a new approach to the TSP: decades of research have still not resolved many questions about $STSP(n)$, and one might wonder if a more number-theoretic take -- an understanding of the combinations of edge lengths that can constitute a Hamiltonian cycle -- might provide new insights.  We specifically consider the projection of $STSP(n)$ to the variables $t_1, ..., t_d.$  Let $$EL(n):= \text{conv}\{(t_1, ..., t_d): x\in STSP(n), t_i = \sum_{\{s, t\}\in E: \ell_{s, t} = i} x_{s, t}\}$$ denote the edge-length TSP polytope.  One might ask if it is possible to characterize $EL(n)$, and if valid inequalities for $EL(n)$ are useful in solving  TSP instances.   

\subsection{Outline}

We begin by providing brief background on Circulant TSP and context for our results  in Section \ref{sec:back}.  Then we are able to present the two main theorems in this paper: Theorem \ref{thm:main}, which proves that Inequality (\ref{eq:main}) is valid, and Theorem \ref{thm:tight}, which proves that it is facet-defining.

Sections \ref{sec:mainpf} and \ref{sec:lem} present the proof of Theorem \ref{thm:main}, which uses two  lemmas which provide contrasting conditions on potential counter-examples to Inequality (\ref{eq:main}).  Recall that this equation is of the form $\sum_{i=1}^d c_i t_i \geq n-2$ where $c_i, t_i \geq 0$ and $\sum_{i=1}^d t_i = n.$  On one hand,  $c_1 = 1, c_d =0$ and $c_i\geq 2$ otherwise.  Hence, any possible  counter-example to Inequality (\ref{eq:main}) requires $t_1$ and $t_d$ to be large: for $\sum_{i=1}^d c_i t_i <n-2$, the cost of an average edge must be strictly less than 1, and all other edges of length $i\not\in\{1, d\}$ cost at least twice that.  We formalize this observation in our first lemma,   Lemma \ref{lm1}.

On the other hand,  our more  technical   lemma, Lemma \ref{lm2}, argues that a minimal counterexample cannot be ``dense'' in edges of length 1 and $d$: any ``window'' of four vertices $v, v+1, v+d, v+d+1$ can include at most 2 such edges.  By arguing that the conditions of Lemmas \ref{lm1} and \ref{lm2} are mutually incompatible, we can quickly deduce our main result.  We do so in Section \ref{sec:mainpf}, where we provide the proof up to Lemma \ref{lm2}. Then, in Section \ref{sec:lem}, we prove Lemma \ref{lm2}.  Lemma \ref{lm2} is considerably more involved than Lemma \ref{lm1}, and to prove it, we carefully consider what happens when we contract any window using 3 edges of length 1 and $d$.  Doing so involves careful bookkeeping on how edge costs change under contraction, in addition to combinatorial observations about the different types of edges a tour can contain.

In Section \ref{sec:facet}, we turn to our second main theorem.  By presenting a relatively small set of tours and exploiting symmetry, we can quickly show that Inequality (\ref{eq:main}) is tight.  Finally, in Section \ref{sec:Strength}, we turn towards analyzing the strength of Inequality (\ref{eq:main}).  We compute its strength following Goemans \cite{Goe95b}, and we show that the strength of Inequality (\ref{eq:main}) is $$\f{n^2-2n-4}{n^2-3n}\leq \f{11}{10}.$$  It is equal to $\f{11}{10}$ when $n=8.$   For comparison, we note that the bound of $\f{11}{10}$ is also attained when $n=8$ by the crown inequality; otherwise ours is marginally stronger than the crown inequality (Naddef and Rinaldi \cite{Nad92}).

\section{Background}\label{sec:back}

\subsection{Circulant TSP}

As noted above, Circulant TSP instances are those whose edge costs can be described by a symmetric, \emph{circulant matrix}.  Since the cost of edge $\{i, j\}$ only depends on $(i-j)$ mod $n$, the cost matrix is in terms of $ \lfloor \frac{n}{2}\rfloor$ parameters: \begin{equation}\label{eq:ScircMat} C=(c_{(j-i) \text{ mod } n})_{i, j=1}^n=\begin{pmatrix} 0 & c_1 & c_2 & c_3 & \cdots & c_{1} \\ c_{1} & 0 & c_1 & c_2 & \cdots & c_2 \\ c_{2} & c_{1} &0 & c_1 & \ddots & c_{3} \\ \vdots & \vdots & \vdots & \vdots & \ddots & \vdots \\ c_1 & c_2 & c_3 & c_4 & \cdots & 0\end{pmatrix},\end{equation} with $c_0=0$ and  $c_i=c_{n-i}$ for $i=1, ..., \lfloor \frac{n}{2}\rfloor.$    Importantly, in Circulant TSP there is not necessarily an assumption that the edge costs are metric.

Circulant TSP initially arose from questions of minimizing wallpaper waste in Garfinkel \cite{Gar77} and reconfigurable network design in Medova \cite{Med93}. One of the reasons that Circulant TSP has remained so compelling is that circulant instances seem to provide just enough structure to make an ambiguous set of instances: it is unclear whether or not a given combinatorial optimization problem should remain hard or become easy when restricted to circulant instances.  Some classic combinatorial optimization problems become easy when restricted to circulant instances. In the late 80's, Burkard and Sandholzer \cite{Burk91} showed that the decidability question for whether or not a symmetric circulant graph is Hamiltonian can be solved in polynomial time and showed that  bottleneck TSP is polynomial-time solvable on symmetric circulant graphs.  Bach, Luby, and Goldwasser (cited in Gilmore, Lawler, and Shmoys \cite{Gil85}) showed that one could find minimum-cost Hamiltonian paths in (not-necessarily-symmetric) circulant graphs in polynomial time.  In contrast, Codenotti, Gerace, and Vigna \cite{Code98} showed that Max Clique and Graph Coloring remain NP-hard when restricted to circulant graphs and do not admit constant-factor approximation algorithms unless P=NP.

Gutekunst and Williamson \cite{Gut19b} analyze the  prototypical LP relaxation of $STSP(n)$ on Circulant TSP instances.  This LP relaxation is the \emph{subtour elimination linear program} (also referred to as the Dantzig-Fulkerson-Johnson relaxation \cite{Dan54} and the Held-Karp bound \cite{Held70}, and which we will refer to as the {\bf subtour LP} and whose feasible region we will abbreviate as $SP(n)$).  The subtour LP has a variable $x_e$ associated to each edge: 
\begin{equation}\label{eq:stlp}
\begin{array}{l l l}
\min & \sum_{e\in E} c_e x_e & \\
\text{subject to} & \sum_{e\in \delta(v)} x_e = 2, & v=1, \ldots, n \\
& \sum_{e\in \delta(S)} x_e \geq 2, & S\subset V: S\neq \emptyset, S\neq V \\
&0\leq x_e \leq 1, & e=1, \ldots, n.
\end{array} \end{equation}
Given a Hamiltonian cycle $\mathcal{C}$, there is a  feasible solution to the subtour LP attained by setting $x_e=1$ for each $e\in \mathcal{C}$ and $x_e=0$ otherwise.  When edge costs are metric, Wolsey \cite{Wol80},  Cunningham \cite{Cun86}, and Shmoys and Williamson \cite{Shm90} show that solutions to this linear program are within a factor of $\f{3}{2}$ of the optimal, integer solution to the TSP. 
\begin{thm}[Wolsey \cite{Wol80},  Cunningham \cite{Cun86}, and Shmoys and Williamson \cite{Shm90}]
	The integrality gap of the subtour LP on metric TSP instances is at most $\f{3}{2}.$  That is, for any input to the TSP with metric edge costs, the ratio $$\f{\text{Cost of Optimal TSP solution}}{\text{Cost of Optimal LP Solution}} \leq \f{3}{2}.$$
\end{thm}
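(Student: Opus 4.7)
The plan is to prove the bound through an LP-based analysis of Christofides' algorithm. Fix an optimal subtour LP solution $x^*$ of cost $OPT_{LP}$. I will bound the two pieces of a Christofides-style tour --- a minimum spanning tree (MST), plus a minimum-weight perfect matching on its odd-degree vertices --- separately against the LP value, and then combine the bounds.

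First, I would show that $\f{n-1}{n} x^*$ lies in the spanning tree polytope of $K_n$. The degree constraints force $\sum_{e} x^*_e = n$, so the scaled vector sums to $n-1$; and since the subtour constraint $x^*(\delta(S)) \geq 2$ rewrites via $x^*(E(S)) = |S| - \f{1}{2} x^*(\delta(S))$ as $x^*(E(S)) \leq |S|-1$, the scaled vector satisfies $\f{n-1}{n} x^*(E(S)) \leq |S|-1$ for every nonempty $S \subsetneq V$. Since the spanning tree polytope is integral, an MST has cost at most $\f{n-1}{n} OPT_{LP}$. Second, for any even-sized set $T \subseteq V$, the scaled vector $\f{1}{2} x^*$ satisfies $\f{1}{2} x^*(\delta(S)) \geq 1$ for every proper nonempty cut, in particular for every $T$-cut. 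Edmonds' LP characterization of the $T$-join polyhedron then yields a $T$-join of cost at most $\f{1}{2} OPT_{LP}$; taking $T$ to be the odd-degree vertices of the MST, a minimum perfect matching on $T$ also costs at most $\f{1}{2} OPT_{LP}$ (a $T$-join on an even set contains a perfect matching on $T$ after dropping cycles, and this uses metricity only through the standard shortcutting step below).

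Combining the MST with such a matching produces an Eulerian multigraph; an Eulerian tour in it can be shortcut to a Hamiltonian cycle without increasing cost because the edge weights are metric. The resulting Hamiltonian cycle has cost at most $\f{n-1}{n} OPT_{LP} + \f{1}{2} OPT_{LP} \leq \f{3}{2} OPT_{LP}$, so $OPT_{TSP} \leq \f{3}{2} OPT_{LP}$ as claimed. The main conceptual step is recognizing that appropriately scaled copies of $x^*$ are simultaneously feasible for the spanning tree polytope and the $T$-join polyhedron; once this pair of LP-projection observations is in hand, the Christofides analysis goes through almost verbatim and the $\f{3}{2}$ factor emerges from the arithmetic $\f{n-1}{n} + \f{1}{2} \leq \f{3}{2}$.
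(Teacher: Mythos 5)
Your proposal is correct: the paper itself gives no proof of this background theorem (it only cites Wolsey, Cunningham, and Shmoys--Williamson), and your argument is essentially the standard one from those references --- bounding the MST against $\f{n-1}{n}x^*$ via the spanning tree polytope and the odd-vertex matching against $\f{1}{2}x^*$ via the $T$-join polyhedron, then shortcutting the Eulerian multigraph using metricity. No gaps; the scaling observations and the arithmetic $\f{n-1}{n}+\f{1}{2}\leq\f{3}{2}$ are exactly as in the cited proofs.
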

\noindent It is conjectured that the integrality gap of the subtour LP on metric TSP instances is at most $\f{4}{3},$ and one motivation for this conjecture stems from the definition of strength in Goemans \cite{Goe95b}.  The $\f{3}{2}$ bound, however, remains state of the art.  

Gutekunst and Williamson \cite{Gut19b}  show that \emph{integrality gap} of the subtour LP on circulant instances -- the worst case ratio of the subtour LP relative to the TSP -- is exactly 2.  Figure \ref{fig:intGap} describes the circulant (but non-metric) TSP instances and corresponding subtour LP solution used to show that the integrality gap on circulant instances is at least 2.  These instances have $n=2^{k+1}$ vertices.  Edges of length $d$ have cost 0, edges of length $1$ have cost 1, and all other edges have arbitrarily large costs.  Gutekunst and Williamson argue that the cheapest possible tour costs $2^{k+1}-2$.  In contrast, the solution shown in Figure \ref{fig:intGap} places weight $1/2$ on all edges of length 1 and weight $1$ on all edges of length $d$.  It is feasible for the subtour LP, and the cost of such a subtour LP solution is only $2^k.$

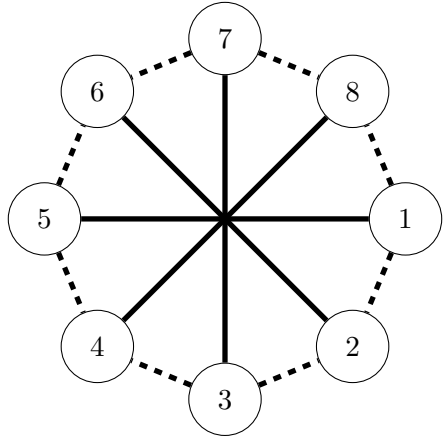
\begin{figure}[t]
	\centering
	
	\begin{tikzpicture}[scale=0.6]
	\tikzset{vertex/.style = {shape=circle,draw,minimum size=2.5em}}
	\tikzset{edge/.style = {->,> = latex'}}
	\tikzstyle{decision} = [diamond, draw, text badly centered, inner sep=3pt]
	\tikzstyle{sq} = [regular polygon,regular polygon sides=4, draw, text badly centered, inner sep=3pt]
	\node[vertex] (a) at  (4, 0) {$1$};
	\node[vertex] (b) at  (2.828, 2.828) {$8$};
	\node[vertex] (c) at  (0, 4) {$7$};
	\node[vertex] (d) at  (-2.828,2.828) {$6$};
	\node[vertex] (e) at  (-4, 0) {$5$};
	\node[vertex] (f) at  (-2.828, -2.828) {$4$};
	\node[vertex] (g) at  (0, -4) {$3$};
	\node[vertex] (h) at  (2.828, -2.828) {$2$};

	\draw[dashed, line width=2pt] (a) to (b);
	\draw[dashed, line width=2pt] (c) to (b);
	\draw[dashed, line width=2pt] (c) to (d);
	\draw[dashed, line width=2pt] (e) to (d);
	\draw[dashed, line width=2pt] (e) to (f);
	\draw[dashed, line width=2pt] (g) to (f);
	\draw[dashed, line width=2pt] (g) to (h);
	\draw[dashed, line width=2pt] (a) to (h);
	
	\draw[line width=2pt] (e) to (a);
	\draw[line width=2pt] (b) to (f);
	\draw[line width=2pt] (c) to (g);
	\draw[line width=2pt] (d) to (h);
	\end{tikzpicture}
	\caption{An example of a class of instances showing that the integrality gap of the subtour LP restricted to circulant instances is at least 2.  The dashed edges have weight $1/2$ and cost 1, while the full edges have weight 1 and cost 0.  All other edges have arbitrarily large cost} \label{fig:intGap}
\end{figure} 

Gutekunst and Williamson noted that many facet-defining inequalities eliminate the specific subtour LP solution indicated in Figure \ref{fig:intGap}, including the ladder, chain, and crown inequalities (see Boyd and Cunningham \cite{Boyd91}, Padberg and Hong \cite{Pad80}, and Naddef and Rinaldi \cite{Nad92}): any of these inequalities could be added to the subtour LP to potentially strengthen its integrality gap on Circulant TSP instances.  Indeed, the crown inequalities of Naddef and Rinaldi \cite{Nad92} are also motivated by the exact same subtour LP solution as as shown in Figure \ref{fig:intGap}!

However, Gutekunst and Williamson also noted that a cursory modification to these subtour LP weights -- marginally increasing the weight on length-1 edges and decreasing the weight on length-$d$ edges -- yields edge weights that 1) are feasible for the subtour LP, 2) obey the ladder, chain, and crown inequalities, and 3) still show that the integrality gap of the subtour LP is 2 on circulant instances.  

More specifically, consider solutions that place a weight of $\lambda$ on every edge of length 1, and a weight of $2-2\lambda$ on every edge of length $d$.  Such a solution is only in $STSP(n)$ if $$1-\f{2}{n} \leq \lambda \leq 1.$$  However, adding the crown inequalities, e.g., only imposes that $$\lambda \geq \f{1}{2}+\f{2}{3n}+\f{1}{3(n-6)}.$$  Gutekunst and Williamson conjectured  Inequality (\ref{eq:main}) as a way to eliminate this entire family of bad instances. Consider instances on $n=2^{k+1}$ vertices and potential solutions that place a weight of $\lambda$ on every edge of length 1, and a weight of $2-2\lambda$ on every edge of length $d$.  Then Inequality (\ref{eq:main}) directly implies $$n\lambda + 0(\f{n}{2})(2-2\lambda) \geq n-2.$$  That is, that $$\lambda \geq 1-\f{2}{n}.$$  Inequality (\ref{eq:main}) thus takes a canonically bad family of subtour LP solutions, and eliminates every single instance in that family that is outside $STSP(n)$.

\section{Theorem \ref{thm:main}} \label{sec:mainpf}

\subsection{Preliminaries: Notation and Lemmas}
We briefly recall our notation.  Let $n=2d$ where $d$ is even.  For a TSP instance on $[n]$, let $\ell_{i, j}$ denote the length of edge $\{i, j\},$ so that $$\ell_{i, j}=\min\{|i-j|, n-|i-j|\}.$$  Let $x_e$ denote the standard TSP variables associated to each edge $e\in E$ of the complete graph.  For a vector $x\in\R^{|E|}$ and for $1\leq i\leq d$, let $$t_i = \sum_{\{s, t\}\in E: \ell_{s, t} = i} x_{s, t}$$ denote the total weight of edges of length $i$.

\begin{thm}\label{thm:main}
For $n$ divisible by 4 and $1\leq i\leq d,$ let $$c_i = \begin{cases} i, & i\text{ odd} \\ d-i, & i \text{ even}.\end{cases}$$  Then for any $x\in\R^E$ that is a feasible TSP input, \begin{equation}\sum_{i=1}^d c_i t_i \geq n-2. \tag{\ref{eq:main}}\end{equation}  That is, the {\bf circlet inequalities} are valid.
\end{thm}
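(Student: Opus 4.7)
My plan is to follow the two-lemma strategy sketched by the authors. Because the circlet inequality is linear and $STSP(n)$ is the convex hull of Hamiltonian-cycle incidence vectors, it suffices to verify it on those vertices. So I would assume, for contradiction, that there is a Hamiltonian cycle $H$ on $K_n$ with $\sum_{i=1}^d c_i t_i(H) < n-2$, and take $H$ to be such a counterexample with $n$ as small as possible.

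\textbf{Lemma 1: a cheap-edge count.} The first observation is that $c_1 = 1$, $c_d = 0$, and $c_i \ge 2$ for every other $i$. Indeed $c_i = i \ge 3$ when $i\ge 3$ is odd, and $c_i = d-i \ge 2$ when $i$ is even with $2 \le i \le d-2$ (here we use $n \equiv 0 \pmod 4$, so that $d$ and $d-2$ are both even). Combining this with $\sum_i t_i = n$ yields the crude bound
\[
\sum_{i=1}^d c_i t_i \;\ge\; t_1 + 2(n - t_1 - t_d) \;=\; 2n - t_1 - 2 t_d.
\]
If $H$ is a counterexample, then $t_1 + 2 t_d > n+2$, so $H$ is heavily concentrated in edges of length $1$ and length $d$.

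\textbf{Lemma 2: sparsity within windows.} The real work is to prove that in a minimal counterexample, no ``window'' $W_v := \{v,v+1,v+d,v+d+1\}$ contains $3$ or more of its four short/long edges $\{v,v+1\}$, $\{v+d,v+d+1\}$, $\{v,v+d\}$, $\{v+1,v+d+1\}$. My plan is a contraction argument: assume some $W_v$ contains three such edges, do a small case analysis on which three, and in each case contract a saturated pair of endpoints (either $v$ with $v+1$, or $v+d$ with $v+d+1$). This produces a Hamiltonian cycle $H'$ on a smaller vertex set, and the key step is to track carefully how each $t_i$ and each coefficient $c_i$ transforms under the contraction: edges straddling the contracted region acquire new lengths, and the modulus itself changes. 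The aim is to show that $H'$ still violates a suitably modified circlet inequality on the smaller instance, contradicting the minimality of $H$.

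\textbf{Combining the lemmas.} With Lemma 2 in hand, the contradiction with Lemma 1 should come from a weighted double count across the $n$ windows $W_v$. Each length-$1$ edge and each length-$d$ edge lies in exactly two windows, and Lemma 2's bound of at most two short/long edges per window, combined with the observation that a window containing both of its length-$d$ edges is saturated and so excludes both of its length-$1$ edges, should upper-bound $t_1 + 2 t_d$ by a quantity incompatible with the lower bound $t_1 + 2 t_d > n+2$ from Lemma 1.

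\textbf{Main obstacle.} The hardest piece by far is Lemma 2. The contraction bookkeeping is delicate: a single window contraction decreases $n$ by $2$, so divisibility by $4$ and the exact values of the coefficients $c_i$ shift in the reduced instance, forcing either a simultaneous two-window contraction that preserves $n \equiv 0 \pmod 4$ or a direct cost accounting that sidesteps the change of modulus. On top of this, in every subcase one must verify that the contracted object is actually a single Hamiltonian cycle rather than a union of shorter cycles, which is where the real combinatorial bookkeeping lives.
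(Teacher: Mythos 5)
Your skeleton matches the paper's: assume a minimal counterexample, show via the coefficient bounds that $t_1+2t_d>n+2$ (your Lemma 1 is exactly the paper's Lemma \ref{lm1}), show that no window contains three of its four length-$1$/length-$d$ edges (the paper's Lemma \ref{lm2}), and derive a contradiction by counting over windows. But the heart of the proof is the window-sparsity lemma, and there you have a genuine gap that you yourself flag. Your proposed contraction -- identifying $v$ with $v+1$ (or $v+d$ with $v+d+1$) -- removes two vertices, which destroys the hypothesis $4\mid n$, shifts $d$ by $1$, and flips the parity classes that define the coefficients $c_i$; as stated this induction does not close. The paper's resolution is to contract the \emph{entire} four-vertex window at once: if a window contains three of the four short/long edges, those three edges form a path through all four window vertices, and together with the two tour edges leaving the window at external vertices $j$ and $k$ they form a five-edge path that is replaced by the single edge $\{j,k\}$. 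This removes exactly four vertices (so $n-4$ is again divisible by $4$ and $d$ drops to $d-2$), automatically yields a Hamiltonian cycle (no subtour check is needed, since one contracts a subpath of the tour), and the relabeling is chosen so every vertex keeps its parity. One then proves (Proposition \ref{prop:dec}) that no surviving edge increases in cost, and (Propositions \ref{prop:cont1} and \ref{prop:cont2}, via extensive casework on the parities and positions of $j$ and $k$) that the contracted path's cost exceeds the new edge's cost by at least $4$; hence the smaller tour violates $\sum c_i' t_i' \ge (n-4)-2$, contradicting minimality. None of this accounting appears in your proposal, and it is the bulk of the paper's argument. The second of Propositions \ref{prop:cont1}--\ref{prop:cont2} is genuinely delicate: the local drop can be only $2$, and one needs a global parity/walk argument over a four-block quotient graph to show some other edge must also drop by $2$.

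Two smaller points. First, your final count has the multiplicities wrong: with the paper's $d$ windows $W_1,\dots,W_d$, each length-$1$ edge lies in exactly \emph{one} window and each length-$d$ edge in exactly two, giving $\sum_u T_u = t_1+2t_d \le 2d = n < n+2$ directly; your stated "each lies in exactly two windows'' would only yield $t_1+t_d\le n$, which is vacuous. Second, a minimality argument that descends by four vertices needs the base case $n=4$ (the paper's Claim \ref{cm1}, where the inequality reads $t_1\ge 2$), which your proposal omits.
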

Figure \ref{fig:tight} shows some instances for which Inequality (\ref{eq:main}) is tight.

\begin{figure}[h!]
	\begin{center}

\begin{tikzpicture}[scale=0.5]
\tikzset{vertex/.style = {shape=circle,draw,minimum size=3.2em}}
\tikzset{edge/.style = {->,> = latex'}}
\tikzstyle{decision} = [diamond, draw, text badly centered, inner sep=3pt]
\tikzstyle{sq} = [regular polygon,regular polygon sides=4, draw, text badly centered, inner sep=3pt]
\node[vertex] (a) at  (0, 2) {$1$};
\node[vertex] (b) at  (4, 2) {$2$};
\node[vertex] (c) at  (8, 2) {$3$};
\node (d) at  (12, 2) {$\cdots$};
\node[vertex] (e) at  (16, 2) {$d-2$};
\node[vertex] (f) at  (20, 2) {$d-1$};
\node[vertex] (g) at  (24, 2) {$d$};

\node[vertex] (a1) at  (0, -2) {$d+1$};
\node[vertex] (b1) at  (4, -2) {$d+2$};
\node[vertex] (c1) at  (8, -2) {$d+3$};
\node (d1) at  (12, -2) {$\cdots$};
\node[vertex] (e1) at  (16, -2) {$n-2$};
\node[vertex] (f1) at  (20, -2) {$n-1$};
\node[vertex] (g1) at  (24, -2) {$n$};

\draw[line width=2pt]  (a) -- (b);
\draw[line width=2pt]  (b) -- (c);
\draw[line width=2pt]  (d) -- (c);
\draw[line width=2pt]  (d) -- (e);
\draw[line width=2pt]  (e) -- (f);
\draw[line width=2pt]  (g) -- (f);

\draw[line width=2pt]  (a1) -- (b1);
\draw[line width=2pt]  (b1) -- (c1);
\draw[line width=2pt]  (d1) -- (c1);
\draw[line width=2pt]  (d1) -- (e1);
\draw[line width=2pt]  (e1) -- (f1);
\draw[line width=2pt]  (g1) -- (f1);

\draw[line width=2pt]  (a) -- (a1);
\draw[line width=2pt]  (g) -- (g1);

\end{tikzpicture}

\vspace{1.5cm}

\begin{tikzpicture}[scale=0.5]
\tikzset{vertex/.style = {shape=circle,draw,minimum size=3.2em}}
\tikzset{edge/.style = {->,> = latex'}}
\tikzstyle{decision} = [diamond, draw, text badly centered, inner sep=3pt]
\tikzstyle{sq} = [regular polygon,regular polygon sides=4, draw, text badly centered, inner sep=3pt]
\node[vertex] (a) at  (0, 2) {$1$};
\node[vertex] (b) at  (4, 2) {$2$};
\node[vertex] (c) at  (8, 2) {$3$};
\node (d) at  (12, 2) {$\cdots$};
\node[vertex] (e) at  (16, 2) {$d-2$};
\node[vertex] (f) at  (20, 2) {$d-1$};
\node[vertex] (g) at  (24, 2) {$d$};

\node[vertex] (a1) at  (0, -2) {$d+1$};
\node[vertex] (b1) at  (4, -2) {$d+2$};
\node[vertex] (c1) at  (8, -2) {$d+3$};
\node (d1) at  (12, -2) {$\cdots$};
\node[vertex] (e1) at  (16, -2) {$n-2$};
\node[vertex] (f1) at  (20, -2) {$n-1$};
\node[vertex] (g1) at  (24, -2) {$n$};

\draw[line width=2pt]  (a) -- (b);
\draw[line width=2pt]  (d) -- (c);
\draw[line width=2pt]  (d) -- (e);
\draw[line width=2pt]  (g) -- (f);

\draw[line width=2pt]  (b1) -- (c1);
\draw[line width=2pt]  (e1) -- (f1);

\draw[line width=2pt]  (a) -- (a1);
\draw[line width=2pt]  (b) -- (b1);
\draw[line width=2pt]  (c) -- (c1);
\draw[line width=2pt]  (e) -- (e1);
\draw[line width=2pt]  (f) -- (f1);
\draw[line width=2pt]  (g) -- (g1);

\draw[line width=2pt]  (a1) to [out=-25, in=-155] (g1);

\end{tikzpicture}

\vspace{1.5cm}

\begin{tikzpicture}[scale=0.5]
\tikzset{vertex/.style = {shape=circle,draw,minimum size=3.2em}}
\tikzset{edge/.style = {->,> = latex'}}
\tikzstyle{decision} = [diamond, draw, text badly centered, inner sep=3pt]
\tikzstyle{sq} = [regular polygon,regular polygon sides=4, draw, text badly centered, inner sep=3pt]
\node[vertex] (a) at  (0, 2) {$1$};
\node[vertex] (b) at  (4, 2) {$2$};
\node[vertex] (c) at  (8, 2) {$3$};
\node (d) at  (12, 2) {$\cdots$};
\node[vertex] (e) at  (16, 2) {$d-2$};
\node[vertex] (f) at  (20, 2) {$d-1$};
\node[vertex] (g) at  (24, 2) {$d$};
\node (h) at  (27, 2) {};

\node (h1) at  (-3, -2) {};
\node[vertex] (a1) at  (0, -2) {$d+1$};
\node[vertex] (b1) at  (4, -2) {$d+2$};
\node[vertex] (c1) at  (8, -2) {$d+3$};
\node (d1) at  (12, -2) {$\cdots$};
\node[vertex] (e1) at  (16, -2) {$n-2$};
\node[vertex] (f1) at  (20, -2) {$n-1$};
\node[vertex] (g1) at  (24, -2) {$n$};

\draw[line width=2pt]  (a) -- (b);
\draw[line width=2pt]  (b) -- (c);
\draw[line width=2pt]  (d) -- (c);
\draw[line width=2pt]  (d) -- (e);
\draw[line width=2pt]  (e) -- (f);

\draw[line width=2pt]  (b1) -- (c1);
\draw[line width=2pt]  (d1) -- (c1);
\draw[line width=2pt]  (d1) -- (e1);
\draw[line width=2pt]  (e1) -- (f1);

\draw[line width=2pt]  (a) -- (a1);
\draw[line width=2pt]  (g) -- (g1);
\draw[line width=2pt]  (f) -- (f1);

\draw[line width=2pt]  (b1) to [out=-25, in=-155] (g1);

\draw[line width=2pt, dashed]  (g) -- (h);
\draw[line width=2pt, dashed]  (h1) -- (a1);
\end{tikzpicture}

	\end{center}
	\caption{Instances for which Inequality  (\ref{eq:main}) is tight.  Any present edges indicate a weight of 1 on the corresponding LP variable.  All other edges have weight 0.  The first figure shows a tour using two edges of length $d$ and $n-2$ edges of length 1.  The second tour using all $d$ edges of length $d$, $d-1$ edges of length $1$, and one edge of length $d-1$.  The third tour (which includes the edge $\{d, d+1\}$ indicated by a dashed line) uses 3 edges of length $d$,  $n-4$ edges of length 1, and one edge of length $d-2.$ }\label{fig:tight}\end{figure}
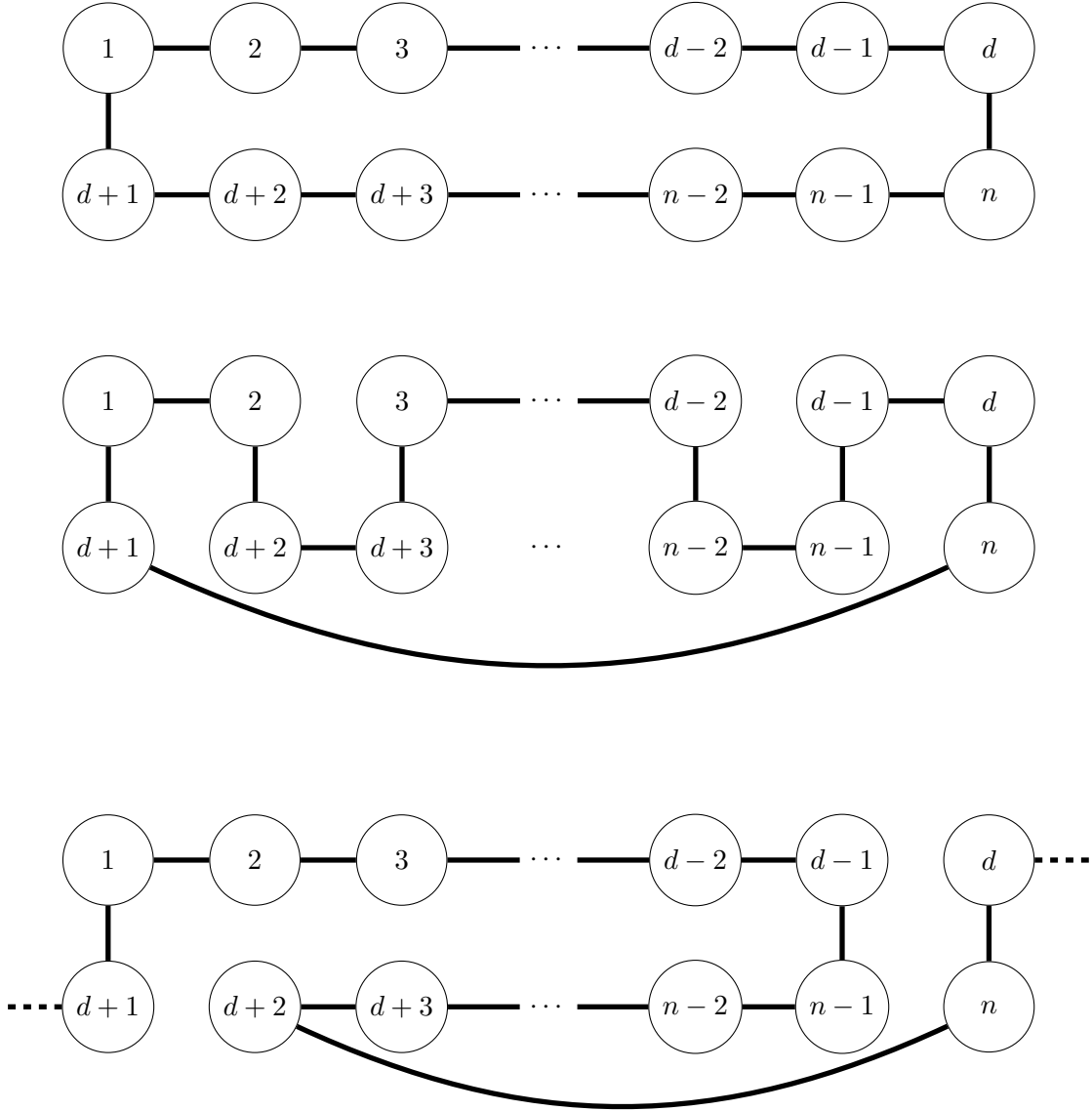

Our proof of Theorem \ref{thm:main} uses three ingredients.  First, Claim \ref{cm1} provides a base case.

\begin{cm}\label{cm1}
Inequality  (\ref{eq:main})  is valid for $n=4.$
\end{cm}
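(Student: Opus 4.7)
The plan is to reduce the inequality to an essentially trivial statement about Hamiltonian cycles on $K_4$ and then either enumerate cases or use a short counting argument.

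First I would specialize the coefficients. For $n=4$ we have $d = n/2 = 2$, so the only indices are $i=1$ (odd, giving $c_1 = 1$) and $i=2$ (even, giving $c_2 = d - 2 = 0$). The right-hand side is $n-2 = 2$, so Inequality (\ref{eq:main}) collapses to $t_1 \geq 2$. Thus the claim amounts to showing that every feasible TSP input on $4$ vertices places total weight at least $2$ on the four length-$1$ edges $\{1,2\},\{2,3\},\{3,4\},\{4,1\}$.

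Next I would note that, as a polyhedral statement, it suffices to check $t_1 \geq 2$ at the vertices of $STSP(4)$, which are precisely the incidence vectors of Hamiltonian cycles on $K_4$. There are only three such cycles (up to orientation and starting vertex): $1\text{-}2\text{-}3\text{-}4\text{-}1$, which uses four length-$1$ edges ($t_1 = 4$); $1\text{-}2\text{-}4\text{-}3\text{-}1$, which uses the two length-$1$ edges $\{1,2\},\{3,4\}$ and the two length-$2$ edges $\{2,4\},\{1,3\}$ ($t_1 = 2$); and $1\text{-}3\text{-}2\text{-}4\text{-}1$, which similarly has $t_1 = 2$. In all three cases $t_1 \geq 2$, so the inequality holds on every vertex of $STSP(4)$ and hence on the whole polytope.

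Alternatively, and perhaps cleaner, I would give the one-line counting argument: any Hamiltonian cycle on $K_4$ has exactly $n = 4$ edges, so $t_1 + t_2 = 4$. There are only two edges of length $2$ in $K_4$ (namely $\{1,3\}$ and $\{2,4\}$), so $t_2 \leq 2$ and hence $t_1 \geq 2$, which is precisely the specialization of (\ref{eq:main}) to $n=4$. Since $n=4$ is so small, there is no real obstacle; the only minor subtlety is making sure the reduction of the coefficients and right-hand side to $t_1 \geq 2$ is stated explicitly so that the base case lines up cleanly with the inductive/structural arguments used in Lemmas \ref{lm1} and \ref{lm2}.
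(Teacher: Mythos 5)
Your proposal is correct and its ``alternative'' counting argument ($t_1+t_2=4$, only two length-$2$ edges so $t_2\leq 2$, hence $t_1\geq 2$) is exactly the argument the paper gives; the explicit enumeration of the three Hamiltonian cycles on $K_4$ is a harmless extra verification. No issues.
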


\begin{proof}
When $n=4$, the inequality becomes $$t_1+0t_2\geq 2.$$  Any Hamiltonian cycle must use four edges, the only possible edge lengths are 1 and 2, and there are only two distinct edges of length 2.  Hence the claim holds.
\hfill
\end{proof}

Our next lemma argues that any counterexample requires many cheap edges: edges of length 1 and $d$.

\begin{lm}\label{lm1}
Suppose that we have a valid TSP instance where $$\sum_{i=1}^d c_i t_i < n-2.$$ Then $$t_1+2t_d > n+2.$$
\end{lm}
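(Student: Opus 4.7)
The plan is to exploit two elementary facts: the explicit structure of the coefficients $c_i$, and the fact that any feasible TSP point $x \in STSP(n)$ satisfies $\sum_{i=1}^d t_i = n$, since $x$ is a convex combination of incidence vectors of Hamiltonian cycles (each of which uses exactly $n$ edges) and so has non-negative entries summing to $n$.

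First I would catalogue the coefficients. By definition $c_1 = 1$, and since $d = n/2$ is even, $c_d = d-d = 0$. For every intermediate index $i$ with $2 \leq i \leq d-1$, I claim $c_i \geq 2$: when $i$ is odd and $i \geq 3$, we have $c_i = i \geq 3$; when $i$ is even and $2 \leq i \leq d-2$, we have $c_i = d - i \geq 2$, which holds as long as $d \geq 4$, that is $n \geq 8$. (The only case in which the intermediate range is empty is $n = 4$, already handled by Claim \ref{cm1}, so we may assume $n \geq 8$.)

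Next I would use these bounds and $\sum_{i=1}^d t_i = n$ (together with $t_i \geq 0$) to estimate
\begin{equation*}
\sum_{i=1}^d c_i t_i \;=\; t_1 + 0 \cdot t_d + \sum_{i=2}^{d-1} c_i t_i \;\geq\; t_1 + 2\sum_{i=2}^{d-1} t_i \;=\; t_1 + 2(n - t_1 - t_d) \;=\; 2n - t_1 - 2t_d.
\end{equation*}
Combining this with the hypothesis $\sum_{i=1}^d c_i t_i < n - 2$ gives $2n - t_1 - 2t_d < n - 2$, and rearranging yields the desired inequality $t_1 + 2t_d > n + 2$.

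There is no real obstacle here: the argument is a one-shot calculation once the correct lower bound $c_i \geq 2$ for intermediate $i$ is in hand. The only subtlety is the bookkeeping on the parities of $d$ and $i$ to confirm that bound uniformly (and to check that the base case $n = 4$, in which the intermediate range is empty, is covered separately).
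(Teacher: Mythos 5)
Your proof is correct and is essentially identical to the paper's: both bound the intermediate coefficients below by $2$, use $\sum_{i=1}^d t_i = n$ to get $\sum_i c_i t_i \geq 2n - t_1 - 2t_d$, and rearrange. Your extra care in verifying $c_i \geq 2$ for $2 \leq i \leq d-1$ (and noting the $n=4$ edge case) is a welcome bit of bookkeeping that the paper leaves implicit.
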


\begin{proof}
In a counterexample,
\begin{align*}
n-2 &> \sum_{i=1}^d c_i t_i \\
&\geq t_1 + 0 t_d + 2\sum_{i=2}^{d-1} t_i \\
&= t_1 + 0 + 2 (n-t_1-t_d) \\
& = 2n -t_1 - 2t_d.
\end{align*}

Rearranging $$n-2 > 2n -t_1 - 2t_d$$  yields the desired inequality. \hfill
\end{proof}

Finally, we state our technical lemma.  Its proof is deferred to Section \ref{sec:lem}.  In the notation of the lemma -- and throughout this paper -- vertex labels are implicitly assumed to be taken mod $n$ (e.g. for a vertex $v\in [n],$ we write $v+k$ to denote $v+k\mod n$).

\begin{lm}\label{lm2}
Suppose that we have a valid TSP instance where $$\sum_{i=1}^d c_i t_i < n-2,$$ and consider an instance that is minimal with respect to $n$.  Then the counterexample cannot have any of the structures shown in Figure \ref{fig:bad}.  That is, for any $u\in [n],$ a minimal counterexample cannot contain the three edges $\{u+d, u\}, \{u, u+1\}, \{u+1, u+1+d\};$ a minimal counterexample cannot contain the three edges $\{u+1, u\}, \{u, u+d\}, \{u+d, u+1+d\};$ and a minimal counterexample cannot contain the three edges $ \{u, u+1\}, \{u+1, u+1+d\}, \{u+1+d, u+d\}.$
\end{lm}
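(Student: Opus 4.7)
The plan is to prove Lemma \ref{lm2} by a contraction argument, showing that a window configuration in a counterexample can be shortcut to produce a smaller counterexample. Suppose for contradiction that a minimal counterexample $T$ (of size $n$) contains one of the three window configurations at some vertex $u$. In each configuration the three listed edges form a Hamiltonian path through the four window vertices $\{u, u+1, u+d, u+d+1\}$; let $w_1, w_2$ denote the external tour-neighbors of the two endpoints of this path. Deleting these four vertices and adding the shortcut edge $\{w_1, w_2\}$ produces a Hamiltonian cycle $T'$ on $n - 4$ vertices. Since $n - 4$ remains divisible by $4$, the minimality hypothesis applies and forces $\sum c'_i t'_i \geq n - 6$; since $T$ is a counterexample and costs are integer-valued, $\sum c_i t_i \leq n - 3$. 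So it suffices to show that the cost change $C' - C$ is at most $-4$.

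The bookkeeping rests on two observations. After cyclically relabeling the remaining $n - 4$ vertices --- which fall into two arcs $A = \{u+2, \ldots, u+d-1\}$ and $B = \{u+d+2, \ldots, u-1\}$ of $d - 2$ vertices each --- any tour edge with both endpoints in the same arc retains its length, while any tour edge crossing between the two arcs loses $2$ from its length; and the new coefficients (with $d' = d - 2$) satisfy $c'_i = c_i$ for $i$ odd and $c'_i = c_i - 2$ for $i$ even. Combining these, each of the $n - 5$ common tour edges contributes either $0$ (within-arc odd or cross-arc even) or $-2$ (within-arc even or cross-arc odd) to $C' - C$. The remaining contribution comes from the three deleted window edges (total cost $2$ for configurations 2 and 3, only $1$ for configuration 1), the two ``entering'' edges at $w_1, w_2$ deleted (total cost $E$), and the added shortcut edge (cost $S$).

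A direct parity-based case analysis on which arcs $w_1, w_2$ lie in, along with the parities of their offsets from the window, shows that $S - E - (\text{window cost}) \leq -4$ in every subcase for configurations 2 and 3, which already gives the contradiction. The main obstacle will be configuration 1: its smaller window cost of $1$ leaves a handful of subcases --- e.g., both $w_1, w_2 \in A$ with odd offsets and $w_1$ farther from the window than $w_2$ --- in which the above accounting yields only $S - E - 1 = -2$. To close these cases, I plan to use a combinatorial parity argument: assign each remaining vertex $v$ the color $f(v) := \mathrm{arc}(v) \oplus \mathrm{parity}(v) \in \{0, 1\}$, and observe that an edge is ``bad'' (contributes $-2$) precisely when its two endpoints share $f$-value. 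A case check will confirm that in every problematic subcase of configuration 1, $w_1$ and $w_2$ have the same $f$-value, so the common path between them has an even number of $f$-alternations; since $n - 5$ is odd (because $4 \mid n$), this forces an odd --- hence positive --- number of bad common edges, supplying the missing $-2$. Verifying the ``same-$f$-value'' claim uniformly across the problematic subcases of configuration 1 is the key technical difficulty.
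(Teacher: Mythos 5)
Your proposal is correct and follows essentially the same route as the paper: contract the window's five-edge path to a single edge $\{w_1,w_2\}$, invoke minimality on the resulting tour on $n-4$ vertices, track the cost of each surviving edge via arc membership and length parity (your ``$0$ or $-2$'' dichotomy is exactly the paper's Proposition \ref{prop:dec}), and close the tight subcases of the Type-A configuration with a parity argument. Your $f$-coloring of the remaining path is an equivalent reformulation of the paper's walk argument on the four-node graph $G^{\circ}$, and the casework you defer (the bounds on $S-E$ and the identification of the equality subcases where $f(w_1)=f(w_2)$) is precisely what the paper relegates to its appendix, with the outcomes you assert matching what that casework establishes.
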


\begin{figure}[h!]
\begin{center}

\begin{tikzpicture}[scale=0.7]
\tikzset{vertex/.style = {shape=circle,draw,minimum size=3.5em}}
\tikzset{edge/.style = {->,> = latex'}}
\tikzstyle{decision} = [diamond, draw, text badly centered, inner sep=3pt]
\tikzstyle{sq} = [regular polygon,regular polygon sides=4, draw, text badly centered, inner sep=3pt]
\node[vertex] (a) at  (0, 2) {$u$};
\node[vertex] (b) at  (3, 2) {$u+1$};
\node[vertex] (c) at  (0, -1) {$u+d$};
\node[vertex, align=center] (d) at  (3, -1) {\footnotesize{$u+d$} \\ \footnotesize{$+1$}};
\node (e) at (1.5, -3) {Type A};

\node[vertex] (a1) at  (8, 2) {$u$};
\node[vertex] (b1) at  (11, 2) {$u+1$};
\node[vertex] (c1) at  (8, -1) {$u+d$};
\node[vertex, align=center] (d1) at  (11, -1) {\footnotesize{$u+d$} \\ \footnotesize{$+1$}};
\node (e1) at (9.5, -3) {Type B1};

\node[vertex] (a2) at  (16, 2) {$u$};
\node[vertex] (b2) at  (19, 2) {$u+1$};
\node[vertex] (c2) at  (16, -1) {$u+d$};
\node[vertex, align=center] (d2) at  (19, -1) {\footnotesize{$u+d$} \\ \footnotesize{$+1$}};
\node (e2) at (17.5, -3) {Type B2};

\draw[line width=2pt]  (a) -- (b);
\draw[line width=2pt]  (a) -- (c);
\draw[line width=2pt]  (b) -- (d);

\draw[line width=2pt]  (a1) -- (b1);
\draw[line width=2pt]  (a1) -- (c1);
\draw[line width=2pt]  (c1) -- (d1);

\draw[line width=2pt]  (a2) -- (b2);
\draw[line width=2pt]  (b2) -- (d2);
\draw[line width=2pt]  (c2) -- (d2);
\end{tikzpicture}
\end{center}
\caption{Lemma \ref{lm2} proves that none of these structures can occur in a minimal TSP instance (with respect to $n$) where $\sum_{i=1}^d c_i t_i < n-2.$  }\label{fig:bad}\end{figure}
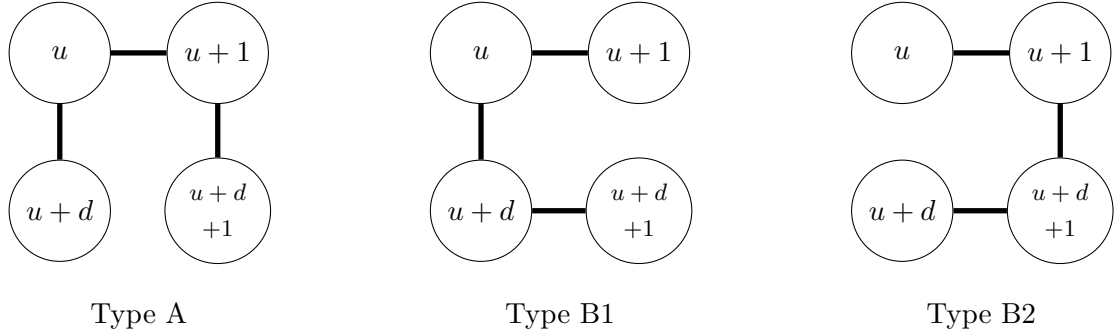

\subsection{Proof of Theorem  \ref{thm:main}}

We can now prove Theorem  \ref{thm:main}.  We consider a hypothetical minimal counterexample to Theorem \ref{thm:main}, i.e., a Hamiltonian cycle where  $$\sum_{i=1}^d c_i t_i < n-2.$$ By Claim \ref{cm1}, $n\geq 8.$  By Lemma \ref{lm2}, our Hamiltonian cycle cannot contain any of  the structures shown in Figure \ref{fig:bad}.  By Lemma \ref{lm1},  $$t_1+2t_d > n+2.$$  We will specifically contradict this claim: we argue that the lack of structures in Figure \ref{fig:bad} forces $t_1+2t_d$ to be small.

To compute $t_1+2t_d$, we look at induced subgraphs of our Hamiltonian cycle on vertices $u, u+1, u+d, u+d+1.$  We call such a subgraph of four vertices a \emph{window}.  Figure \ref{fig:window} shows two natural ways to order the vertices and view a window.  Notice that we can move from one window (e.g. $u=1$) to the next (e.g $u=2$) by rotating (in the left picture) or sliding horizontally (on the right picture). 

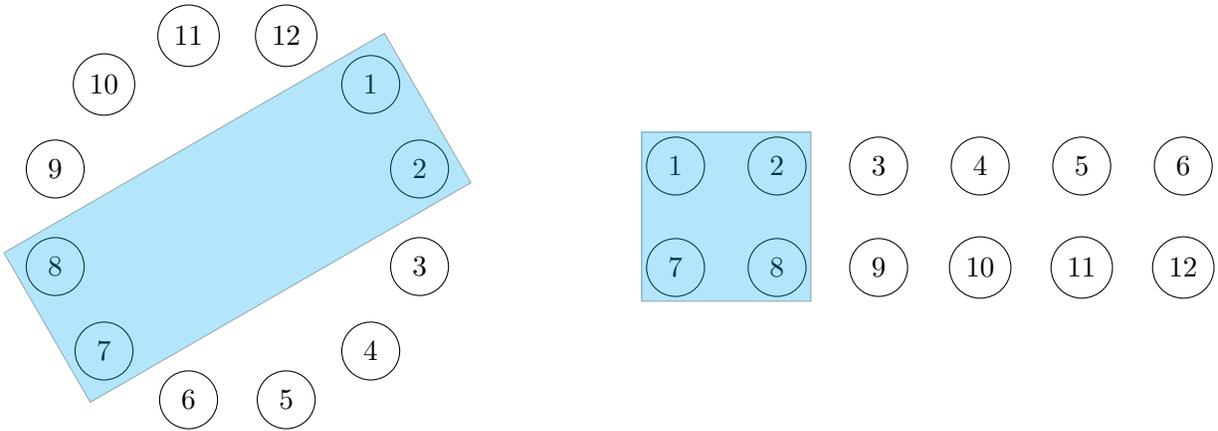
\begin{figure}[h!]
\begin{center}
\begin{tikzpicture}[scale=0.5]

\tikzset{vertex/.style = {shape=circle,draw,minimum size=2em}}
\tikzset{edge/.style = {->,> = latex'}}
\node[draw=none,minimum size=5cm,regular polygon,regular polygon sides=12] (a) {};

\foreach \n [count=\nu from 1, remember=\n as \lastn, evaluate={\nu+\lastn}] in {12, 11, ..., 1} 
\node[vertex, fill=white]at(a.corner \n){$\nu$};

\draw[fill=cyan, opacity=0.3, rotate=30] ($(a.corner 12)+(1, 1)$)  rectangle ($(a.corner 6)-(1, 1)$);

\end{tikzpicture}
\hspace{20mm} 
\begin{tikzpicture}[scale=0.45]
\tikzset{vertex/.style = {shape=circle,draw,minimum size=2em}}
\tikzset{edge/.style = {->,> = latex'}}
\tikzstyle{decision} = [diamond, draw, text badly centered, inner sep=3pt]
\tikzstyle{sq} = [regular polygon,regular polygon sides=4, draw, text badly centered, inner sep=3pt]
\node[vertex] (a) at  (0, 2) {$1$};
\node[vertex] (b) at  (3, 2) {$2$};
\node[vertex] (c) at  (6, 2) {$3$};
\node[vertex] (d) at  (9, 2) {$4$};
\node[vertex] (e) at  (12, 2) {$5$};
\node[vertex] (f) at  (15, 2) {$6$};

\node[vertex] (a1) at  (0, -1) {$7$};
\node[vertex] (b1) at  (3, -1) {$8$};
\node[vertex] (c1) at  (6, -1) {$9$};
\node[vertex] (d1) at  (9, -1) {$10$};
\node[vertex] (e1) at  (12, -1) {$11$};
\node[vertex] (f1) at  (15, -1) {$12$};

\node (fake) at (6, -5.5) {};

\draw[fill=cyan, opacity=0.3] ($(a)+(-1, 1)$)  rectangle ($(b1)-(-1, 1)$);
\end{tikzpicture}
\end{center}
\caption{Two views of a window: a collection of four vertices $u, u+1, u+d, u+d+1.$  In the window shown, $u=1$.}\label{fig:window}\end{figure}

We will count the number of edges of length 1 and $d$ by moving through windows.  If we count the total number of edges in each window with $u=1, 2, 3, ..., d,$ we will exactly count $t_1+2t_d.$ The intuition for this process is shown in Figure \ref{fig:winadd}, which shows exactly those $d$ windows.  Note that every possible length 1 edge is contained in exactly one window (e.g. $1\sim 2$ is only in the window  $1, 2, 7, 8$) while every length-$d$ edge is contained in exactly 2 windows (e.g. $1\sim 7$ is contained in the windows $1, 2, 7, 8$ and $12, 1, 6, 7$).

\begin{figure}[h!]
\begin{center}

\begin{tikzpicture}

\tikzset{vertex/.style = {shape=circle,draw,minimum size=2.5em}}
\tikzset{edge/.style = {->,> = latex'}}
\node[draw=none,minimum size=6cm,regular polygon,regular polygon sides=12] (a) {};

\foreach \n [count=\nu from 1, remember=\n as \lastn, evaluate={\nu+\lastn}] in {12, 11, ..., 1} 
\node[vertex, fill=white]at(a.corner \n){$\nu$};

\draw[fill=cyan, opacity=0.2, rotate=30] ($(a.corner 12)+(.75,.75)$)  rectangle ($(a.corner 6)+(-.75, -.75)$);

\draw[fill=cyan, opacity=0.2, rotate=-30] ($(a.corner 12)+(.75,.75)$)  rectangle ($(a.corner 6)+(-.75, -.75)$);

\draw[fill=cyan, opacity=0.2, rotate=-30] ($(a.corner 10)+(.75,.75)$)  rectangle ($(a.corner 4)+(-.75, -.75)$);

\draw[fill=cyan, opacity=0.2, rotate=-60] ($(a.corner 8)-(-.75,.75)$)  rectangle ($(a.corner 2)-(.75, -.75)$);

\draw[fill=cyan, opacity=0.2] ($(a.corner 3)+(.59,1.53)$)  rectangle ($(a.corner 9)+(-.59, -1.53)$);

\draw[fill=cyan, opacity=0.2, rotate=90] ($(a.corner 3)+(-.59,1.53)$)  rectangle ($(a.corner 9)+(.59, -1.53)$);

\end{tikzpicture}
\end{center}
\caption{Rotating the window from $u=1$ to $u=d$.  Each possible edge of length 1 is in one window, while each possible edge of length $d$ is in two.}\label{fig:winadd}\end{figure}
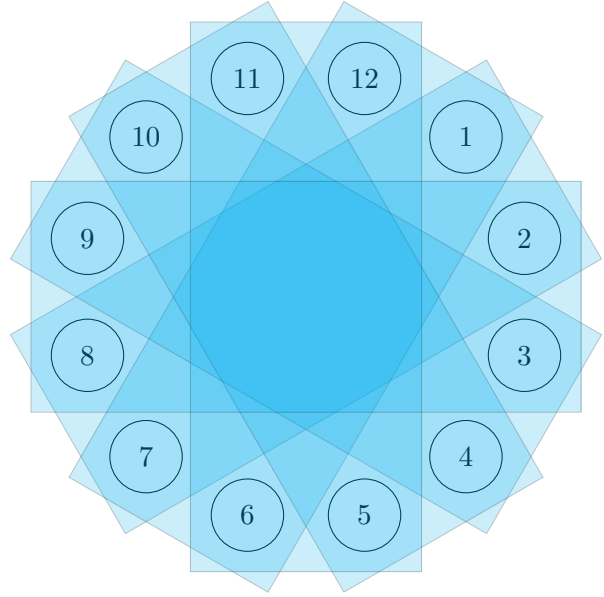

\begin{proof}[Proof (of Theorem \ref{thm:main})]
To formalize our argument, we let $W_u$ denote the window $u, u+1, u+d, u+1+d$.  Figure \ref{fig:window}, e.g., shows the window $W_1=W_7$ when $n=12.$   Let $T_u$ denote the total number of length 1 and $d$ edges within the window $W_u:$ $$T_u = \bi_{u\sim u+1} + \bi_{u+d\sim u+d+1} + \bi_{u\sim u+d} + \bi_{u+1\sim u+1+d}.$$ (Here $\bi_{\circ}$ denotes the indicator function if event $\circ$ occurs, so that, e.g., $\bi_{u\sim u+1}$ is 1 if the edge $\{u, u+1\}$ is in our Hamiltonian cycle, and zero otherwise.)

By Lemma \ref{lm2}, $$T_u \leq 2.$$  Having $T_u=4$ would imply a subtour, and $T_u=3$ would lead to one of the bad structures in Figure \ref{fig:bad}.  

Now we consider rotating the window $d$ times, as in Figure \ref{fig:winadd}.  We note that
\begin{align*}
t_1+2t_d &= T_1 + T_2 + ... + T_d \\
&\leq 2+2+...+2  \hspace{5mm} \text{(d times)}\\
&= 2d\\
&= n.
\end{align*}

The fact that $t_1+2t_d = T_1 + T_2 + ... + T_d$ follows by symmetry:  Any length-1 edge $i\sim i+1$ is included in either $T_i$ (if $i\leq d$) or $T_{i-d}.$  Since each of the $d$ windows contains two length-1 edges, and each of the $n$ length-1 edges is included in exactly one window, each length-1 edge is counted exactly once.  Each length-$d$ edge is analogously included in exactly two windows and counted twice.

This completes our proof, as we have argued that $$t_1+2t_d  \leq n < n+2,$$ contradicting Lemma \ref{lm1}.
\hfill
\end{proof}

\section{Proof of Technical Lemma}\label{sec:lem}
Recall our main technical lemma:
\begin{lm*}[Lemma \ref{lm2}]
Suppose that we have a valid TSP instance where $$\sum_{i=1}^d c_i t_i < n-2,$$ and consider an instance that is minimal with respect to $n$.  Then the counterexample cannot have any of the structures shown in Figure \ref{fig:bad}.   That is, for any $u\in [n],$ a minimal counterexample cannot contain the three edges $\{u+d, u\}, \{u, u+1\}, \{u+1, u+1+d\};$ a minimal counterexample cannot contain the three edges $\{u+1, u\}, \{u, u+d\}, \{u+d, u+1+d\};$ and a minimal counterexample cannot contain the three edges $ \{u, u+1\}, \{u+1, u+1+d\}, \{u+1+d, u+d\}.$
\end{lm*}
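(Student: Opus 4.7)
The plan is to prove Lemma~\ref{lm2} by contradiction via a \emph{contraction}: assuming a minimum-$n$ counterexample contains a Type A, B1, or B2 structure, I will construct a strictly smaller counterexample on $n' = n - 4$ vertices, contradicting either minimality or, when $n = 8$, Claim~\ref{cm1}. In each type the three forbidden edges form a Hamiltonian path $P = v_1 v_2 v_3 v_4$ through the window $\{u, u+1, u+d, u+d+1\}$. Let $a, b$ be the tour-neighbors of $v_1, v_4$ that lie outside the window; I delete the four window vertices, insert the \emph{bridge} edge $\{a, b\}$, and relabel the surviving $n - 4$ vertices cyclically as $1, \ldots, n - 4$. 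For $n \geq 8$, a short check rules out $a = b$ or $\{a,b\}$ already being in the tour (either would close a sub-cycle of length $\leq 6$), so the result is a valid Hamiltonian cycle on $n - 4$ vertices, with $n - 4$ still divisible by $4$.

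The crux is to show that this contracted tour still violates the circlet inequality, i.e., $\sum_i c'_i t'_i < n' - 2 = n - 6$ with $d' = d - 2$. Since the original satisfies $\sum_i c_i t_i < n - 2$, it is enough to prove Old~LHS $-$ New~LHS $\geq 4$. Decomposing the old tour's edges as \emph{internal} (the three edges of $P$, total cost $I$), \emph{external} ($\{a, v_1\}, \{b, v_4\}$, with costs $E_a, E_b$), and \emph{within} (the remaining $n - 5$ edges, both endpoints surviving), and the new tour's edges as the same within edges plus the bridge (new cost $B'$ in the new coefficients), the difference becomes
\begin{equation*}
I \;+\; (E_a + E_b - B') \;+\; \sum_{e \text{ within}} \bigl( c_{\ell(e)} - c'_{\ell'(e)} \bigr).
\end{equation*}
A short computation using $c_1 = 1$ and $c_d = 0$ gives $I = 1$ in Type A and $I = 2$ in Types B1, B2.

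The within-edge sum is organized by splitting the surviving vertices into the \emph{top arc} $\{u+2, \ldots, u+d-1\}$ and the \emph{bottom arc} $\{u+d+2, \ldots, u+n-1\}$. After cyclic relabeling, within-edges interior to one arc preserve their length, while within-edges spanning the two arcs (in particular all length-$d$ edges) lose exactly $2$ in length. Coupled with the coefficient shift $c_i \mapsto c'_i$ (which subtracts $2$ from each even $c_i$ with $i \leq d'$ and leaves odd $c_i$ fixed), each within-edge contributes $0$ or $2$ to $c_{\ell(e)} - c'_{\ell'(e)}$: the contribution is $2$ precisely when the edge is either (a) interior to one arc with even length $< d$, or (b) spans both arcs with odd length.

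The most delicate piece is the external/bridge term $E_a + E_b - B'$. Because $c_i$ is not a metric (e.g.\ $c_d = 0$ while $c_{d-1} = d-1$), this difference is not automatically non-negative and must be estimated by a careful case analysis on the positions of $a$ and $b$ relative to the window (which arc each lies in, and its distance to $v_1$ or $v_4$). The hard part of the proof will be organizing these cases together with the within-edge bookkeeping so that in every subcase the combined slack $I + (E_a + E_b - B') + \sum_{e \text{ within}}(c_{\ell(e)} - c'_{\ell'(e)})$ reaches at least $4$; the smaller internal budget $I = 1$ for Type A is what makes it the hardest of the three and explains why all three window configurations must be forbidden separately.
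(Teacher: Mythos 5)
Your overall strategy is the paper's: contract the four window vertices, bridge their two external neighbors, relabel, and show the tour's cost drops by at least $4$, contradicting minimality (or the $n=4$ base case). Your computation of the internal costs ($I=1$ for Type A, $I=2$ for Types B1/B2), your characterization of which surviving edges lose $2$ in cost versus stay the same, and your identification of the external/bridge term as the delicate piece all match the paper's Propositions \ref{prop:dec}, \ref{prop:cont1}, and \ref{prop:cont2}. (The paper also saves half the work by noting B1 and B2 are exchanged under the relabeling $i\mapsto n-i$, but handling them separately as you propose is harmless.)

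The gap is in how you propose to finish Type A. You assert that a case analysis on the positions of $a$ and $b$ will drive the combined slack to $4$ in every subcase, but for Type A this is false at the local level: there are position/parity combinations of $a$ and $b$ (for instance both of even index, both in the top arc, in the right order) for which $I+(E_a+E_b-B')$ equals exactly $2$, and no further information about $a$ and $b$ alone can improve this. The missing $2$ must come from the within-edge sum, i.e., you must prove that in each of these tight configurations at least one surviving edge strictly decreases in cost. That is a global statement about the whole tour, not a local one about the window and its neighbors, and it is the genuinely hard step. The paper handles it by first showing $I+(E_a+E_b-B')$ is always even in Type A (so the only problematic value is exactly $2$), then partitioning the surviving vertices into four equal-size classes by arc and index-parity, observing that the cost-preserving edges are exactly those joining a class of one pair $S_1$ to a class of the other pair $S_2$, that every tight $(a,b)$ configuration has both endpoints in $S_2$, and that a path from $S_2$ to $S_2$ using only such edges alternates $S_2,S_1,\dots,S_2$ and so visits $S_2$ once more than $S_1$ --- impossible when the classes have equal size. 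Without an argument of this kind your plan stalls at a deficit of $2$ in the hardest case; the rest of your outline goes through.
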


Proving this lemma includes involved casework, some of which is deferred to the Appendix.  First, we reduce our work by symmetry, arguing that we only need to consider instances of A and B2 from Figure \ref{fig:bad}.

\begin{cm}
Suppose we have a countexample to Theorem \ref{thm:main} on $n$ vertices containing an instance of B2 from Figure \ref{fig:bad}.  Then there also exists a counterexample  to Theorem \ref{thm:main}  on $n$ vertices containing an instance of B1 from Figure \ref{fig:bad}.
\end{cm}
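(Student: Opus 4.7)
The plan is to exploit the fact that the circlet-inequality sum $\sum_{i=1}^d c_i t_i$ depends only on the edge-length multiset of a tour, so any relabeling of $[n]$ that preserves edge lengths sends counterexamples to counterexamples of exactly the same value. The strategy is to find a symmetry of the circulant vertex arrangement that maps the asymmetric Type~B2 configuration onto its horizontal mirror image, Type~B1.

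Concretely, given an assumed B2 configuration anchored at $u$, I would apply the reflection $\sigma: v \mapsto 2u+1 - v \pmod{n}$. Since $\sigma(i) - \sigma(j) \equiv -(i-j) \pmod{n}$, every edge length $\ell_{i,j}$ is preserved, so each $t_i$ is preserved; thus $\sigma$ carries the assumed Hamiltonian cycle to another Hamiltonian cycle on $[n]$ that is still a counterexample to Theorem~\ref{thm:main} with the same (strictly less than $n-2$) value of $\sum c_i t_i$.

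The key computation is that $\sigma$ acts on the four window vertices by the swaps $u \leftrightarrow u+1$ and $u+d \leftrightarrow u+1+d$ (using $-d \equiv d \pmod n$ since $n=2d$); in other words, it exchanges the two columns of the window while fixing the horizontal orientation. Under this map, the top edge $\{u,u+1\}$ and bottom edge $\{u+d, u+1+d\}$ are fixed set-theoretically, while the right vertical edge $\{u+1, u+1+d\}$ is carried to the left vertical edge $\{u, u+d\}$. Hence the three B2 edges $\{u,u+1\}$, $\{u+1, u+1+d\}$, $\{u+1+d, u+d\}$ are sent to $\{u, u+1\}$, $\{u, u+d\}$, $\{u+d, u+1+d\}$, which is precisely the Type~B1 configuration anchored at $u$. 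This produces the required B1 counterexample on $n$ vertices.

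No serious obstacle arises here: once the correct reflection is guessed, verifying it is a one-line calculation, and the conceptual content is just that B1 and B2 are mirror images along the horizontal axis of the window while circulant edge-length statistics are invariant under that reflection. The real work in proving Lemma~\ref{lm2} lies elsewhere, in handling the remaining cases (Type~A and the unreduced Type~B2) via the contraction and bookkeeping arguments previewed in Section~\ref{sec:mainpf}.
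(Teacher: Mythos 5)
Your proof is correct and takes essentially the same approach as the paper: both apply an edge-length-preserving reflection of the vertex labels (the paper uses $i\mapsto n-i$, you use $v\mapsto 2u+1-v$, which differ only by a rotation) to carry a B2 configuration to a B1 configuration while keeping the tour a counterexample. Your version is slightly more explicit in that it pins the resulting B1 to the same anchor $u$, but the underlying idea is identical.
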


\begin{proof}
Any counterexample with an instance of B1 can be viewed as a counterexample with an instance of B2 by relabeling the vertices.  The circulant symmetry of the $c_i$ means that we can relabel vertex $i$ as $n-i$ for all $i=1, ..., n$, and then $$\ell_{i, j} = \min \{|i-j|, n-|i-j|\} = \min \{|(n-i)-(n-j)|, n-|(n-i)-(n-j)|\}  = \ell_{n-i, n-j}.$$   Then a counterexample with a B1 implies a counterexample with a B2, and vice versa. \hfill
\end{proof}

\begin{cm}
Suppose we have a countexample to Theorem \ref{thm:main} on $n$ vertices containing an instance of A or B2 from Figure \ref{fig:bad}. Without loss of generality, we may assume that $u=1$.  
\end{cm}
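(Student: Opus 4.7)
The plan is to invoke the circulant (rotational) symmetry of the problem. The cost coefficients $c_i$ depend only on edge lengths, and edge lengths $\ell_{i,j} = \min\{|i-j|, n-|i-j|\}$ depend only on the difference $(i-j) \bmod n$. Consequently, any cyclic relabeling of the vertex set $[n]$ preserves both the sum $\sum_{i=1}^{d} c_i t_i$ and the property of being a Hamiltonian cycle, while mapping a window at one vertex to a window at any other vertex.

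Concretely, given a counterexample on $[n]$ containing an instance of A or B2 at some vertex $u \in [n]$, I would define the relabeling $\pi : [n] \to [n]$ by $\pi(i) = ((i - u) \bmod n) + 1$. Then $\pi$ is a bijection on $[n]$, and for any two vertices $i, j$,
\[
\ell_{\pi(i), \pi(j)} = \min\{|\pi(i)-\pi(j)|, \, n - |\pi(i)-\pi(j)|\} = \min\{|i-j|, \, n-|i-j|\} = \ell_{i,j},
\]
since $\pi(i) - \pi(j) \equiv i - j \pmod n$. Thus edge lengths, and therefore the values $t_1, \ldots, t_d$ and the sum $\sum_{i=1}^{d} c_i t_i$, are unchanged. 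The image of the original Hamiltonian cycle under $\pi$ is still a Hamiltonian cycle on $[n]$, still satisfies $\sum_{i=1}^{d} c_i t_i < n-2$, and the bad structure originally at $u$ (say, the three edges defining a Type A configuration at $u$) is mapped to the three corresponding edges at $\pi(u) = 1$, giving a Type A (or B2) configuration at $u=1$.

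The only thing to check carefully is that the cyclic shift respects the ``$u+d$'' and ``$u+d+1$'' offsets appearing in the statements of the configurations, but this is immediate since $\pi(u+d) = d+1 = 1+d = \pi(u)+d$ (all arithmetic mod $n$), and similarly $\pi(u+d+1) = d+2$. Hence the counterexample with the bad structure at $u$ becomes a counterexample with the identical bad structure at $u=1$, justifying the assumption $u=1$ in subsequent casework. There is no real obstacle here; the argument is a direct application of the circulant symmetry, analogous to the previous claim (which used the reflection $i \mapsto n-i$ to identify B1 with B2).
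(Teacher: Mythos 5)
Your proposal is correct and matches the paper's proof: both use the cyclic relabeling $i \mapsto i-(u-1) \pmod n$ and the observation that $\ell_{i,j}$ depends only on $(i-j) \bmod n$, so edge lengths, the $t_i$, and the configuration at $u$ are all carried to $u=1$. The extra checks you include (that the $+d$ and $+d+1$ offsets are respected) are fine but not a departure from the paper's argument.
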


\begin{proof}
Circulant symmetry means that we can relabel vertex $i$ as $i-(u-1)$ for $i=1, ..., n,$ and then  $$\ell_{i, j} = \min \{|i-j|, n-|i-j|\} = \min \{|(i-u+1)-(j-u+1)|, n-|(i-u+1)-(j-u+1)|\}  = \ell_{i-u+1, j-u+1}.$$ \hfill
\end{proof}

Hence, to prove Lemma \ref{lm2}, we need only show that a minimal counterexample cannot have either of the edge sequences shown in Figure \ref{fig:bad2}.

\begin{figure}[h!]
\begin{center}
\begin{tikzpicture}[scale=0.7]
\tikzset{vertex/.style = {shape=circle,draw,minimum size=3.5em}}
\tikzset{edge/.style = {->,> = latex'}}
\tikzstyle{decision} = [diamond, draw, text badly centered, inner sep=3pt]
\tikzstyle{sq} = [regular polygon,regular polygon sides=4, draw, text badly centered, inner sep=3pt]
\node[vertex] (a) at  (0, 2) {$1$};
\node[vertex] (b) at  (3, 2) {$2$};
\node[vertex] (c) at  (0, -1) {$d+1$};
\node[vertex] (d) at  (3, -1) {$d+2$};

\node[vertex] (e) at (5.5, 1) {$j$};
\node[vertex] (f) at (8, 0) {$k$};

\draw[line width=2pt]  (a) -- (b);
\draw[line width=2pt]  (b) -- (d);
\draw[line width=2pt]  (c) -- (d);

\draw[line width=2pt]  (a) to [out=45, in=115] (e);

\draw[line width=2pt]  (c) to [out=-45, in=-115] (f);
\end{tikzpicture}
\hspace{10mm}
\begin{tikzpicture}[scale=0.7]
\tikzset{vertex/.style = {shape=circle,draw,minimum size=3.5em}}
\tikzset{edge/.style = {->,> = latex'}}
\tikzstyle{decision} = [diamond, draw, text badly centered, inner sep=3pt]
\tikzstyle{sq} = [regular polygon,regular polygon sides=4, draw, text badly centered, inner sep=3pt]
\node[vertex] (a) at  (0, 2) {$1$};
\node[vertex] (b) at  (3, 2) {$2$};
\node[vertex] (c) at  (0, -1) {$d+1$};
\node[vertex] (d) at  (3, -1) {$d+2$};

\node[vertex] (e) at (5.5, 1) {$j$};
\node[vertex] (f) at (8, 0) {$k$};

\draw[line width=2pt]  (a) -- (b);
\draw[line width=2pt]  (a) -- (c);
\draw[line width=2pt]  (b) -- (d);

\draw[line width=2pt]  (d) to (e);

\draw[line width=2pt]  (c) to [out=-45, in=-115] (f);
\end{tikzpicture}
\end{center}
\caption{Proving Lemma \ref{lm2} requires showing that neither of these sequences of edges can occur in a minimal counterexample to Theorem \ref{thm:main}.}\label{fig:bad2}\end{figure}
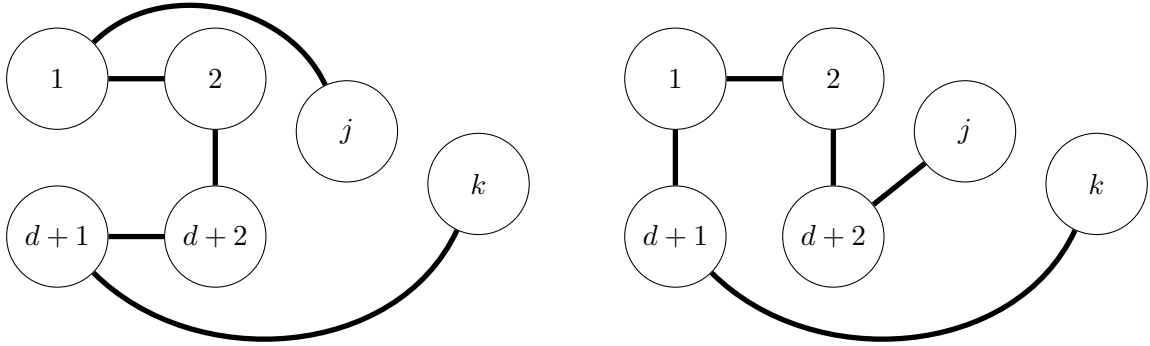

The strategy for both is the same: we show that if we contract the sequence of 5 edges from $j$ to $k$, we attain a counterexample on $4$ fewer vertices.  Figure \ref{fig:cont} shows this contraction process explicitly for the leftmost sequence of edges in Figure \ref{fig:bad2}, which proceeds in two steps.
\begin{enumerate}
\item Delete the vertices $1, 2, d+1, d+2$ and replace the edges $\{j, 1\}, \{1, 2\}, \{2, d+2\}, \{d+2, d+1\}, \{d+1, k\}$ with a single edge $\{j, k\}$, and 
\item  we relabel every other vertex $s$ as $$\begin{cases} s-2, & s\leq d \\ s-4, & s>d.\end{cases}$$
\end{enumerate}

\begin{figure}[h!]
\begin{center}

\begin{tikzpicture}[scale=0.5]
\tikzset{vertex/.style = {shape=circle,draw,minimum size=2.3em}}
\tikzset{edge/.style = {->,> = latex'}}
\tikzstyle{decision} = [diamond, draw, text badly centered, inner sep=3pt]
\tikzstyle{sq} = [regular polygon,regular polygon sides=4, draw, text badly centered, inner sep=3pt]
\node[vertex] (a) at  (0, 2) {$1$};
\node[vertex] (b) at  (3, 2) {$2$};
\node[vertex] (c) at  (6, 2) {$3$};
\node[vertex] (d) at  (9, 2) {$4$};
\node[vertex] (e) at  (12, 2) {$5$};
\node[vertex] (f) at  (15, 2) {$6$};

\node[vertex] (a1) at  (0, -1) {$7$};
\node[vertex] (b1) at  (3, -1) {$8$};
\node[vertex] (c1) at  (6, -1) {$9$};
\node[vertex] (d1) at  (9, -1) {$10$};
\node[vertex] (e1) at  (12, -1) {$11$};
\node[vertex] (f1) at  (15, -1) {$12$};

\node (a2) at  (0, 2) {};
\node[vertex] (c2) at  (6, -6) {$1$};
\node[vertex] (d2) at  (9, -6) {$2$};
\node[vertex] (e2) at  (12, -6) {$3$};
\node[vertex] (f2) at  (15, -6) {$4$};

\node[vertex] (c3) at  (6, -9) {$5$};
\node[vertex] (d3) at  (9, -9) {$6$};
\node[vertex] (e3) at  (12, -9) {$7$};
\node[vertex] (f3) at  (15, -9) {$8$};

\draw[line width=2pt]  (a) to (b);
\draw[line width=2pt]  (b1) to (b);
\draw[line width=2pt]  (a1) to (b1);

\draw[line width=2pt]  (a) to [out=45, in=135] (d);
\draw[line width=2pt]  (a1) to [out=-35, in=-145] (e1);

\draw[line width=1.5pt, densely dotted]  (c) to (d);
\draw[line width=1.5pt, densely dotted]  (c1) to (d1);
\draw[line width=1.5pt, densely dotted]  (c) to (d1);

\draw[line width=1.5pt, densely dotted]  (e) to (f);
\draw[line width=1.5pt, densely dotted]  (e) to (e1);
\draw[line width=1.5pt, densely dotted]  (f1) to (f);

\draw[line width=1.5pt, densely dotted]  (c1) to [out=25, in=155] (f1);

\draw[line width=2pt]  (d2) to (e3);

\draw[line width=1.5pt, densely dotted]  (c2) to (d2);
\draw[line width=1.5pt, densely dotted]  (c3) to (d3);
\draw[line width=1.5pt, densely dotted]  (c2) to (d3);

\draw[line width=1.5pt, densely dotted]  (e2) to (f2);
\draw[line width=1.5pt, densely dotted]  (e2) to (e3);
\draw[line width=1.5pt, densely dotted]  (f3) to (f2);

\draw[line width=1.5pt, densely dotted]  (c3) to [out=25, in=155] (f3);
\end{tikzpicture}
\end{center}
\caption{An example contraction used to prove Lemma \ref{lm2}.  Notice the addition of the edge $\{2, 7\}$ (which corresponds to an edge between vertices 4 and 11 pre-contraction).  The dashed edges do not change.}\label{fig:cont}\end{figure}
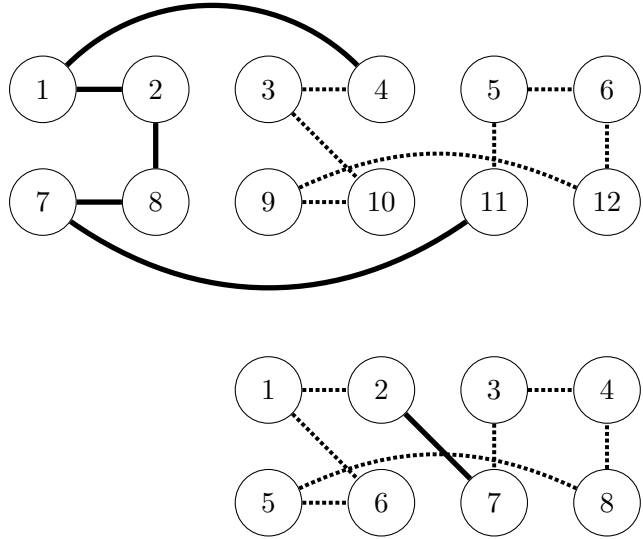

In the rightmost sequence of edges in Figure \ref{fig:bad2}, we proceed analogously, except replacing the edges  $\{j, d+2\}, \{d+2, 2\}, \{2, 1\}, \{1, d+1\}, \{d+1, k\}$ with $\{j, k\}$ in the first step.  

Note that, in both cases, we attain a feasible tour on 4 fewer vertices.  To show that this smaller instance is also a counterexample requires showing, during contraction, we decrease the cost by at least 4.  

Throughout we denote by $s'$ the new label of vertex $s$ after contraction:
$$s'=\begin{cases} s-2, & 2<s\leq d \\ s-4, & s>d+2.\end{cases}$$  We let $$c_{i, j}:=c_{\ell_{i, j}}=\begin{cases} \ell_{i, j},& \ell_{i, j} \text{ odd} \\ d-\ell_{i, j}, & \ell_{i, j} \text{ even} \end{cases}$$ denote the cost of edge $\{i, j\},$ which has length $\ell_{i, j},$ before contraction.  We denote the length of an edge $\{i, j\}$ in the contracted graph $$\ell'_{i, j} = \min\{|i-j|, n-4-|i-j|\}$$ (where  $i$ and $j$ are labels of vertices \emph{in the contracted graph}).  Similarly, we let $$c'_{i, j}=c'_{\ell_{i, j}}=\begin{cases} \ell'_{i, j},& \ell'_{i, j} \text{ odd} \\ d-2-\ell'_{i, j}, & \ell'_{i, j} \text{ even} \end{cases}$$ denote the cost of edge $\{i, j\}$ in the contracted graph.  Again, in this notation,   $i$ and $j$ are labels of vertices \emph{in the contracted graph}.

Note that, by design, any vertex $s$ becomes a vertex $s'$ of the same parity in the contracted graph, and that the length of an edge does not change significantly.  This means, when we contract, we do not need to worry about an edge of odd length becoming an edge of even length, or vice versa, and thus potentially radically changing in cost.  In particular, we first show that, after  contraction, the cost of edges can only decrease. 
\begin{prop}\label{prop:dec}
Let $\{s, t\}$ be an edge with $s, t \notin \{1, 2, d+1, d+2\}.$ Then
$$\ell'_{s', t'} =  \begin{cases} \ell_{s, t}, & 3 \leq s, t \leq d \text{ OR } d+3\leq s, t \leq n \\ \ell_{s, t}-2, & \text{ otherwise. } \end{cases}$$  Moreover
$$c'_{s', t'} = \begin{cases}
 c_{s, t}, & \left(s, t \leq d \text{ OR } d+3\leq s, t\right) \text{ AND } s\not\equiv_2 t \\
 c_{s, t}, & \left( s\leq d \leq d+3 \leq t \text{ OR }  t\leq d \leq d+3 \leq s\right) \text{ AND } s\equiv_2 t \\
c_{s, t} - 2,& \text{ else}. \end{cases}$$
In particular, any edge $\{s, t\}$  with $s, t \notin \{1, 2, d+1, d+2\}$ either gets cheaper or remains the same cost after the contraction process.
\end{prop}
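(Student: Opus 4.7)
The plan is to prove Proposition \ref{prop:dec} by a direct case analysis on the positions of $s$ and $t$ relative to the four deleted vertices $\{1, 2, d+1, d+2\}$. The key observation driving the argument is that the relabeling $s \mapsto s'$ shifts every remaining vertex by either $2$ (when $3 \leq s \leq d$) or $4$ (when $d+3 \leq s \leq n$), both even, so $s' \equiv s \pmod{2}$ for every surviving vertex. Consequently the parity of the new edge length equals the parity of the old one, which lets me compare $c_{s,t}$ and $c'_{s',t'}$ cleanly via the piecewise definition without worrying about the cost formula switching branches.

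First I would handle the ``same half'' case: either $3 \leq s,t \leq d$ or $d+3 \leq s,t \leq n$. Here both vertices shift by the same amount, so $|s'-t'| = |s-t|$. Moreover $|s-t| \leq d-3 \leq d-2 = (n-4)/2$, so this quantity is the minimum in the length definition for both graphs, giving $\ell'_{s',t'} = \ell_{s,t}$. Note that on a single half the edge length equals $|s-t|$ exactly, so $\ell_{s,t}$ is even precisely when $s \equiv_2 t$. A parity split then yields: in the odd-length subcase ($s\not\equiv_2 t$) we get $c'_{s',t'} = \ell'_{s',t'} = c_{s,t}$; in the even-length subcase ($s \equiv_2 t$) we get $c'_{s',t'} = (d-2) - \ell_{s,t} = c_{s,t} - 2$. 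These match the first and third rows of the claimed piecewise formula.

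Next, for the ``different halves'' case, I may assume $3 \leq s \leq d$ and $d+3 \leq t \leq n$, so $s' = s-2$ and $t' = t-4$, giving $|t'-s'| = (t-s)-2$. A quick check of bounds shows that $(n-4)-|t'-s'| = n - (t-s) - 2$, so both branches of the $\min$ defining $\ell_{s,t}$ and $\ell'_{s',t'}$ drop by exactly $2$, and the minimum is realized on the same branch in the contracted graph. Hence $\ell'_{s',t'} = \ell_{s,t} - 2$. Since $n$ is even, the parity of $\ell_{s,t} = \min\{t-s,\,n-(t-s)\}$ agrees with the parity of $t-s$, which in turn matches $s \equiv_2 t$. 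In the odd-length subcase ($s \not\equiv_2 t$), $c'_{s',t'} = \ell_{s,t} - 2 = c_{s,t} - 2$; in the even-length subcase ($s \equiv_2 t$), $c'_{s',t'} = (d-2) - (\ell_{s,t}-2) = d - \ell_{s,t} = c_{s,t}$. These fill in the remaining two rows.

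The only delicate step I anticipate is the branch-tracking in the different-halves case, where one must verify that the min defining $\ell'$ is attained on the same side as the min defining $\ell$, so that length truly drops by exactly $2$ and not, for instance, by $n-4$ via the wrap-around branch. This is a short bound chase using the constraints $3 \leq s \leq d$ and $d+3 \leq t \leq n$, and I expect no real difficulty. The ``in particular'' claim then follows immediately, since in every case the cost change is either $0$ or $-2$.
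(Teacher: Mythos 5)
Your proposal is correct and follows essentially the same route as the paper's proof: the same group decomposition (both endpoints in $\{3,\dots,d\}$ or $\{d+3,\dots,n\}$ versus one in each), the same computation showing $|s'-t'|=|s-t|$ or $|s-t|-2$ with both branches of the $\min$ dropping uniformly, and the same parity-preservation observation that lets the cost formula stay in its branch. The only cosmetic difference is that you split by position first and parity second while the paper does the reverse, and your worry about which branch of the $\min$ is attained is moot once both arguments drop by exactly $2$.
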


For this proof, it is helpful to consider Figure \ref{fig:blocks}, where the vertices are placed in two groups $\{3, ..., d\}$ and $\{d+3, ..., n\}.$  The above proposition indicates that an edge within a group retains its cost if it is of odd length, and an edge between groups retains its cost if it is of even length.  Other edges -- those within groups and of even length, or those between groups and of odd length -- are lowered in cost by 2.

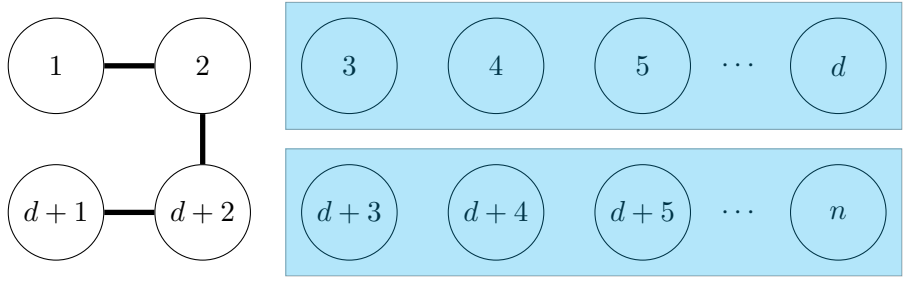
\begin{figure}[h!]
\begin{center}

\begin{tikzpicture}[scale=0.65]
\tikzset{vertex/.style = {shape=circle,draw,minimum size=3.3em}}
\tikzset{edge/.style = {->,> = latex'}}
\tikzstyle{decision} = [diamond, draw, text badly centered, inner sep=3pt]
\tikzstyle{sq} = [regular polygon,regular polygon sides=4, draw, text badly centered, inner sep=3pt]
\node[vertex] (a) at  (0, 2) {$1$};
\node[vertex] (b) at  (3, 2) {$2$};
\node[vertex] (c) at  (6, 2) {$3$};
\node[vertex] (d) at  (9, 2) {$4$};
\node[vertex] (e) at  (12, 2) {$5$};
\node (f) at  (14, 2) {$\cdots$};
\node[vertex] (g) at  (16, 2) {$d$};

\node[vertex] (a1) at  (0, -1) {$d+1$};
\node[vertex] (b1) at  (3, -1) {$d+2$};
\node[vertex] (c1) at  (6, -1) {$d+3$};
\node[vertex] (d1) at  (9, -1) {$d+4$};
\node[vertex] (e1) at  (12, -1) {$d+5$};
\node (f1) at  (14, -1) {$\cdots$};
\node[vertex] (g1) at  (16, -1) {$n$};

\draw[line width=2pt]  (a) to (b);
\draw[line width=2pt]  (b1) to (b);
\draw[line width=2pt]  (a1) to (b1);

\draw[fill=cyan, opacity=0.3] ($(c)+(-1.3, 1.3)$)  rectangle ($(g)-(-1.3, 1.3)$);

\draw[fill=cyan, opacity=0.3] ($(c1)+(-1.3, 1.3)$)  rectangle ($(g1)-(-1.3, 1.3)$);
\end{tikzpicture}
\end{center}
\caption{Placing the vertices into two groups.}\label{fig:blocks}\end{figure}

\begin{proof}
The first part of Proposition \ref{prop:dec} about edge length follows from Figure \ref{fig:blocks}: If $s, t$ are in the same group, then the edge between them does not change in length after contraction.  If they are in different groups, then the edge between them ``goes through'' either $\{1, 2\}$ or $\{d+1, d+2\},$ and after contraction, gets shorter by two.  More formally, if $s, t$ are in the same group, then $s'-s=t'-t$ so that $s'-t'=s-t$ and $|s-t| \leq d-3$ so that $$\ell'_{s', t'} = \min\{|s'-t'|, n-4-|s'-t'|\} = |s'-t'| = |s-t| = \min\{|s-t|, n-|s-t|\} = \ell_{s, t}.$$  Otherwise, without loss of generality let $s$ be in the top group so that $s<t$, $s'<t'$, $s'=s-2,$ and $t'=t-4.$  Then $$|s'-t'| = t'-s' = (t-4)-(s-2)=t-s-2=|s-t|-2,$$ and
$$n-4-|s'-t'| = n-4 - (|s-t|-2) = n - |s-t|-2.$$  Thus 
 $$\ell'_{s', t'} = \min\{|s'-t'|, n-4-|s'-t'|\} = \min\{|s-t|-2, n-|s-t|-2\}  = \min\{|s-t|, n-|s-t|\}-2  = \ell_{s, t}-2.$$

The statement about costs then uses the new lengths and new cost equation.  In two cases:
\begin{itemize}
\item If $s\not\equiv_2 t,$ then 
\begin{align*}
c'_{s', t'} =  \ell'_{s', t'}
 &= \begin{cases}  \ell_{s, t}, &  3 \leq s, t \leq d \text{ OR } d+3\leq s, t \leq n \\\ell_{s, t}-2, &  s\leq d \leq d+3 \leq t \text{ OR }  t\leq d \leq d+3 \leq s \end{cases}\\ 
 &= \begin{cases}  c_{s, t}, &  3 \leq s, t \leq d \text{ OR } d+3\leq s, t \leq n \\ c_{s, t}-2, &  s\leq d \leq d+3 \leq t \text{ OR }  t\leq d \leq d+3 \leq s \end{cases}. \end{align*} 
\item If $s\equiv_2 t,$ then 
\begin{align*}
c'_{s', t'} = d-2 - \ell'_{s', t'}
 &= \begin{cases}  d-2-\ell_{s, t}, &  3 \leq s, t \leq d \text{ OR } d+3\leq s, t \leq n \\ d-2 - (\ell_{s, t}-2), &  s\leq d \leq d+3 \leq t \text{ OR }  t\leq d \leq d+3 \leq s \end{cases}\\ 
 &= \begin{cases}  c_{s, t}-2, &  3 \leq s, t \leq d \text{ OR } d+3\leq s, t \leq n \\ c_{s, t}, &  s\leq d \leq d+3 \leq t \text{ OR }  t\leq d \leq d+3 \leq s \end{cases}. \end{align*} 
\end{itemize}

These cases give the claimed results. \hfill
\end{proof}

The only other change, after contraction, is that we contract five edges into one edge $\{j', k'\}.$  The following propositions account for the change in cost after these contractions.

\begin{prop}\label{prop:cont1}
Consider the first case, where we contract $\{j, 1\}, \{1, 2\}, \{2, d+2\}, \{d+2, d+1\}, \{d+1, k\}$ into $\{j', k'\}.$  Then 
\begin{equation}\label{eq:cont1}
c_{j, 1} + c_{1, 2} + c_{2, d+2} + c_{d+2, d+1} + c_{d+1, k} - c'_{j', k'} \geq 4.
\end{equation}
Hence the  cost of the tour resulting from the contraction goes down in cost by at least 4. 
\end{prop}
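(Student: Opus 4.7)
The five contracted edges have costs $c_{j, 1} + c_{1, 2} + c_{2, d+2} + c_{d+2, d+1} + c_{d+1, k}$. Because $d$ is even, the three middle edges are fixed: $c_{1, 2} = c_1 = 1$, $c_{2, d+2} = c_d = 0$, and $c_{d+2, d+1} = c_1 = 1$, contributing exactly $2$. So \eqref{eq:cont1} reduces to showing
$$c_{j, 1} + c_{d+1, k} \geq c'_{j', k'} + 2.$$
A preliminary observation is that $j, k \notin \{1, 2, d+1, d+2\}$: otherwise the five-edge path would either force a vertex to have degree $\geq 3$ or close into a subtour on at most five vertices, contradicting that we have a Hamiltonian cycle on $n \geq 8$ vertices (where $n \geq 8$ is supplied by Claim \ref{cm1}). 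Thus $j, k \in T \cup B$, where $T := \{3, \ldots, d\}$ and $B := \{d+3, \ldots, n\}$.

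My plan is to split into the four group-cases $(j, k) \in \{T, B\}^2$ and, within each, to subcase on the parities of $j$ and $k$. The location of each vertex determines its relevant length directly: if $j \in T$ then $\ell_{j, 1} = j - 1$, while if $j \in B$ then $\ell_{j, 1} = n - j + 1$, and analogously for $\ell_{d+1, k}$. Because the relabeling $s \mapsto s'$ shifts vertex labels by either $2$ or $4$ (both even), vertex parities are preserved, which forces $|j' - k'|$ and $|j - k|$ to share a parity. Consequently $\ell'_{j', k'} = \min\{|j' - k'|,\ (n - 4) - |j' - k'|\}$ has the same parity as $|j - k|$, and the cost formulas ($c_\ell = \ell$ for $\ell$ odd, $c_\ell = d - \ell$ for $\ell$ even, and analogously $c'_\ell = \ell$ or $d - 2 - \ell$) combine uniformly in every subcase.

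Inside each group-case I would compute $|j' - k'|$ and $(n - 4) - |j' - k'|$ explicitly to decide which branch realizes $\ell'_{j', k'}$; in the two mixed-group cases ($T \times B$ and $B \times T$) this produces one further subcase according to whether $|j - k| \leq d$. Plugging into the parity-dependent cost formulas then reduces each subcase to an identity or a one-line linear inequality. For instance, when $j, k \in T$ are both odd with $k > j$, the quantity simplifies to $c_{j, 1} + c_{d+1, k} - c'_{j', k'} = 2 + 2(k - j) \geq 6$, whereas when $j, k \in T$ are both odd with $k < j$ the same quantity equals exactly $2$, giving equality. The remaining subcases all collapse to either a clean equality at $2$ or a strict inequality driven by the smallest parity-consistent value of $j$ or $k$ (e.g.\ $2j - 2 \geq 6$ once $j \geq 4$). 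The main obstacle is the bookkeeping: correctly tracking which branch of the $\min$ realizes $\ell'_{j', k'}$ in each subcase and keeping the parity accounting straight. The even-ness of $d$ together with the parity preservation under the contraction map is precisely what causes the arithmetic to land on the boundary $\geq 2$ in the tight subcases.
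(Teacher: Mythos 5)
Your proposal is correct and follows essentially the same route as the paper: reduce via $c_{1,2}+c_{2,d+2}+c_{d+2,d+1}=2$ to showing $c_{j,1}+c_{d+1,k}-c'_{j',k'}\geq 2$, then exhaust the cases determined by the parities of $j,k$ and their membership in $\{3,\dots,d\}$ versus $\{d+3,\dots,n\}$, exactly as in the paper's appendix (which organizes by parity first rather than by group first). Your sample computations (e.g.\ $2+2(k-j)$ for $j,k$ odd in the top group with $k>j$, and equality at $2$ when $k<j$) agree with the paper's formulas.
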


\begin{prop}\label{prop:cont2}
Consider the second case, where we contract $\{j, d+2\}, \{d+2, 2\}, \{2, 1\}, \{1, d+1\}, \{d+1, k\}$ into $\{j', k'\}.$  Then 
\begin{equation} \label{eq:cont2}
c_{j, d+2} + c_{d+2, 2} + c_{2, 1} + c_{1, d+1} + c_{d+1, k} - c'_{j', k'} \geq 2
\end{equation}
and is even.  Moreover, if
$$c_{j, d+2} + c_{d+2, 2} + c_{2, 1} + c_{1, d+1} + c_{d+1, k} - c'_{j', k'} =2,$$ then there must have been at least one edge $\{s, t\}$ in the cycle with  $s, t \notin \{1, 2, d+1, d+2\}$ such that $c'_{s', t'} = c_{s, t}-2.$
In either case, the cost of the tour resulting from the contraction goes down in cost by at least 4. 
\end{prop}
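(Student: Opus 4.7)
The plan is to combine a (routine but tedious) case analysis on the positions of $j$ and $k$ with a parity/coloring argument on what remains of the tour after contraction. Writing $\alpha = \ell_{j, d+2}$ and $\beta = \ell_{d+1, k}$, the three known contracted edges contribute $c_{d+2, 2} + c_{2, 1} + c_{1, d+1} = 0 + 1 + 0 = 1$, so the proposition reduces to showing $c_{j, d+2} + c_{d+1, k} - c'_{j', k'} \geq 1$, together with an ``extra reduced-cost edge'' whenever equality holds.

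For the evenness claim, observe that since $d$ is even we have $c_\ell \equiv \ell \pmod{2}$ for every length $\ell$. Combined with the parity identities $j \equiv \alpha \pmod{2}$ and $k \equiv 1 + \beta \pmod{2}$ (using that $d+2$ is even and $d+1$ is odd), this forces $\ell'_{j', k'} \equiv j - k \equiv \alpha + \beta + 1 \pmod{2}$. Substituting these parities into the full left-hand side then yields $0 \pmod{2}$.

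For the lower bound, I would enumerate by which of the groups $\{3, ..., d\}$ and $\{d+3, ..., n\}$ contain $j$ and $k$ (four subcases), and within each by the parities of $(\alpha, \beta)$ (four subsubcases). Using Proposition \ref{prop:dec} to read off $\ell'_{j', k'}$ from the positions of $j', k'$, and the cost formula to evaluate costs, a direct check confirms $c_{j, d+2} + c_{d+1, k} - c'_{j', k'} \geq 1$ in every case (total $\geq 2$). The bookkeeping also identifies the equality cases: they arise in exactly four configurations, each prescribing a specific parity pattern for $(\alpha, \beta)$ together with either a bound like $\alpha + \beta \leq d-2$ or $\alpha + \beta \geq d+2$ (when $j$ and $k$ sit in different groups) or $|\alpha - \beta| \geq 3$ (when they sit in the same group).

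The critical observation for equality is a coloring argument. Define $\Phi(v) \in \{0, 1\}$ as the sum modulo $2$ of the parity of $v$ and the indicator that $v \in \{d+3, ..., n\}$. A direct check of the four equality configurations shows $\Phi(j) = \Phi(k) = 0$ in all of them. Meanwhile, unpacking Proposition \ref{prop:dec} shows that a non-contracted edge $\{s, t\}$ satisfies $c'_{s', t'} = c_{s, t} - 2$ precisely when $\Phi(s) = \Phi(t)$. Removing the five contracted edges from the Hamiltonian cycle leaves a path from $j$ to $k$ on the $n-4$ vertices outside $\{1, 2, d+1, d+2\}$ with $n - 5$ edges, and $n - 5$ is odd since $n$ is divisible by $4$. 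If every edge on this path satisfied $\Phi(s) \neq \Phi(t)$, then $\Phi$ would strictly alternate, forcing $\Phi(k) \neq \Phi(j)$ and contradicting $\Phi(j) = \Phi(k)$. Hence at least one non-contracted edge must drop in cost by $2$, bringing the total cost decrease to at least $4$. The main obstacle is the bookkeeping of the case analysis; the coloring argument, once set up, is very short.
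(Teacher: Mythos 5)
Your proposal is correct and follows essentially the same route as the paper: the same reduction to $c_{j,d+2}+c_{d+1,k}-c'_{j',k'}\geq 1$ after noting the three interior edges contribute $0+1+0$, the same parity argument for evenness, the same (deferred but routine) casework identifying the four equality configurations, and the same bipartition of the remaining vertices --- your $\Phi$ is exactly the paper's $S_1/S_2$ split, and your characterization of the cost-dropping edges as those with $\Phi(s)=\Phi(t)$ matches Proposition \ref{prop:dec}. The only difference is in the last step, where you derive the contradiction from the odd number ($n-5$) of edges on the $j$--$k$ path forcing $\Phi(j)\neq\Phi(k)$, whereas the paper counts visits to four equal-sized vertex classes in an auxiliary graph $G^{\circ}$; your version is a slightly leaner packaging of the same idea.
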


Propositions \ref{prop:cont1} and  \ref{prop:cont2} imply that a tour with any of the structures indicated in Lemma \ref{lm2}
and Figure \ref{fig:bad} can be contracted to attain a tour on 4 fewer vertices, and whose aggregate cost is at least 4 cheaper.  They thus complete the proof of Lemma \ref{lm2}.

For the sake of full precision we provide analytic formulas for $c_{j, 1} + c_{1, 2} + c_{2, d+2} + c_{d+2, d+1} + c_{d+1, k} - c'_{j', k'}$ and $c_{j, d+2} + c_{d+2, 2} + c_{2, 1} + c_{1, d+1} + c_{d+1, k} - c'_{j', k'}.$  Doing so, however, involves substantial casework.  We thus defer it to the appendix.  Up to that casework, the propositions follow quickly.

\begin{proof}[Proof (of Proposition \ref{prop:cont1})]
Note that $$c_{j, 1} + c_{1, 2} + c_{2, d+2} + c_{d+2, d+1} + c_{d+1, k} - c'_{j', k'} = c_{j, 1}  + c_{d+1, k} - c'_{j', k'} + 2.$$ Casework in the appendix shows that $$c_{j, 1}  + c_{d+1, k} - c'_{j', k'} \geq 2,$$ which completes the proof. \hfill
\end{proof}

\begin{proof}[Proof (of Proposition \ref{prop:cont2})]
Note that $$c_{j, d+2} + c_{d+2, 2} + c_{2, 1} + c_{1, d+1} + c_{d+1, k} - c'_{j', k'} = c_{j, d+2} + c_{d+1, k} - c'_{j', k'} + 1.$$ 
To show that this equation is even, we want to show that $c_{j, d+2} + c_{d+1, k} - c'_{j', k'}$ is odd.  This  follows because $$c_{j, d+2} \equiv_2 c_{d+1, k} \text{ if and only if } c'_{j', k'} \equiv_2 1.$$  (This can also be seen by considering the four cases depending on $j$ and $k$s individual parity.)

 Casework in the appendix shows that $$c_{j, d+2} + c_{d+1, k} - c'_{j', k'} \geq 1,$$ and that equality can hold in exactly four cases: 
\begin{itemize}
\item $j, k$ are even and $j, k \leq d$
\item $j, k$ are odd and $j, k \geq d+3$
\item $j$ is odd, $k$ is even, $j\geq d+3$, and $k\leq d$
\item $j$ is even, $k$ is odd, $j\leq d$, and $k\geq d+3.$
\end{itemize}

Recall from Proposition \ref{prop:dec} that, for an edge $\{s, t\}$ with $s, t \notin \{1, 2, d+1, d+2\},$ either $c'_{s', t'}=c_{s, t}$ or $c'_{s', t'}=c_{s, t}-2$.  The existence of a single edge where the cost decreases is sufficient to complete the proof of Proposition \ref{prop:cont2}.  Hence, we are concerned only with the case where $c'_{s', t'}=c_{s, t}$ for every edge $\{s, t\}$ in the tour (outside of  $\{j, d+2\}, \{d+2, 2\}, \{2, 1\}, \{1, d+1\}, \{d+1, k\}$).  From Proposition  \ref{prop:dec}, these are exactly the edges $\{s, t\}$ where 
\begin{itemize}
\item $\left(s, t \leq d \text{ OR } d+3\leq s, t\right) \text{ AND } s\not\equiv_2 t$
\item $  \left( s\leq d \leq d+3 \leq t \text{ OR }  t\leq d \leq d+3 \leq s\right) \text{ AND } s\equiv_2 t.$
\end{itemize}

Referring again to the groups indicated in Figure \ref{fig:blocks}, these are exactly the within-group edges of odd length and the across-group edges of even length. We now break the vertices $S=\{3, 4, ..., d, d+3, d+4, ..., n-1, n\}$ into four groups, as indicated in Figure \ref{fig:groups}: $$\{v\in S: v\leq d, v\text{ odd}\}, \{v\in S: v\leq d, v\text{ even}\}, \{v\in S: v> d, v\text{ odd}\}, \text{ and }\{v\in S: v> d, v\text{ even}\}.$$ We let $S_1$ denote the groups where $\{v\in S: v\leq d, v\text{ odd}\}$ and $\{v\in S: v> d, v\text{ even}\}.$  We let $S_2$ denote the other groups.  Let $G^{\circ}$ be the graph shown in Figure \ref{fig:groups}, where there is one vertex for each group, and edges between groups in opposite sets $S_1, S_2.$

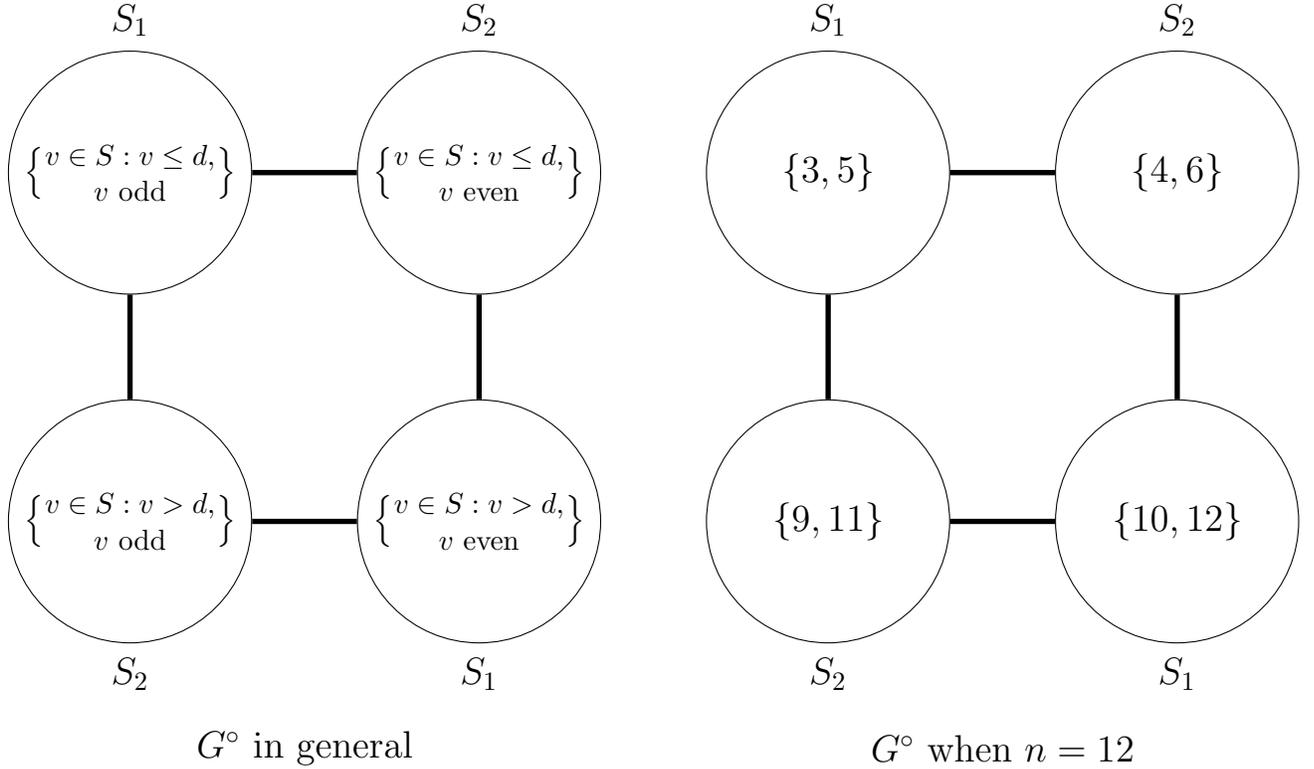
\begin{figure}[h!]
\begin{center}
\hspace{-5mm}
\begin{tikzpicture}[scale=0.58]
\tikzset{vertex/.style = {shape=circle,draw,minimum size=8.4em}}
\tikzset{edge/.style = {->,> = latex'}}
\tikzstyle{decision} = [diamond, draw, text badly centered, inner sep=3pt]
\tikzstyle{sq} = [regular polygon,regular polygon sides=4, draw, text badly centered, inner sep=3pt]
\node[vertex] (a) at  (0, 2) {$\left\{ \parbox[c]{23mm}{\centering $v\in S: v\leq d,$ $v \text{ odd}$} \right\}$};
\node[vertex] (b) at  (8, 2) {$\left\{ \parbox[c]{23mm}{\centering $v\in S: v\leq d,$ $v \text{ even}$} \right\}$};
\node[vertex] (a1) at  (0, -6) {$\left\{ \parbox[c]{23mm}{\centering $v\in S: v> d,$ $v \text{ odd}$} \right\}$};
\node[vertex] (b1) at  (8, -6) {$\left\{ \parbox[c]{23mm}{\centering $v\in S: v> d,$ $v \text{ even}$} \right\}$};
\node (a0) at (0, 5.5) {\Large{$S_1$}};
\node (b0) at (8, 5.5) {\Large{$S_2$}};
\node (a0) at (0, -9.5) {\Large{$S_2$}};
\node (b0) at (8, -9.5) {\Large{$S_1$}};
\node (label) at (4, -11.2) {\Large{$G^{\circ}$ in general}};

\draw[line width=2pt]  (a) to (b);
\draw[line width=2pt]  (a) to (a1);
\draw[line width=2pt]  (b1) to (b);
\draw[line width=2pt]  (a1) to (b1);

\node[vertex] (c) at  (16, 2) {\Large{$\left\{ 3, 5 \right\}$}};
\node[vertex] (d) at  (24, 2) {\Large{$\left\{ 4, 6 \right\}$}};
\node[vertex] (c1) at  (16, -6) {\Large{$\left\{ 9, 11 \right\}$}};
\node[vertex] (d1) at  (24, -6) {\Large{$\left\{ 10, 12\right\}$}};
\node (c0) at (16, 5.5) {\Large{$S_1$}};
\node (d0) at (24, 5.5) {\Large{$S_2$}};
\node (c0) at (16, -9.5) {\Large{$S_2$}};
\node (d0) at (24, -9.5) {\Large{$S_1$}};
\node (label1) at (20, -11.2) {\Large{$G^{\circ}$ when $n=12$}};

\draw[line width=2pt]  (c) to (d);
\draw[line width=2pt]  (c) to (c1);
\draw[line width=2pt]  (d1) to (d);
\draw[line width=2pt]  (c1) to (d1);

\end{tikzpicture}
\end{center}
\caption{Placing the vertices into four groups to attain the graph $G^\circ$: in general (on the left) and explicitly when $n=12$ (on the right)}\label{fig:groups}\end{figure}

Note that all four groups have equal size $(n-4)/4.$  The edges between groups in $G^{\circ}$, moreover, indicate exactly the cases where $c_{s, t}=c'_{s', t'}.$  For example, consider an edge $\{s, t\}$ where $s, t \leq d$ and $s\not\equiv_2 t.$  This corresponds from moving between the groups $\{v\in S: v\leq d, v\text{ odd}\}$ and $\{v\in S: v\leq d, v\text{ even}\},$ which is indicated by the top horizontal edge.

Note also that the only cases where $$c_{j, d+2} + c_{d+1, k} - c'_{j', k'} = 1$$ correspond to cases where both $j, k\in S_2.$  Suppose there exists a Hamiltonian tour (in the original, precontracted graph on $n$ vertices) where $c_{j, d+2} + c_{d+1, k} - c'_{j', k'} = 1$ and $c'_{s', t'}=c_{s, t}$ for every other edge.  That is, consider the only potential type of tour whose cost does not go down by at least 4 after contraction.

We trace the $n-4$ vertices that connect $j$ to $k$ through $S$ in such a tour. Tracing the vertices we visit in the  graph $G^{\circ}$ corresponds to a walk in $G^{\circ}$ that:
\begin{enumerate}
\item Starts and ends at one of the $S_2$ vertices and (since $c_{j, d+2} + c_{d+1, k} - c'_{j', k'} = 1$),  
\item Visits each of the four nodes in $G^{\circ}$ an equal number of times (since each node in $G^{\circ}$ has an equal number of vertices),
\item and only uses the four edges in $G^{\circ}$.  I.e., the walk never uses an edge between the two $S_2$ vertices, or an edge between the two $S_1$ vertices (since we only use  edges where $c_{s, t}=c'_{s', t'},$ which are exactly the edges in  $G^{\circ}$).
\end{enumerate}

No such walk can exist: The first and third criteria indicate that the traced walk will look like $S_2, S_1, S_2, S_1, ...., S_2, S_1, S_2.$  Hence, if it visits vertices in $S_1$ $k$ times, it visits vertices in $S_2$ $k+1$ times. Thus we will never satisfy the second criteria, that we visit exactly as many $S_1$ vertices as we visit $S_2$ vertices.
\hfill
\end{proof}

\section{Inequality (\ref{eq:main}) is Facet-Defining}\label{sec:facet}

In the previous two sections, we showed that  the circlet inequalities were  valid for the TSP. We now also prove that it is facet defining for the Symmetric Traveling Salesman Polytope  $STSP(n)$.  Recall that $\chi_H\in\{0, 1\}^{|E|}$ denotes the incidence vector of a Hamiltonian cycle $H$ on $K_n$.  In terms of these incidence vectors, $STSP(n)$ is  $$STSP(n)=\text{conv}\{\chi_H: H \text{ is a Hamiltonian cycle on } K_n\}\subset \R_{\geq 0}^E.$$  To show that a valid TSP inequality is facet-defining for $STSP(n)$, we follow Theorem 3.7 in Naddef and Rinaldi \cite{Nad92}: 
The dimension of $STSP(n)$ is $|E|-|V|=\f{n(n-3)}{2};$ to show that a valid inequality is facet defining for the TSP, we must thus find $n(n-3)/2$ Hamiltonian cycles for which the circlet inequality is tight and whose incidence vectors are linearly independent.

To show that this inequality is valid, we consider $d-1$ distinct types of tours.  These tours will contain edges of length 1 and $d$, and up to one edge of a different length $k$.  We will index the tours by this extra edge length, and count the number of linearly independent tours of each type.  For intuition, consider the case when $n=8$.  We consider three types of tours shown in Figure \ref{fig:8case}.

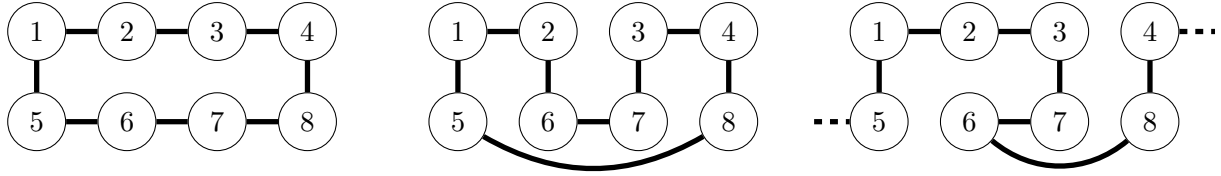
\begin{figure}[h!]
\begin{center}
\begin{tikzpicture}[scale=0.4]
\tikzset{vertex/.style = {shape=circle,draw,minimum size=2em}}
\tikzset{edge/.style = {->,> = latex'}}
\tikzstyle{decision} = [diamond, draw, text badly centered, inner sep=3pt]
\tikzstyle{sq} = [regular polygon,regular polygon sides=4, draw, text badly centered, inner sep=3pt]
\node[vertex] (a) at  (0, 2) {$1$};
\node[vertex] (b) at  (3, 2) {$2$};
\node[vertex] (c) at  (6, 2) {$3$};
\node[vertex] (d) at  (9, 2) {$4$};

\node[vertex] (a1) at  (0, -1) {$5$};
\node[vertex] (b1) at  (3, -1) {$6$};
\node[vertex] (c1) at  (6, -1) {$7$};
\node[vertex] (d1) at  (9, -1) {$8$};

\draw[line width=2pt]  (a) to (b);
\draw[line width=2pt]  (c) to (b);
\draw[line width=2pt]  (c) to (d);
\draw[line width=2pt]  (a1) to (b1);
\draw[line width=2pt]  (c1) to (b1);
\draw[line width=2pt]  (c1) to (d1);
\draw[line width=2pt]  (a) to (a1);
\draw[line width=2pt]  (d) to (d1);

\node[vertex] (a2) at  (14, 2) {$1$};
\node[vertex] (b2) at  (17, 2) {$2$};
\node[vertex] (c2) at  (20, 2) {$3$};
\node[vertex] (d2) at  (23, 2) {$4$};

\node[vertex] (a3) at  (14, -1) {$5$};
\node[vertex] (b3) at  (17, -1) {$6$};
\node[vertex] (c3) at  (20, -1) {$7$};
\node[vertex] (d3) at  (23, -1) {$8$};

\draw[line width=2pt]  (a2) to (b2);
\draw[line width=2pt]  (c2) to (d2);
\draw[line width=2pt]  (c3) to (b3);
\draw[line width=2pt]  (a2) to (a3);
\draw[line width=2pt]  (b2) to (b3);
\draw[line width=2pt]  (c2) to (c3);
\draw[line width=2pt]  (d2) to (d3);
\draw[line width=2pt]  (a3) to [out=-30, in=-150] (d3);

\node[vertex] (a4) at  (28, 2) {$1$};
\node[vertex] (b4) at  (31, 2) {$2$};
\node[vertex] (c4) at  (34, 2) {$3$};
\node[vertex] (d4) at  (37, 2) {$4$};
\node (fake) at (39.5, 2) {};
\node (fake2) at (25.5, -1) {};
\node[vertex] (a5) at  (28, -1) {$5$};
\node[vertex] (b5) at  (31, -1) {$6$};
\node[vertex] (c5) at  (34, -1) {$7$};
\node[vertex] (d5) at  (37, -1) {$8$};

\draw[line width=2pt]  (a4) to (b4);
\draw[line width=2pt]  (c4) to (b4);
\draw[line width=2pt]  (b5) to (c5);
\draw[line width=2pt]  (a4) to (a5);
\draw[line width=2pt]  (c4) to (c5);
\draw[line width=2pt]  (d4) to (d5);

\draw[line width=2pt, dashed]  (d4) to (fake);
\draw[line width=2pt, dashed]  (fake2) to (a5);

\draw[line width=2pt]  (b5) to [out=-40, in=-140] (d5);
\end{tikzpicture}
\end{center}
	\caption{Three types of tours when $n=8:$ a tour that uses only edges of length $1$ and $d$, a tour that uses one edge of length $3$, and a tour that uses one edge of length $2$.  Note that we can ``rotate'' the labels of the each tour (e.g. replacing every vertex $v$ with $v+1 \mod 8$) to attain distinct tours of the same type.  Doing so traces out four distinct tours of the leftmost type, eight distinct tours of the middle type, and eight distinct tours of the rightmost type.}\label{fig:8case}\end{figure}

\begin{figure}[h!]
\begin{center}
\begin{tikzpicture}
\tikzset{vertex/.style = {shape=circle,draw,minimum size=2.5em}}
\tikzset{edge/.style = {->,> = latex'}}
\node[draw=none,minimum size=4cm,regular polygon,regular polygon sides=8] (a) at (0, 0) {};

\draw[line width=2pt] (a.corner 4) to (a.corner 8);
\draw[line width=2pt] (a.corner 1) to (a.corner 5);

\draw[line width=2pt] (a.corner 7) to (a.corner 8);
\draw[line width=2pt] (a.corner 7) to (a.corner 6);
\draw[line width=2pt] (a.corner 5) to (a.corner 6);

\draw[line width=2pt] (a.corner 4) to (a.corner 3);
\draw[line width=2pt] (a.corner 2) to (a.corner 3);
\draw[line width=2pt] (a.corner 2) to (a.corner 1);

\foreach \n [count=\nu from 1, remember=\n as \lastn, evaluate={\nu+\lastn}] in {8, 7, ..., 1} 
\node[vertex, fill=white]at(a.corner \n){$\nu$};

\node[draw=none,minimum size=4cm,regular polygon,regular polygon sides=8] (b)  at (8, 0) {};

\draw[line width=2pt] (b.corner 4) to (b.corner 8);
\draw[line width=2pt] (b.corner 1) to (b.corner 5);

\draw[line width=2pt] (b.corner 7) to (b.corner 8);
\draw[line width=2pt] (b.corner 7) to (b.corner 6);
\draw[line width=2pt] (b.corner 5) to (b.corner 6);

\draw[line width=2pt] (b.corner 4) to (b.corner 3);
\draw[line width=2pt] (b.corner 2) to (b.corner 3);
\draw[line width=2pt] (b.corner 2) to (b.corner 1);

\foreach \n [count=\nu from 1, remember=\n as \lastn, evaluate={\j=int(mod(\nu, 8)+1)}] in {8, 7, ..., 1} 
\node[vertex, fill=white]at(b.corner \n){$\j$};

\end{tikzpicture}
\end{center}
	\caption{One version of of the leftmost tour from Figure \ref{fig:8case} and one rotation, where each vertex label $v$ is replaced with $v+1 \mod 8$.  This corresponds to ``rotating'' the labels 45 degrees counter-clockwise.}\label{fig:exRot}\end{figure}
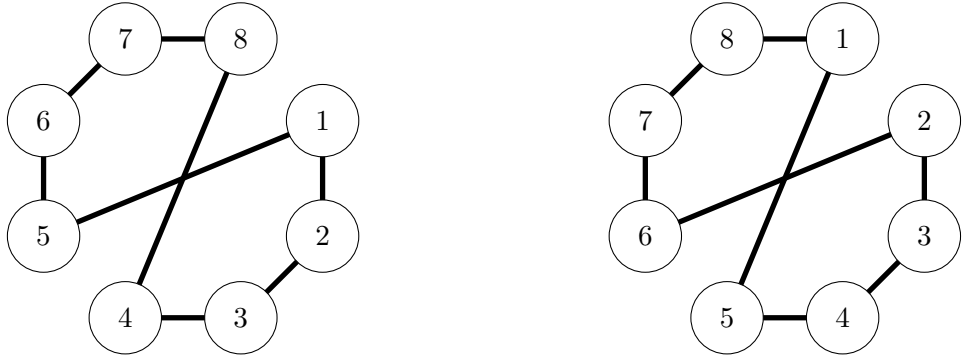

By construction, the circlet inequality is tight for all tours in Figure \ref{fig:8case}.  We can attain $n=8$ versions of the second and third tour type by ``rotating'' the vertex labels (replacing every vertex $v$ with $v+1 \mod 8$ constitutes one rotation); by circulant symmetry, each rotated tour is again tight for the circlet inequality.  We can similarly get $n/2=4$ copies of the first tour (as after four rotations, we return to the original labeling).  In total this gives $$4+2*8 = 20 = \f{8*5}{2}=\f{n(n-3)}{2}$$ tours for which the inequality is tight.

When we generalize this argument, we will again consider $n/2$ tours of the leftmost type (with 2 edges of length $d$, and $n-2$ edges of length $1$).  Then, for each of the $\f{n}{2}-2$ possible values of $k\in\{2, 3, ..., \f{n}{2}-1\}$, we will analgously find a tour using exactly one edge of cost $k$ (and all other edges of length $1$ or $d$). 
\begin{thm}\label{thm:tight}
Let $4| n$.  Then the circlet inequalities are facet defining for $STSP(n)$.
\end{thm}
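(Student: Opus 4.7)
The plan is to apply Theorem 3.7 of Naddef and Rinaldi \cite{Nad92}: since $\dim(STSP(n)) = \frac{n(n-3)}{2}$, it suffices to exhibit $\frac{n(n-3)}{2}$ Hamiltonian cycles that are tight for the circlet inequality and whose edge-incidence vectors are linearly independent. The tours I would use fall into $d-1$ types, distinguished by their single ``extra'' edge-length.

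First, I would take the \emph{baseline} tour $T^{(1)}$ defined by the cycle $1\sim 2\sim \cdots \sim d\sim n\sim (n-1)\sim \cdots \sim (d+1)\sim 1$, which uses $n-2$ edges of length $1$ and $2$ edges of length $d$ at total cost $n-2$. For each $k\in\{2,3,\dots,d-1\}$ I would construct a tour $T^{(k)}$ using exactly one edge of length $k$; the equations $\sum c_i t_i = n-2$ and $\sum t_i = n$ then force $(k+1)$ edges of length $d$ and $(n-2-k)$ edges of length $1$ when $k$ is odd, and $(d+1-k)$ edges of length $d$ and $(d-2+k)$ edges of length $1$ when $k$ is even. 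Such tours can be built by threading alternating ``vertical'' length-$d$ edges between the top row $\{1,\dots,d\}$ and the bottom row $\{d+1,\dots,n\}$ and closing with a single length-$k$ chord; Figure \ref{fig:tight} illustrates the cases $k=d-1$ and $k=d-2$, and Figure \ref{fig:8case} illustrates all three types for $n=8$.

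I would then generate the orbit of each $T^{(k)}$ under the $n$ cyclic relabelings $v\mapsto v+j\pmod{n}$. The baseline is fixed by the rotation $v\mapsto v+d$, so its orbit has size $n/2$. For each $k \in \{2,\dots,d-1\}$, the unique length-$k$ edge of $T^{(k)}$ cannot be fixed by any nontrivial rotation (this would require $2k\equiv 0\pmod{n}$, forcing $k=d$), so the orbit has size $n$. The total count is
\begin{equation*}
\frac{n}{2} + (d-2)\cdot n \;=\; \frac{n(n-3)}{2} = \dim(STSP(n)),
\end{equation*}
as required.

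The main obstacle is proving linear independence, which I would handle in two stages. Suppose $\sum_{k,j}\alpha_{k,j}\,\chi_{T^{(k)}_j}=0$. For each $k \in \{2,\dots,d-1\}$ and each length-$k$ edge $e$, exactly one rotation of $T^{(k)}$ contains $e$, and no other tour in the collection does, because the baseline uses only lengths in $\{1,d\}$ and each $T^{(k')}$ with $k'\ne k$ uses only lengths in $\{1,k',d\}$. Reading off the coordinate $x_e$ forces the corresponding $\alpha_{k,j}=0$; varying $e$ over all length-$k$ edges and all such $k$ eliminates every non-baseline coefficient. It then remains to show that the $d$ baseline coefficients $\alpha_{1,0},\dots,\alpha_{1,d-1}$ vanish. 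For this, note that each length-$1$ edge $e^1_i = \{i,i+1\}$ is omitted by exactly one baseline rotation (the two omitted positions of $T^{(1)}_j$ are $j$ and $j+d$), so the $x_{e^1_i}$-coordinate of $\sum_j \alpha_{1,j}\,\chi_{T^{(1)}_j}$ equals $S - \alpha_{1,j^*(i)}$, where $S=\sum_j \alpha_{1,j}$. Setting every such coordinate to zero yields $\alpha_{1,j} = S$ for every $j$, and summing gives $S = dS$, which forces $S=0$ (as $d\ge 4$) and hence $\alpha_{1,j}=0$ for all $j$.
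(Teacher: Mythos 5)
Your proposal is correct and follows essentially the same route as the paper's proof of Theorem \ref{thm:tight}: the same two families of tight tours (the length-$\{1,d\}$ baseline and, for each remaining length, a tour carrying a single chord of that length), the same orbit counts $n/2$ and $n$ summing to $n(n-3)/2$, and the same first stage of the independence argument, in which each non-baseline tour is eliminated by its unique chord. The one place you genuinely diverge is the second stage. The paper disposes of the baseline family by asserting that each such tour contains a length-$d$ edge used by no other remaining tour; this is not literally true, since the rotation by $m$ uses the length-$d$ edges $\{m,m+d\}$ and $\{m+1,m+1+d\}$, so consecutive rotations share a length-$d$ edge, and because $d$ is even the baseline vectors restricted to the length-$d$ coordinates are in fact dependent (the alternating sum $\sum_{m}(-1)^m(e_m+e_{m+1})$ vanishes around the $d$-cycle). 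Your argument via the length-$1$ coordinates --- each length-$1$ edge is omitted by exactly one baseline rotation, forcing every $\alpha_{1,j}$ to equal the common sum $S$ and hence $S=dS=0$ --- is airtight and repairs exactly this step. The only item you leave underspecified is the existence of the tours $T^{(k)}$: you correctly derive the forced edge counts for each parity of $k$, but only gesture at ``threading vertical length-$d$ edges and closing with a chord,'' whereas the paper gives the explicit constructions (Figures \ref{fig:keven} and \ref{fig:kodd}), handling odd and even $k$ separately; you would need to write these out to make the count of $n$ tours per length rigorous.
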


\begin{proof}
We follow the intuition outlined above:
\begin{itemize}
\item There are $d=\f{n}{2}$ tours with two edges of length $d$ and $n-2$ edges of length $1$: One such tour uses the two length-$d$ edges $\{1, d+1\}$ and $\{d, n\},$ and connects them via $\{1, 2\}, \{2, 3\}, ...,\{d-1, d\}$ and $\{d+1, d+2\}, \{d+2, d+3\}, ..., \{n-1, n\}.$
 This tour costs $$0(2)+(n-2)=n-2,$$ and is tight for the circlet inequality.    We then can rotate this tour by adding a constant $m$ to the label of every vertex, for $m=1, 2, ..., d-1$, and, by circulant symmetry, will attain another tight tour.
\item For $k\in \{3, ..., d\}$, we find $n$ tours that each has a unique edge of cost $k-1$.  These will, moreover, have $k$ edges of length $d$ and $n-k-1$ edges of length 1.  Their total cost will be $$(k-1)+0(k)+(n-k-1)=n-2,$$ and they are indeed tight for the circlet inequality.  To do so, we follow the previous process: we start with a tour with a single edge of cost $k-1,$ and rotate it.  For these tours, however, we are able to rotate them by adding any constant $m=1, 2, ..., n-1$ to the label of every vertex (as we trace every edge of cost $k-1$).
\begin{itemize}
\item When $k$ is even, we use the tour type shown in Figure \ref{fig:keven}. Note that, by construction, it uses $k$ edges of length $d$ and $n-k-1$ edges of length 1.  The remaining edge is of length $k-1$ and, since $k$ is even, it is of cost $k-1$.
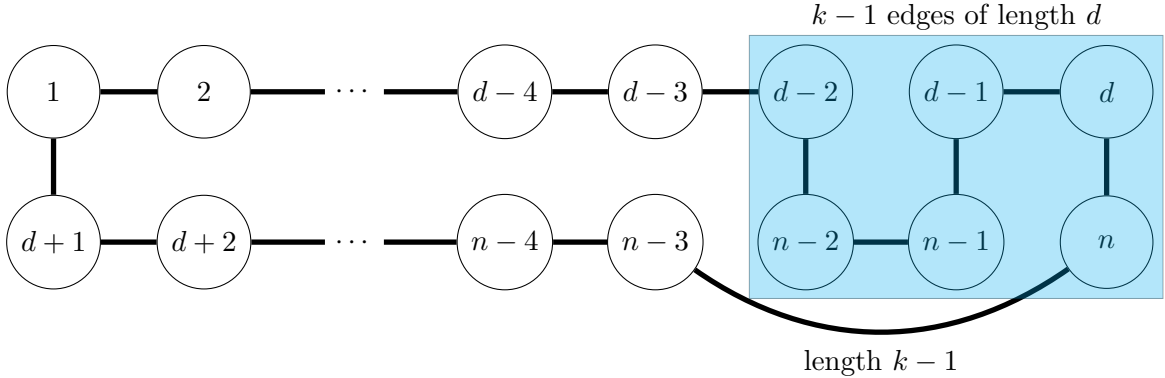
\begin{figure}[h!]
	\begin{center}
\begin{tikzpicture}[scale=0.5]
\tikzset{vertex/.style = {shape=circle,draw,minimum size=3.2em}}
\tikzset{edge/.style = {->,> = latex'}}
\tikzstyle{decision} = [diamond, draw, text badly centered, inner sep=3pt]
\tikzstyle{sq} = [regular polygon,regular polygon sides=4, draw, text badly centered, inner sep=3pt]
\node[vertex] (a) at  (0, 2) {$1$};
\node[vertex] (b) at  (4, 2) {$2$};
\node (c) at  (8, 2) {$\cdots$};
\node[vertex] (d) at  (12, 2) {$d-4$};
\node[vertex] (e) at  (16, 2) {$d-3$};
\node[vertex] (f) at  (20, 2) {$d-2$};
\node[vertex] (g) at  (24, 2) {$d-1$};
\node[vertex] (h) at  (28, 2) {$d$};

\node[vertex] (a1) at  (0, -2) {$d+1$};
\node[vertex] (b1) at  (4, -2) {$d+2$};
\node (c1) at  (8, -2) {$\cdots$};
\node[vertex] (d1) at  (12, -2) {$n-4$};
\node[vertex] (e1) at  (16, -2) {$n-3$};
\node[vertex] (f1) at  (20, -2) {$n-2$};
\node[vertex] (g1) at  (24, -2) {$n-1$};
\node[vertex] (h1) at  (28, -2) {$n$};

\draw[line width=2pt]  (a) -- (b);
\draw[line width=2pt]  (b) -- (c);
\draw[line width=2pt]  (d) -- (c);
\draw[line width=2pt]  (d) -- (e);
\draw[line width=2pt]  (e) -- (f);
\draw[line width=2pt]  (g) -- (h);

\draw[line width=2pt]  (a1) -- (b1);
\draw[line width=2pt]  (b1) -- (c1);
\draw[line width=2pt]  (d1) -- (c1);
\draw[line width=2pt]  (d1) -- (e1);
\draw[line width=2pt]  (f1) -- (g1);

\draw[line width=2pt]  (a) -- (a1);
\draw[line width=2pt]  (f1) -- (f);
\draw[line width=2pt]  (g) -- (g1);
\draw[line width=2pt]  (h) -- (h1);

\draw[line width=2pt]  (e1) to [out=-35, in=-145] (h1);

\draw[fill=cyan, opacity=0.3] ($(f)+(-1.5, 1.5)$)  rectangle ($(h1)-(-1.5, 1.5)$);

\node (fake) at (24, 4) {$k-1$ edges of length $d$};
\node (fake) at (22, -5.2) {length $k-1$};
\end{tikzpicture}
	\end{center}
	\caption{Tight tours using exactly one edge of length $k$ when $k$ is even.  In this example, $k=4$.}\label{fig:keven}\end{figure}

\item When $k$ is odd, we use the tour type shown in Figure \ref{fig:kodd}. Again, by construction, it uses $k$ edges of length $d$ and $n-k-1$ edges of length 1.  The remaining edge is of length $d-(k-1)$ and, since $k$ is odd, it is of cost $d-\left(d-\left(k-1\right)\right)=k-1.$
\begin{figure}[h!]
\begin{center}

\begin{tikzpicture}[scale=0.5]
\tikzset{vertex/.style = {shape=circle,draw,minimum size=3.2em}}
\tikzset{edge/.style = {->,> = latex'}}
\tikzstyle{decision} = [diamond, draw, text badly centered, inner sep=3pt]
\tikzstyle{sq} = [regular polygon,regular polygon sides=4, draw, text badly centered, inner sep=3pt]
\node[vertex] (a) at  (0, 2) {$1$};
\node[vertex] (b) at  (4, 2) {$2$};
\node (c) at  (8, 2) {$\cdots$};
\node[vertex] (d) at  (12, 2) {$d-4$};
\node[vertex] (e) at  (16, 2) {$d-3$};
\node[vertex] (f) at  (20, 2) {$d-2$};
\node[vertex] (g) at  (24, 2) {$d-1$};
\node[vertex] (h) at  (28, 2) {$d$};

\node[vertex] (a1) at  (0, -2) {$d+1$};
\node[vertex] (b1) at  (4, -2) {$d+2$};
\node (c1) at  (8, -2) {$\cdots$};
\node[vertex] (d1) at  (12, -2) {$n-4$};
\node[vertex] (e1) at  (16, -2) {$n-3$};
\node[vertex] (f1) at  (20, -2) {$n-2$};
\node[vertex] (g1) at  (24, -2) {$n-1$};
\node[vertex] (h1) at  (28, -2) {$n$};

\draw[line width=2pt]  (a) -- (b);
\draw[line width=2pt]  (b) -- (c);
\draw[line width=2pt]  (d) -- (c);
\draw[line width=2pt]  (e) -- (d);
\draw[line width=2pt]  (f) -- (g);

\draw[line width=2pt]  (a1) -- (b1);
\draw[line width=2pt]  (b1) -- (c1);
\draw[line width=2pt]  (d1) -- (c1);
\draw[line width=2pt]  (f1) -- (e1);
\draw[line width=2pt]  (g1) -- (h1);

\draw[line width=2pt]  (a) -- (a1);
\draw[line width=2pt]  (e1) -- (e);
\draw[line width=2pt]  (f1) -- (f);
\draw[line width=2pt]  (g) -- (g1);
\draw[line width=2pt]  (h) -- (h1);

\draw[line width=2pt]  (d1) to [out=-35, in=-125] (31.7, -1.5)[out = 67, in=0]  to  (h);
\draw[fill=cyan, opacity=0.3] ($(e)+(-1.5, 1.5)$)  rectangle ($(h1)-(-1.5, 1.5)$);

\node (fake) at (22, 4) {$k-1$ edges of length $d$};
\node (fake) at (22, -6.5) {length $d-\left(k-1\right)$};
\end{tikzpicture}
\end{center}
	\caption{Tight tours using exactly one edge of length $k$ when $k$ is odd.  In the figure, $k=5$.}\label{fig:kodd}\end{figure}
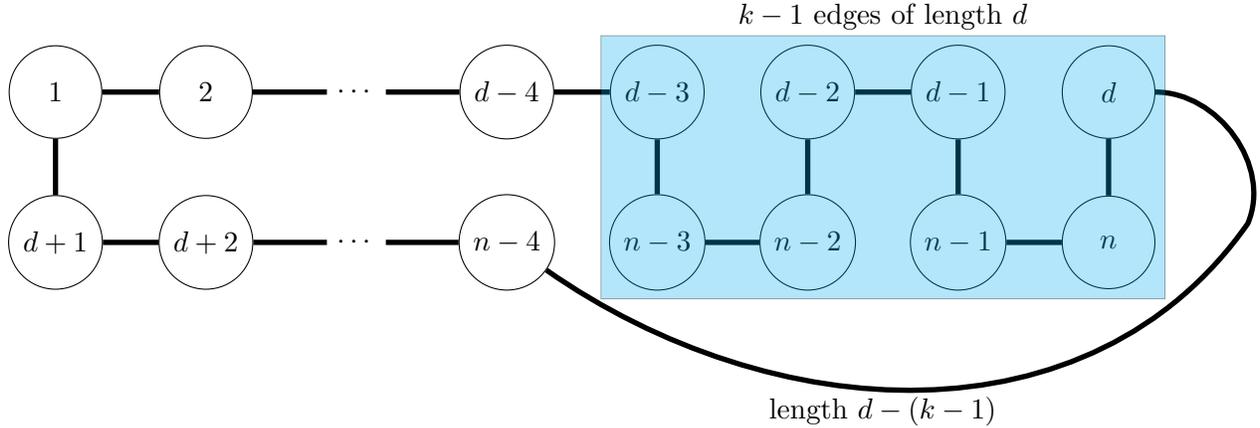
\end{itemize}
\end{itemize}
In total, these give $$\f{n}{2} + \left(\f{n}{2}-2\right)n = \f{n}{2} + \f{n(n-4)}{2} = \f{n(n-3)}{2}$$ tight tours.  Their characteristic vectors are linearly independent for the  same reason as in the $n=8$ case. All of the tours with a unique edge of cost $k-1$  cannot be used in non-trivial linear dependency: if edge $e$ is used uniquely in a given tour $H$, then no other characteristic vector places and weight on edge $e$ and so the coefficient of $\chi_H$ must be zero in any non-trivial linear dependency.   From there, the remaining tours with two edges of length $d$ and $n-2$ edges of length $1$ can also not be used in any non-trivial linear dependency, as each contains a unique edge $e$ of length $d$ not used by any of the remaining tours.  Hence  any linear dependency must be trivial. \hfill
\end{proof}

\section{The Strength of the Circlet Inequalities}\label{sec:Strength}
Goemans \cite{Goe95b} provides a way of evaluating the strength of facet-defining inequalities for the TSP.  This notion is with respect to the \emph{Graphic Traveling Salesman Problem} $GTSP(n)$.  Whereas $STSP(n)$ is the convex hull of incidence vectors of Hamiltonian cycles on $K_n$, $GTSP(n)$ is  convex hull of incidence vectors of Eulerian sub(multi)graphs on $K_n$: vectors $\chi_S\in \N^E$ where $S$ is a multiset of edges in $K_n$ such that the multigraph $([n], S)$ is connected and every vertex has even degree.  Unlike $STSP(n)$, $GTSP(n)$ is full dimensional.  Similarly, $\chi_H \in GTSP(n)$ for any Hamiltonian cycle $H$ on $K_n.$  

Goemans' \cite{Goe95b} definition of strength of a TSP inequality is relative to the  prototypical TSP relaxation, the subtour LP, and whose feasible region we abbreviate as $SP(n)$.  Theorem 2.11 in Goemans \cite{Goe95b} shows that any nontrivial inequality in \emph{tight triangular form} that is facet-defining for $STSP(n)$ defines a facet of $GTSP(n)$. Given such an inequality $ax\geq b$, its \emph{strength} relative to the subtour elimination polyhedron $SP(n)$ is $$\f{b}{\min\{ax:x\in SP(n)\}}.$$  

To compute the strength of the circlet inequality, we must first convert it to tight triangular form.  An inequality $fx\geq f_0$ is in tight triangular form if:
\begin{itemize}
\item $f_{ij} + f_{jk} \geq f_{ik}$ for all distinct triples $i, j, k$, and
\item for all $j\in V$, there exist some $i, k \in V\backslash\{j\}$ such that $f_{i, j}+f_{j, k}=f_{i, k}.$  
\end{itemize}
See Section 2 of  Goemans \cite{Goe95b}.  One converts an inequality to tight triangular form by adding/substracting multiples of the degree constraints. 

In tight triangular form, the circlet inequalities can be stated as $$fx\geq \f{n^2}{2}-n-2, \hspace{5mm} f_e=\begin{cases} d-2+\ell_e, & \ell_e \text{ odd} \\ d-2+(d-\ell_e), & \ell_e \text{ even.} \end{cases}$$
We can rewrite this in the form of Inequality (\ref{eq:main}), as
\begin{equation}\label{eq:TTF}\sum_{i=1}^d f_i t_i \geq \f{n^2}{2}-n-2, \hspace{5mm} f_i=\begin{cases} d-2+i, & i \text{ odd} \\ d-2+(d-i), & i \text{ even.} \end{cases} \end{equation}
These are obtained from Inequality (\ref{eq:main}) by adding $\f{1}{2}(d-2)$ copies of each degree constraint: For any solution $\chi_H \in STSP(n),$ we have that $\sum_{e \in \delta(v)} \left(\chi_H\right)_e = 2$ for any $v\in V$ by the degree constraints.  Every edge $e$ is incident to exactly $2$ vertices, so by adding the degree constraints over all $v\in V$, we obtain $$2\sum_{e\in E} \left(\chi_H\right)_e = 2n.$$  Multiplying this by $\f{1}{2}(d-2)$ yields that $$(d-2)\sum_{e\in E} \left(\chi_H\right)_e = n(d-2).$$ Since this equality is satisfied by every $\chi_H \in STSP(n),$ we can add it to Inequality (\ref{eq:main}).  Doing so yields Inequality (\ref{eq:TTF}): we add $d-2$ to the coefficient of every edge, and add $n(d-2)$ to the right hand side: $$n-2+n(d-2)=n-2+\f{n}{2}(n-4) = \f{2n-4+n(n-4)}{2}=\f{n^2-2n-4}{2}=\f{n^2}{2}-n-2.$$  Hence,  Inequality (\ref{eq:TTF}) remains valid and facet defining.  It remains to show that  Inequality (\ref{eq:TTF}) is in tight triangular form.

\begin{lm}
Inequality (\ref{eq:TTF}) is in tight triangular form.
\end{lm}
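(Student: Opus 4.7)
The plan is to verify the two requirements for tight triangular form separately: (i) the triangle inequality $f_{ij}+f_{jk}\geq f_{ik}$ for all distinct $i,j,k$, and (ii) for every vertex $j$, the existence of $i,k$ achieving equality. Because $f_e$ depends only on the edge length $\ell_e$, both conditions reduce to statements about triples $(a,b,c)\in\{1,\dots,d\}^3$ of edge lengths realized by some triangle in $K_n$.

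First I would characterize the realizable triples. Three points $i,j,k$ on the cycle $\Z_n$ split the cycle into directed arcs of lengths summing to $n$, and each $\ell$ is the minimum of the corresponding arc and its complement. Enumerating sign choices shows that the realizable triples are exactly those satisfying either (a) a permutation of $a+b=c$ with $a+b\leq d$, or (b) $a+b+c=n=2d$.

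Next I would verify $f_a+f_b\geq f_c$ in each case by short casework on the parity pattern, exploiting that the piecewise definition of $f_i$ only branches on parity. In case (a) the three sub-cases ($a,b$ both odd, both even, or mixed) reduce respectively to differences $f_a+f_b-f_c$ equal to $2c-2$, $2d-2$, and $2d-2-2b$, each of which is manifestly nonnegative (in the mixed case, $a+b\leq d$ together with $b$ even forces $b\leq d-2$). Case (b) forces an even number of odd edges, and a parallel computation yields differences of the form $2c-2$ (all even) or $2b-2$, $2a-2$, $2d-2$ (two odd, depending on which edge plays the role of $c$). Tracking the equality cases shows that the only tight triples are the permutations of $(1,d-1,d)$, with witnessing equality
\[
f_1+f_d=(d-1)+(d-2)=2d-3=f_{d-1}.
\]

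Finally, to produce the required witness at each vertex, for any $j\in[n]$ I would take $i=j+1$ and $k=j+d$ (indices mod $n$), giving $\ell_{ji}=1$, $\ell_{jk}=d$, and $\ell_{ik}=d-1$; hence $f_{ji}+f_{jk}=f_1+f_d=f_{d-1}=f_{ik}$, as required. The main obstacle is bookkeeping through the parity sub-cases of the verification step so that neither the piecewise definition of $f_i$ nor the sign conventions in $\ell$ get mishandled; once the cases are laid out, each inequality reduces to one-line arithmetic.
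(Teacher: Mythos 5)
Your proposal is correct, but it takes a genuinely different and considerably heavier route to the triangle-inequality half than the paper does. You first characterize which triples of edge lengths $(a,b,c)$ are realizable by a triangle in $K_n$ (a permutation of $a+b=c$ with $c\leq d$, or $a+b+c=n$) and then grind through parity casework on the piecewise definition of $f$; the paper instead observes that $d-2\leq f_e\leq d-2+(d-1)$ for every edge, so $f_{ij}+f_{jk}\geq f_{ik}$ automatically unless $f_{ij}=f_{jk}=d-2$, and that this extreme value is attained only by edges of length $d$, which forces $i=k$ --- a two-line argument with no enumeration of realizable triples. What your approach buys is a complete classification of the tight triangles: you show the only equality cases are permutations of $(1,d-1,d)$, which both supplies the witness needed for part (ii) (your choice $i=j+1$, $k=j+d$ is, up to the circulant relabeling, exactly the paper's witness $f_{1,d+1}+f_{d+1,d}=f_{1,d}$) and gives extra structural information the paper's bound argument does not. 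Two small bookkeeping points to tidy if you write this up: in your case (a) you list only the differences $f_a+f_b-f_c$ with $c=a+b$ playing the role of the target, but tight triangular form requires checking all three orientations of each triangle (the other orientations are where the tight triple $(1,d-1,d)$ can also surface, via $f_d+f_1\geq f_{d-1}$ with $1+(d-1)=d$); and you should note explicitly that $d$ is even (since $4\mid n$), which is what makes $f_d=d-2$ the global minimum and $d-1$ odd in the equality computation.
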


\begin{proof}
First, we argue that  $f_{ij} + f_{jk} \geq f_{ik}$ for all distinct triples $i, j, k$.  Note that for any edge $e$, $d-2\leq f_e \leq d-2+(d-1).$  If either $f_{ij} \geq d-1$ or $f_{jk}\geq d-1$ (or both) we have that $$f_{ik} \leq d-2 + (d-1) \leq f_{ij}+f_{jk}.$$  Hence the $f_{ij} + f_{jk} \geq f_{ik}$, except possibly in the case where $f_{ij}=f_{jk}=d-2.$  This case, however, requires that both $\{i, j\}$ and $\{j, k\}$ be edges of length $d=n/2$, so that $i$ and $k$ are not distinct.  Thus   $f_{ij} + f_{jk} \geq f_{ik}.$

We must also show that, for each $j\in V$, there exist some $i, k \in V\backslash\{j\}$ such that $f_{i, j}+f_{j, k}=f_{i, k}.$  By circulant symmetry, if this holds for some $j\in V$, it holds for all $j\in V$.  Without loss of generality, take $j=d+1.$  Then we take $i=1$ and $k=d$, so that $\{1, d\}$ is an edge of length $d-1$:
$$f_{i, k} = f_{1, d} = d-2+(d-1) = f_{1, d+1} + f_{d+1, d}=f_{i, j}+f_{j, k}.$$ 
Thus Inequality (\ref{eq:TTF}) is in tight triangular form.\hfill
\end{proof}

We can now readily compute the strength of our inequality.  
\begin{thm}
The strength of  Inequality (\ref{eq:TTF}) is $$\f{n^2-2n-4}{n^2-3n}\leq \f{11}{10}.$$  It is equal to $\f{11}{10}$ when $n=8.$
\end{thm}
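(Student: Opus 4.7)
The plan is to directly compute $\min\{fx : x \in SP(n)\}$ and then take the ratio with $b = (n^2-2n-4)/2$.

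\textbf{Step 1: Identify the smallest coefficients.} Since $d=n/2$ is even, the list of coefficients $f_i$ splits by parity: for $i$ odd, $f_i = d-2+i \geq d-1$, while for $i$ even, $f_i = 2d-2-i$, which ranges down to $f_d = d-2$ and is otherwise at least $d$ for $i < d$. Hence $f_d = d-2$ is the unique minimum coefficient, and $f_i \geq d-1$ for every $i \neq d$.

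\textbf{Step 2: Lower bound the LP.} For any $x \in SP(n)$, summing the degree constraints yields $\sum_{i=1}^d t_i = n$. Because the $d$ edges of length $d$ form a perfect matching and each satisfies $x_e \leq 1$, we have $t_d \leq d$. Combining the two parts,
\begin{align*}
\sum_{i=1}^d f_i t_i \;&\geq\; (d-1)\!\!\sum_{i \neq d} t_i + (d-2)\,t_d \\
&=\; (d-1)(n - t_d) + (d-2)\,t_d \\
&=\; (d-1)n - t_d \;\geq\; (d-1)n - d \;=\; \frac{n^2 - 3n}{2}.
\end{align*}

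\textbf{Step 3: Match the bound.} The canonical subtour-LP solution from Figure~\ref{fig:intGap} (weight $1/2$ on every length-$1$ edge, weight $1$ on every length-$d$ edge) is feasible for $SP(n)$, as noted in Section~\ref{sec:back}, and has $t_1 = d$, $t_d = d$. Its $f$-cost is $(d-1)d + (d-2)d = d(2d-3) = (n^2-3n)/2$, so the lower bound is tight. Therefore
$$\frac{b}{\min\{fx : x \in SP(n)\}} \;=\; \frac{(n^2-2n-4)/2}{(n^2-3n)/2} \;=\; \frac{n^2-2n-4}{n^2-3n}.$$

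\textbf{Step 4: Bound by $11/10$.} Cross-multiplying, $\frac{n^2-2n-4}{n^2-3n} \leq \frac{11}{10}$ is equivalent to $n^2 - 13n + 40 = (n-5)(n-8) \geq 0$, which holds for all $n \geq 8$ (equivalently all admissible $n \geq 8$ since $4 \mid n$), with equality iff $n = 8$.

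The main conceptual step is Step 2, and the only mildly delicate point there is observing that both ingredients—$\sum_i t_i = n$ and $t_d \leq d$—follow from constraints of $SP(n)$ alone without invoking any subtour-elimination cut. No other obstacle arises: Step 3 is covered by the earlier discussion of Figure~\ref{fig:intGap}, and Step 4 reduces to a one-line quadratic inequality.
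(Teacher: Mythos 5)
Your proof is correct, and it takes a genuinely different route at the key step. The paper obtains $\min\{fx : x\in SP(n)\} = \tfrac{n}{2}(n-3)$ by citing Theorems 3.1 and 4.1 of Gutekunst and Williamson \cite{Gut19b}, which assert that the minimum is attained at the specific point with weight $\tfrac12$ on length-$1$ edges and weight $1$ on length-$d$ edges; it then just evaluates $f$ at that point. You instead prove the matching lower bound from scratch: since $f_d=d-2$ is the unique minimum coefficient and $f_i\geq d-1$ for all $i\neq d$, the degree constraints (giving $\sum_i t_i=n$) together with $t_d\leq d$ (from $x_e\leq 1$ over the $d$ edges of the length-$d$ perfect matching) immediately yield $fx\geq (d-1)n-d=\tfrac{n^2-3n}{2}$, and the canonical fractional solution certifies tightness. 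This is more self-contained — it never needs to know \emph{where} the LP optimum is attained, only its value, and it uses no subtour-elimination constraints at all — at the cost of re-deriving something the paper outsources. Two minor points: your feasibility citation in Step 3 points at the Section \ref{sec:back} discussion, which is phrased there for $n=2^{k+1}$; the solution is feasible for all $n$ divisible by $4$ (this is what the cited GW theorems cover), and it is worth saying so or giving the one-line cut argument. Also, you explicitly verify the $\tfrac{11}{10}$ bound via $(n-5)(n-8)\geq 0$, which the paper's proof states but does not check — a small but genuine completion.
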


\begin{proof}
By Theorem 2.11 in Goemans \cite{Goe95b}, its strength relative to the subtour elimination polyhedron $SP(n)$ is $$\f{\f{n^2}{2}-n-2}{\min\{fx:x\in SP(n)\}}.$$  Theorems 3.1 and 4.1 in Gutekunst and Williamson  \cite{Gut19b} show that $\min\{fx:x\in SP(n)\}$ is attained by $$x_e = \begin{cases} \f{1}{2}, & \ell_e = 1 \\ 1,& \ell_e =d \\ 0, & \text{ else}.\end{cases}$$  In the notation of Inequality (\ref{eq:TTF}), we find that $f_1=d-2+1=d-1$ while $f_d=d-2.$  The above solution places a total weight $d$ across all edges of length $d$, a total weight of $d$ across all edges of length $1$, and a total weight of 0 on all other edges.  Hence,
\begin{align*}
\min\{fx:x\in SP(n)\} &= d(d-1)+d(d-2) \\
&= d(2d-3) \\
&= \f{n}{2}(n-3).
\end{align*}
Thus the strength of the circlet inequality is
$$\f{\f{n^2}{2}-n-2}{\min\{fx:x\in SP(n)\}} = \f{\f{n^2}{2}-n-2}{\f{n^2}{2}-\f{3n}{2}}= \f{n^2-2n-4}{n^2-3n}.$$  \hfill
\end{proof}

We note that the circlet inequality appears to be marginally stronger than that of the crown inequality of  Naddef and Rinaldi \cite{Nad92}, which was also motivated by the subtour LP solution placing $1/2$ weight on every length-1 edge and $1$ weight on every length-$d$ edge.  The crown inequality has strength $$\f{3d(\f{d}{2}-1)-1}{3d(\f{d}{2}-1)-\f{d}{2}} = \f{3n(n-4)-8}{3n(n-4)-2n} =\f{3n^2-12n-8}{3n^2-14n}\leq \f{11}{10}.$$  The bound of $\f{11}{10}$ is also attained when $n=8$ by the crown inequality; otherwise ours is marginally stronger.

\section{Conclusions}

The main results of this paper introduce a new facet-defining inequality for the TSP.  This inequality is comparable to the crown inequalities Naddef and Rinaldi \cite{Nad92} in terms of the standard definition of strength.  We note, however, that this standard notation of strength is not as applicable to Circulant TSP.  If edge costs are metric, then a minimum-cost Hamiltonian cycle costs exactly the same as a minimum-cost Eulerian sub(multi)graph: Since a Hamiltonian cycle is itself Eulerian, 
$$\text{Cost of cheapest Eulerian sub(multi)graph } \leq \text{Cost of minimum-cost Hamiltonian cycle}.$$
When edge costs are metric, any Eulerian sub(multigraph) can, however, also be shortcut (see  Section 2.4 of Williamson and Shmoys \cite{DDBook} for details of shortcutting) to obtain a Hamiltonian cycle of no greater cost.  Hence:
$$\text{Cost of cheapest Eulerian sub(multi)graph } \geq \text{Cost of minimum-cost Hamiltonian cycle}.$$
On typical metric instances, then, optimizing over $STSP(n)$ and optimizing over $GTSP(n)$ yields the same solution.  That is not the case with circulant instances.

For example, consider the Circulant TSP instances motivating Inequality (\ref{eq:main}): instances where $4|n,$ edges of length 1 cost 1, and edges of length $d$ cost 0.  As argued in Gutekunst and Williamson \cite{Gut19b}, the optimal TSP solution (i.e., the minimum cost Hamiltonian cycle) uses 2 length-$d$ edges and hence costs $n-2.$  In contrast, Figure \ref{fig:euler} shows a minimum-cost Eulerian sub(multi)graph of cost $d=n/2.$  Hence, on this circulant instance, optimizing over $GTSP(n)$ instead of $STSP(n)$ is nearly a factor of 2 off. 

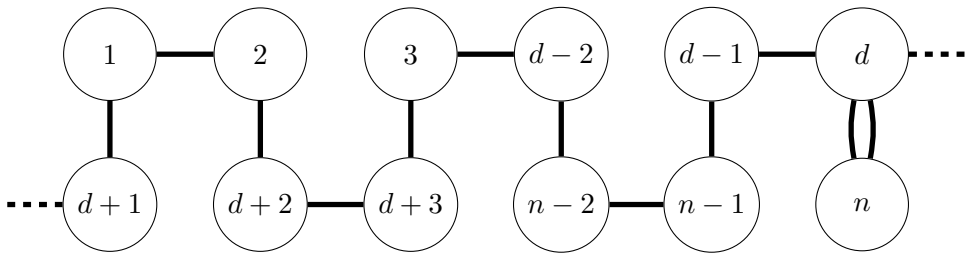
\begin{figure}[h!]
	\begin{center}

\begin{tikzpicture}[scale=0.5]
\tikzset{vertex/.style = {shape=circle,draw,minimum size=3.2em}}
\tikzset{edge/.style = {->,> = latex'}}
\tikzstyle{decision} = [diamond, draw, text badly centered, inner sep=3pt]
\tikzstyle{sq} = [regular polygon,regular polygon sides=4, draw, text badly centered, inner sep=3pt]
\node[vertex] (a) at  (0, 2) {$1$};
\node[vertex] (b) at  (4, 2) {$2$};
\node[vertex] (c) at  (8, 2) {$3$};
\node[vertex] (d) at  (12, 2) {$d-2$};
\node[vertex] (e) at  (16, 2) {$d-1$};
\node[vertex] (f) at  (20, 2) {$d$};
\node (h) at  (23, 2) {};

\node (h1) at  (-3, -2) {};
\node[vertex] (a1) at  (0, -2) {$d+1$};
\node[vertex] (b1) at  (4, -2) {$d+2$};
\node[vertex] (c1) at  (8, -2) {$d+3$};
\node[vertex] (d1) at  (12, -2) {$n-2$};
\node[vertex] (e1) at  (16, -2) {$n-1$};
\node[vertex] (f1) at  (20, -2) {$n$};

\draw[line width=2pt]  (a) -- (b);
\draw[line width=2pt]  (d) -- (c);
\draw[line width=2pt]  (e) -- (f);

\draw[line width=2pt]  (b1) -- (c1);
\draw[line width=2pt]  (d1) -- (e1);

\draw[line width=2pt]  (a) -- (a1);
\draw[line width=2pt]  (b) -- (b1);
\draw[line width=2pt]  (c) -- (c1);
\draw[line width=2pt]  (d) -- (d1);
\draw[line width=2pt]  (e) -- (e1);

\draw[line width=2pt]  (f) to [out=-100, in=100] (f1);
\draw[line width=2pt]  (f) to [out=-80, in=80] (f1);

\draw[line width=2pt, dashed]  (f) -- (h);
\draw[line width=2pt, dashed]  (h1) -- (a1);
\end{tikzpicture}
	\end{center}
	\caption{A minimum-cost Eulerian sub(multi)graph when length-1 edges cost 1, length-$d$ edges cost 0, and all other edge costs are arbitrarily large.  The dashed edge ``wraps around,'' connecting $d$ to $d+1$.}\label{fig:euler}\end{figure}

It is this discrepancy between $STSP(n)$ and $GTSP(n)$ on circulant instances that lends to the circlet inequality's weak strength: Circulant TSP is primarily concerned with understanding what combinations of edge lengths lead to a Hamiltonian tour, and shortcutting an Eulerian sub(multi)graph fundamentally changes edge lengths.  

We end this paper by asking three questions. First, given a solution to the subtour LP, can we efficiently determine whether or not it violates a circlet inequality (and if so, the labelling of the nodes that gives rise to the violated inequality)?  Second, what is the right analogue of strength for Circulant TSP inequalities?  Finally, the crown inequalities are also circulant.  What other circulant facet-defining inequalities are there for the TSP?  Such inequalities may help define the edge-length polytope of the TSP:
$$EL(n):= \text{conv}\{(t_1, ..., t_d): x\in STSP(n), t_i = \sum_{\{s, t\}\in E: \ell_{s, t} = i} x_{s, t}\}.$$  
 We hope that further polyhedral results on $EL(n)$ might open the door to new TSP results more generally, and bridge connections between combinatorial optimization and number theory.

 \section*{Acknowledgments}
This material is also based upon work supported by the National Science Foundation Graduate 
Research Fellowship Program under Grant No. DGE-1650441, and by National Science Foundation Grant No. CCF-1908517.  Any opinions,
findings, and conclusions or recommendations expressed in this material are those of the
authors and do not necessarily reflect the views of the National Science Foundation.

\bibliography{bibliog} 
\bibliographystyle{abbrv}

\appendix

\section{Casework for Propositions \ref{prop:cont1} and  \ref{prop:cont2}}

Throughout the casework, we refer to the groups indicated in the block picture.
\begin{center}

\begin{tikzpicture}[scale=0.65]
\tikzset{vertex/.style = {shape=circle,draw,minimum size=3.3em}}
\tikzset{edge/.style = {->,> = latex'}}
\tikzstyle{decision} = [diamond, draw, text badly centered, inner sep=3pt]
\tikzstyle{sq} = [regular polygon,regular polygon sides=4, draw, text badly centered, inner sep=3pt]
\node[vertex] (a) at  (0, 2) {$1$};
\node[vertex] (b) at  (3, 2) {$2$};
\node[vertex] (c) at  (6, 2) {$3$};
\node[vertex] (d) at  (9, 2) {$4$};
\node[vertex] (e) at  (12, 2) {$5$};
\node (f) at  (14, 2) {$\cdots$};
\node[vertex] (g) at  (16, 2) {$d$};

\node[vertex] (a1) at  (0, -1) {$d+1$};
\node[vertex] (b1) at  (3, -1) {$d+2$};
\node[vertex] (c1) at  (6, -1) {$d+3$};
\node[vertex] (d1) at  (9, -1) {$d+4$};
\node[vertex] (e1) at  (12, -1) {$d+5$};
\node (f1) at  (14, -1) {$\cdots$};
\node[vertex] (g1) at  (16, -1) {$n$};

\draw[line width=2pt]  (a) to (b);
\draw[line width=2pt]  (b1) to (b);
\draw[line width=2pt]  (a1) to (b1);

\draw[fill=cyan, opacity=0.3] ($(c)+(-1.3, 1.3)$)  rectangle ($(g)-(-1.3, 1.3)$);

\draw[fill=cyan, opacity=0.3] ($(c1)+(-1.3, 1.3)$)  rectangle ($(g1)-(-1.3, 1.3)$);
\end{tikzpicture}
\end{center}
The {\bf top group} consists of vertices $s$ where $3\leq s\leq d,$ while the {\bf bottom group} consists of vertices $s$ where $d+3\leq s\leq n.$

\subsection*{Casework for Proposition \ref{prop:cont1}}

We first consider casework for Proposition \ref{prop:cont1}.  We want to show that $$c_{j, 1}  + c_{d+1, k} - c'_{j', k'} \geq 2,$$ and so we show  $$c_{j, 1}  + c_{d+1, k} - c'_{j', k'} - 2\geq 0$$  To do so, we consider the possible parities of $j$ and $k$ which determine, e.g., if $c_{j, 1}=\ell_{j, 1}$ or $c_{j, 1}=d-\ell_{j, 1}.$

\subsubsection*{Case 1: $j$ and $k$ are both even}
In this case,
\begin{equation}\label{eq:1a}c_{j, 1}  + c_{d+1, k} - c'_{j', k'} - 2 = \ell_{1, j} + \ell_{d+1, k} - (d-2-\ell'_{j', k'})-2.\end{equation}
\begin{itemize}
\item If $j, k$ are in the same group, then $\ell'_{j', k'} = \ell_{j, k}=|j-k|.$  Without loss of generality, suppose that $j, k$ are in the top group.  Then Equation (\ref{eq:1a}) becomes  $$c_{j, 1}  + c_{d+1, k} - c'_{j', k'} - 2 =(j-1)+(d+1-k)-(d-2-|j-k|)-2=j-k+|j-k|\geq 0,$$ since $x+|x|\geq 0$.  (If they were in bottom group, it would be $k-j+ |k-j|\geq 0$.)
\item If $j$ is on top and $k$ is on bottom, then $$\ell'_{j', k'} = \ell_{j, k}-2 = \begin{cases} (k-j)-2, & k<d+j \\ n-(k-j)-2, & \text{ else}. \end{cases}$$  Then Equation (\ref{eq:1a}) becomes  
\begin{align*}
c_{j, 1}  + c_{d+1, k} - c'_{j', k'} - 2&= (j-1) + (k-(d+1)) - \left(d-2-\ell'_{j', k'}\right) - 2\\
&= j+k -n-2 + \ell'_{j', k'} \\
&= j+k-n-2 + \begin{cases} (k-j)-2, & k<d+j \\ n-(k-j)-2, & \text{ else} \end{cases}\\
&= \begin{cases} 2k-n-4, & k<d+j \\ 2j-4, & \text{ else.} \end{cases}
\end{align*}
Both are nonnegative as $j\geq 3$ and $k\geq d+3.$

\item If $k$ is on top and $j$ is on bottom, then $$\ell'_{j', k'} = \ell_{j, k}-2 = \begin{cases} (j-k)-2, & j<d+k \\ n-(j-k)-2, & \text{ else}. \end{cases}$$  Then Equation (\ref{eq:1a}) becomes  
\begin{align*}
c_{j, 1}  + c_{d+1, k} - c'_{j', k'} - 2&= (n-j+1) + (d+1-k) - \left(d-2-\ell'_{j', k'}\right) - 2\\
&= n-j-k+2 +\ell'_{j', k'} \\
&= n-j-k+2 + \begin{cases} (j-k)-2, & j<d+k \\ n-(j-k)-2, & \text{ else} \end{cases}\\
&= \begin{cases} n-2k, & j<d+k \\  2n-2j, & \text{ else.} \end{cases}
\end{align*}
Both are nonnegative as $k\leq d$ and $j\leq n.$
\end{itemize}

\subsubsection*{Case 2: $j$ and $k$ are both odd}
In this case,
\begin{equation}\label{eq:1b}c_{j, 1}  + c_{d+1, k} - c'_{j', k'} - 2 = d-\ell_{1, j} +d- \ell_{d+1, k} - (d-2-\ell'_{j', k'})-2 = d-\ell_{1, j}- \ell_{d+1, k}+\ell'_{j', k'}.\end{equation}
\begin{itemize}
\item If $j, k$ are in the same group, then $\ell'_{j', k'} = \ell_{j, k}=|j-k|.$  Without loss of generality, suppose that $j, k$ are in the top group.  Then Equation (\ref{eq:1b}) becomes  $$d-\ell_{1, j}- \ell_{d+1, k}+\ell'_{j', k'} = d-(j-1)-(d+1-k)+|j-k|=k-j+|k-j|\geq 0,$$ since $x+|x|\geq 0$.  (If they were in bottom group, it would be $j-k+ |j-k|\geq 0$.)
\item If $j$ is on top and $k$ is on bottom, then $$\ell'_{j', k'} = \ell_{j, k}-2 = \begin{cases} (k-j)-2, & k<d+j \\ n-(k-j)-2, & \text{ else}. \end{cases}$$   Equation (\ref{eq:1b}) becomes  
\begin{align*}
d-\ell_{1, j}- \ell_{d+1, k}+\ell'_{j', k'}&= d - (j-1) - (k-(d+1)) +\ell'_{j', k'}\\
&= n+2-j-k +\ell'_{j', k'}\\
&= n+2-j-k +\begin{cases} (k-j)-2, & k<d+j \\ n-(k-j)-2, & \text{ else} \end{cases} \\
&= \begin{cases} n-2j, & k<d+j \\ 2n-2k, & \text{ else} \end{cases}
\end{align*}
Both are nonnegative as $j\leq d$ and $k\leq n.$
\item If $k$ is on top and $j$ is on bottom, then $$\ell'_{j', k'} = \ell_{j, k}-2 = \begin{cases} (j-k)-2, & j<d+k \\ n-(j-k)-2, & \text{ else}. \end{cases}$$   Equation (\ref{eq:1b}) becomes  
\begin{align*}
d-\ell_{1, j}- \ell_{d+1, k}+\ell'_{j', k'}&= d - (n-j+1) - (d+1-k) +\ell'_{j', k'}\\
&=  j+k-n-2 +\ell'_{j', k'}\\
&=   j+k-n-2 + \begin{cases} (j-k)-2, & j<d+k \\ n-(j-k)-2, & \text{ else} \end{cases}\\
&=    \begin{cases}  2j - n -4, & j<d+k \\   2k-4, & \text{ else} \end{cases}\\
\end{align*}
Both are nonnegative as $j\geq d+3$ and $k\geq 3.$
\end{itemize}

\subsubsection*{Case 3: $j$ and $k$ have opposite parity}
Without loss of generality, we let $j$ be odd and $k$ be even.  Then
\begin{equation}\label{eq:1c}c_{j, 1}  + c_{d+1, k} - c'_{j', k'} - 2 = d-\ell_{1, j} + \ell_{d+1, k} -\ell'_{j', k'}-2. \end{equation}
\begin{itemize}
\item If $j, k$ are both in the top group, then $\ell'_{j', k'} = \ell_{j, k}=|j-k|.$ 
 Then Equation (\ref{eq:1c}) becomes  $$d-\ell_{1, j} + \ell_{d+1, k} -\ell'_{j', k'}-2 = d-(j-1)+(d+1-k) - |j-k| - 2 = n-j-k-|j-k| = \begin{cases} n-2j, & j>k \\ n-2k,& \text{else} \end{cases} \geq 0,$$ since $j, k \leq d$.  
\item If $j, k$ are both in the bottom group, then $\ell'_{j', k'} = \ell_{j, k}=|j-k|.$ 
 Equation (\ref{eq:1c}) becomes  $$d-\ell_{1, j} + \ell_{d+1, k} -\ell'_{j', k'}-2 =d-(n-j+1)+ (k-(d+1))- |j-k| - 2 = j+k- |j-k|-n - 4= \begin{cases} 2k-n-4, & j>k \\ 2j-n-4,& \text{else,} \end{cases}$$ which is nonnegative since $j, k \geq d+3$.  
\item If $j$ is on top and $k$ is on bottom, then $$\ell'_{j', k'} = \ell_{j, k}-2 = \begin{cases} (k-j)-2, & k<d+j \\ n-(k-j)-2, & \text{ else}. \end{cases}$$   Equation (\ref{eq:1c}) becomes  
\begin{align*}
d-\ell_{1, j} + \ell_{d+1, k} -\ell'_{j', k'}-2 &= d-(j-1)+(k-(d+1))-\ell'_{j', k'}-2 \\
 &= k-j -2 -\ell'_{j', k'} \\
 &= k-j -2 -\begin{cases} (k-j)-2, & k<d+j \\ n-(k-j)-2, & \text{ else}\end{cases} \\
 &= \begin{cases} 0 , & k<d+j \\ 2k-2j  -n& \text{ else}.\end{cases} 
\end{align*}
Both are nonnegative, as in the second case $2k-2j\geq 2d=n.$
\item If $k$ is on top and $j$ is on bottom, then $$\ell'_{j', k'} = \ell_{j, k}-2 = \begin{cases} (j-k)-2, & j<d+k \\ n-(j-k)-2, & \text{ else}. \end{cases}$$   Equation (\ref{eq:1c}) becomes  
\begin{align*}
d-\ell_{1, j} + \ell_{d+1, k} -\ell'_{j', k'}-2 &=d-(n-j+1) + (d+1-k) -\ell'_{j', k'}-2 \\
 &= j-k-2 -\ell'_{j', k'} \\
 &= j-k-2 - \begin{cases} (j-k)-2, & j<d+k \\ n-(j-k)-2, & \text{ else}\end{cases} \\
 &= \begin{cases} 0, & j<d+k \\ 2j-2k-n, & \text{ else}\end{cases} 
\end{align*}
Both are nonnegative, as in the second case $2j-2k\geq 2d=n.$
\end{itemize}  

Up to symmetry, this shows that, in all possible cases, $$c_{j, 1}  + c_{d+1, k} - c'_{j', k'} - 2\geq 0.$$  For completeness, we include formulas for when $j$ is even and $k$ is odd.  These are:
\begin{itemize}
\item If $j, k$ are in the top, $j+k-|j-k|-4.$
\item If $j, k$ are in the bottom, $2n-j-k-|j-k|.$
\item If $j$ is on the top and $k$ is on the bottom, $$ \begin{cases} 0, & k > d+j \\ n+2j-2k, & \text{ else.}\end{cases}$$
\item If $j$ is on the bottom and $k$ is on the top, $$ \begin{cases} 0, & j > d+k \\ n-2j+2k, & \text{ else.}\end{cases}$$
\end{itemize}

\subsection*{Casework for Proposition \ref{prop:cont2}}

The computations for Proposition \ref{prop:cont2} follow as above.  We want to show that $$c_{j, d+2} + c_{d+1, k} - c'_{j', k'}-1 \geq 0,$$ and compute the left-hand side. Here we just write the results, highlighting those where the inequality can be tight.

\subsubsection*{Case 1: $j$ and $k$ are both even}
In this case,  \begin{equation}\label{eq:2a}c_{j, d+2} + c_{d+1, k} - c'_{j', k'}-1 = d-\ell_{j, d+2}+\ell_{d+1, k}-(d-2-\ell'_{j', k'})-1.\end{equation}
\begin{itemize}
\item If $j, k$ are in the top,  $\ell'_{j', k'} = \ell_{j, k}=|j-k|.$ Then Equation (\ref{eq:2a}) becomes:
$$d-\ell_{j, d+2}+\ell_{d+1, k}-(d-2-\ell'_{j', k'})-1 = d-(d+2-j)+(d+1-k)-(d-2-|j-k|)-1=j-k+|j-k|.$$
 {\bf This is zero when $j<k$.}
\item If $j, k$ are in the bottom, then Equation (\ref{eq:2a}) becomes:
$$d-\ell_{j, d+2}+\ell_{d+1, k}-(d-2-\ell'_{j', k'})-1= d-(j-(d+2))+(k-(d+1))-(d-2-|j-k|)-1=2 + |j-k| - (j-k).$$ This is never zero, since $|x|-x \geq 0$.
\item If $j$ is on the top and $k$ is on the bottom, then $$\ell'_{j', k'} = \ell_{j, k}-2 = \begin{cases} (k-j)-2, & k<d+j \\ n-(k-j)-2, & \text{ else}. \end{cases}$$  Equation (\ref{eq:2a}) becomes:
\begin{align*}
d-\ell_{j, d+2}+\ell_{d+1, k}-(d-2-\ell'_{j', k'})-1 &= d-(d+2-j)+(k-(d+1))-(d-2-\ell'_{j', k'})-1 \\
&=  j+k-2-n+\ell'_{j', k'} \\
&= \begin{cases}  j+k-n+ (k-j)-4, & k<d+j \\  j+k-n+n-(k-j)-4, & \text{ else} \end{cases}\\
&=\begin{cases}  2k-n-4, & k<d+j \\ 2j-4, & \text{ else} \end{cases}\\
\end{align*}
  Since $j\geq 3$ and $k\geq d+3,$ these are never zero.
\item If $j$ is on the bottom and $k$ is on the top, then $$\ell'_{j', k'} = \ell_{j, k}-2 = \begin{cases} (j-k)-2, & j<d+k \\ n-(j-k)-2, & \text{ else}. \end{cases}$$  Equation (\ref{eq:2a}) becomes:
\begin{align*}
d-\ell_{j, d+2}+\ell_{d+1, k}-(d-2-\ell'_{j', k'})-1 &= d-(j-(d+2))+(d+1-k)-(d-2-\ell'_{j', k'})-1 \\
&=n -j -k +4  +\ell'_{j', k'} \\
&=  \begin{cases}n  -2k +2, & j<d+k \\ 2n -2j  +2, & \text{ else}. \end{cases}
\end{align*}
Since $j \leq n$ and $k\leq d,$ these are never zero.
\end{itemize}

\subsubsection*{Case 2: $j$ and $k$ are both odd}
In this case,  \begin{equation}\label{eq:2b}c_{j, d+2} + c_{d+1, k} - c'_{j', k'}-1 = \ell_{j, d+2}+(d-\ell_{d+1, k})-(d-2-\ell'_{j', k'})-1 = \ell_{j, d+2}-\ell_{d+1, k}+\ell'_{j', k'}+1
\end{equation}
\begin{itemize}
\item If $j, k$ are in the top, $\ell'_{j', k'} = \ell_{j, k}=|j-k|.$ Then Equation (\ref{eq:2b}) becomes:
$$ \ell_{j, d+2}-\ell_{d+1, k}+\ell'_{j', k'}+1 =(d+2-j)-(d+1-k)+|j-k|+1 =2+|k-j|+(k-j).$$
This is never zero, since $|x|+x\geq 0.$
\item If $j, k$ are in the bottom,   $\ell'_{j', k'} = \ell_{j, k}=|j-k|.$   Then Equation (\ref{eq:2b}) becomes:
$$ \ell_{j, d+2}-\ell_{d+1, k}+\ell'_{j', k'}+1 = (j-(d+2))-(k-(d+1))+|k-j|+1  =|j-k|+j-k.$$ {\bf This is zero when $j<k$.}
\item If $j$ is on the top and $k$ is on the bottom, $$\ell'_{j', k'} = \ell_{j, k}-2 = \begin{cases} (k-j)-2, & k<d+j \\ n-(k-j)-2, & \text{ else}. \end{cases}$$  Then Equation (\ref{eq:2b}) becomes:
\begin{align*}
 \ell_{j, d+2}-\ell_{d+1, k}+\ell'_{j', k'}+1 &= (d+2-j)-(k-(d+1))+\ell'_{j', k'}+1 \\
&=n+4-j-k+\ell'_{j', k'} \\
&=n+4-j-k+\begin{cases} (k-j)-2, & k<d+j \\ n-(k-j)-2, & \text{ else} \end{cases} \\
&= \begin{cases}n+2-2j, & k<d+j \\ 2n+2-2k, & \text{ else}. \end{cases} \\
\end{align*}
Since $k \leq n$ and $j \leq d,$ these are never zero.
\item If $j$ is on the bottom and $k$ is on the top, $$\ell'_{j', k'} = \ell_{j, k}-2 = \begin{cases} (j-k)-2, & j<d+k \\ n-(j-k)-2, & \text{ else}. \end{cases}$$  
Then Equation (\ref{eq:2b}) becomes:
\begin{align*}
 \ell_{j, d+2}-\ell_{d+1, k}+\ell'_{j', k'}+1 &= (j-(d+2))-(d+1-k)+\ell'_{j', k'}+1 \\
&= j + k -n -2 +\ell'_{j', k'} \\
&= j + k -n -2 +  \begin{cases} (j-k)-2, & j<d+k \\ n-(j-k)-2, & \text{ else} \end{cases}  \\
&=  \begin{cases}  2j  -n -4, & j<d+k \\2k - 4, & \text{ else.} \end{cases}  \\
\end{align*}
  Since $k\geq 3$ and $j\geq d+3,$ these are never zero.
\end{itemize}

\subsubsection*{Case 3: $j$ is odd and  $k$ is even}
In this case,  \begin{equation}\label{eq:2c}
c_{j, d+2} + c_{d+1, k} - c'_{j', k'}-1 
= \ell_{j, d+2}+\ell_{d+1, k}-\ell'_{j', k'}-1
\end{equation}

\begin{itemize}
\item If $j, k$ are in the top, then $\ell'_{j', k'} = \ell_{j, k}=|j-k|.$  Equation (\ref{eq:2c}) becomes:
$$\ell_{j, d+2}+\ell_{d+1, k}-\ell'_{j', k'}-1
= (d+2-j) + (d+1-k) - |j-k| - 1  =n+2-j-k-|j-k| = \begin{cases} n+2-2j, & j > k \\n+2-2k, & \text{ else.}\end{cases}
$$  These are  never zero, since $j, k\leq d.$

\item If $j, k$ are in the bottom, then $\ell'_{j', k'} = \ell_{j, k}=|j-k|.$ Equation (\ref{eq:2c}) becomes:
$$ \ell_{j, d+2}+\ell_{d+1, k}-\ell'_{j', k'}-1
= j-(d+2) + k-(d+1) -|j-k|-1
=j+k-|j-k|-n-4 =  \begin{cases} 2k-n-4, & j > k \\2j-n-4, & \text{ else.}\end{cases} $$ These are  never zero, since $j, k\geq d+3.$
\item If $j$ is on the top and $k$ is on the bottom, then $$\ell'_{j', k'} = \ell_{j, k}-2 = \begin{cases} (k-j)-2, & k<d+j \\ n-(k-j)-2, & \text{ else}. \end{cases}$$   Equation (\ref{eq:2c}) becomes:
\begin{align*}
\ell_{j, d+2}+\ell_{d+1, k}-\ell'_{j', k'}-1 &= (d+2-j)+(k-(d+1))- \ell'_{j', k'} -1 \\
 &= k-j- \ell'_{j', k'} \\
 &= k-j-  \begin{cases} (k-j)-2, & k<d+j \\ n-(k-j)-2, & \text{ else} \end{cases}\\
 &=\begin{cases}2 & k<d+j \\ 2k-2j-n+2 & \text{ else.} \end{cases}
\end{align*}
  Since the second case occurs only when $k\geq d+j \implies 2k-2j-n\geq 0,$ these are never zero.
\item If $j$ is on the bottom and $k$ is on the top, then $$\ell'_{j', k'} = \ell_{j, k}-2 = \begin{cases} (j-k)-2, & j<d+k \\ n-(j-k)-2, & \text{ else}.\end{cases} $$ 
Equation (\ref{eq:2c}) becomes:
\begin{align*}
\ell_{j, d+2}+\ell_{d+1, k}-\ell'_{j', k'}-1
 &= (j-(d+2))+(d+1-k)- \ell'_{j', k'} -1 \\
 &= j-k-2- \ell'_{j', k'}  \\
 &= j-k-2-  \begin{cases} (j-k)-2, & j<d+k \\ n-(j-k)-2, & \text{ else}\end{cases}  \\
 &= \begin{cases} 0, & j<d+k \\2j-2k-n, & \text{ else.} \end{cases}
\end{align*}
The second case is not zero whenever  $j>d+k.$  Hence {\bf the equation is zero when $j\leq d+k$.}
\end{itemize}

\subsubsection*{Case 4: $j$ is even and  $k$ is odd}
In this case,  \begin{equation}\label{eq:2d}
c_{j, d+2} + c_{d+1, k} - c'_{j', k'}-1 
=(d- \ell_{j, d+2})+(d-\ell_{d+1, k})-\ell'_{j', k'}-1 
= n- \ell_{j, d+2}-\ell_{d+1, k}-\ell'_{j', k'}-1.
\end{equation}

\begin{itemize}
\item If $j, k$ are in the top, then $\ell'_{j', k'} = \ell_{j, k}=|j-k|.$  Equation (\ref{eq:2d}) becomes:
$$n- \ell_{j, d+2}-\ell_{d+1, k}-\ell'_{j', k'}-1 
= n - (d+2-j)-(d+1-k)-|j-k|-1 
= j+k-|j-k|-4 = \begin{cases} 2k-4, & j > k \\2j-4, & \text{ else.}\end{cases}
$$  These are  never zero, since $j, k\geq 3.$
\item If $j, k$ are in the bottom,  then $\ell'_{j', k'} = \ell_{j, k}=|j-k|.$  Equation (\ref{eq:2d}) becomes:
$$n- \ell_{j, d+2}-\ell_{d+1, k}-\ell'_{j', k'}-1 
= n - (j-(d+2))-(k-(d+1))-|j-k|-1
=  2n+2-j-k-|j-k| $$
and
$$ 2n+2-j-k-|j-k| =\begin{cases}2n+2-2j, & j > k \\2n+2-2k, & \text{ else.}\end{cases}.$$ These are  never zero, since $j, k\leq n.$

\item If $j$ is on the top and $k$ is on the bottom, then $$\ell'_{j', k'} = \ell_{j, k}-2 = \begin{cases} (k-j)-2, & k<d+j \\ n-(k-j)-2, & \text{ else}. \end{cases}$$  Equation (\ref{eq:2d}) becomes:
\begin{align*}
n- \ell_{j, d+2}-\ell_{d+1, k}-\ell'_{j', k'}-1 
&= n - (d+2-j)-(k-(d+1))-\ell'_{j', k'}-1 \\
&= n +j-k-2  -\ell'_{j', k'} \\
&= n +j-k-2  - \begin{cases} (k-j)-2, & k<d+j \\ n-(k-j)-2, & \text{ else} \end{cases}\\
&=  \begin{cases}  n +2j-2k  , & k<d+j \\ 0 , & \text{ else} \end{cases}\\
\end{align*}
 The first case is not zero because $d+j-k>0.$  Hence {\bf the equation is zero when $k\geq d+j$.}

\item If $j$ is on the bottom and $k$ is on the top, then $$\ell'_{j', k'} = \ell_{j, k}-2 = \begin{cases} (j-k)-2, & j<d+k \\ n-(j-k)-2, & \text{ else}.\end{cases} $$ 
Equation (\ref{eq:2d}) becomes:
\begin{align*}
n- \ell_{j, d+2}-\ell_{d+1, k}-\ell'_{j', k'}-1 
&=n - (j-(d+2))-(d+1-k)-\ell'_{j', k'}-1 \\
&=n -j  +k-\ell'_{j', k'} \\
&=n -j  +k-  \begin{cases} (j-k)-2, & j<d+k \\ n-(j-k)-2, & \text{ else}\end{cases} \\
&=\begin{cases} n+2-2j+2k, & j<d+k \\2, & \text{ else.}\end{cases} \\
\end{align*}
 The first case is not zero because $j< d+k$ implies $0< n+2k-2j.$
\end{itemize}

In all cases, $$c_{j, d+2} + c_{d+1, k} - c'_{j', k'}-1 \geq 0,$$ with equality only when:
\begin{itemize}
\item $j, k\leq d$, both $j, k$ even, and $j<k.$
\item $j, k\geq d+3$, both $j, k$ odd, and $j<k.$
\item $j\geq d+3$ and odd, $k\leq d$ and even, and $j\leq d+k.$
\item $j\leq d$ and even, $k\geq d+3$ and odd, and $k\geq d+j.$
\end{itemize}

This means that equality can hold only in those cases mentioned in the proof of Proposition \ref{prop:cont2}.

 \end{document}